\documentclass[twoside,reqno]{amsart} 

\usepackage{xcolor}
\usepackage{amssymb}
\usepackage{tikz}

\newtheorem{theorem}{Theorem}[section]
\newtheorem{lemma}[theorem]{Lemma}
\newtheorem{corollary}[theorem]{Corollary}
\newtheorem{proposition}[theorem]{Proposition}

\theoremstyle{definition}
\newtheorem{definition}[theorem]{Definition}
\newtheorem{remark}[theorem]{Remark}
\newtheorem{example}[theorem]{Example}

\numberwithin{equation}{section}

\DeclareMathOperator{\wind}{wind}
\DeclareMathOperator{\ind}{ind}

\DeclareMathOperator{\diag}{diag}
\DeclareMathOperator{\im}{im}
\DeclareMathOperator{\tr}{trace}

\renewcommand{\Re}{\mbox{\rm Re\,}}

\newcommand{\R}{{\mathbb R}}
\newcommand{\C}{{\mathbb C}}
\newcommand{\Z}{{\mathbb Z}}
\newcommand{\N}{{\mathbb N}}
\newcommand{\T}{{\mathbb T}}

\newcommand{\be}{\begin{equation}}
\newcommand{\ee}{\end{equation}}
\newcommand{\nn}{\nonumber}
\newcommand{\ba}{\begin{array}}
\newcommand{\ea}{\end{array}}

\newcommand{\wh}{\widehat}
\newcommand{\wt}[1]{\widetilde{#1}}
\newcommand{\iv}{^{-1}}
\newcommand{\iy}{\infty}
\newcommand{\NN}{^{N\times N}}

\renewcommand{\kappa}{\varkappa}
\newcommand{\eps}{{\varepsilon}}
\newcommand{\Ga}{\Gamma}

\newcommand{\la}{\lambda}

\newcommand{\cB}{\mathcal{B}}
\newcommand{\cG}{\mathcal{G}}

\newcommand{\cL}{\mathcal{L}}
\newcommand{\cM}{\mathcal{M}}

\newcommand{\ta}{\tilde{a}}
\newcommand{\tb}{\tilde{b}}

\newcommand{\tp}{\tilde{\phi}}

\newcommand{\twotwo}[4]{\left(\begin{array}{cc}#1&#2\\[1ex]#3&#4\end{array}\right)}

\newcommand{\CpwTG}{C^{1+\eps}_{\mathrm{pw}}(\T;\Gamma) }

\parskip = 0.1in
\setlength\parindent{0pt}

\title[Block Toeplitz determinants with piecewise continuous functions]{Asymptotics of block Toeplitz determinants with piecewise continuous symbols}
\author{E. Basor}
\address{American Institute of Mathematics, San Jose, USA}
\email{ebasor@aimath.org}
\author{T. Ehrhardt}
\address{University of California, Santa Cruz, USA}
\email{tehrhard@ucsc.edu}
\author{J. A. Virtanen}
\address{University of Reading, Reading, UK}
\email{j.a.virtanen@reading.ac.uk}

\begin{document}

\begin{abstract}
We determine the asymptotics of the block Toeplitz determinants $\det T_n(\phi)$ as $n\to\infty$ for $N\times N$ matrix-valued piecewise continuous functions $\phi$ with a finitely many jumps under mild additional conditions. In particular, we prove that
$$
	\det T_n(\phi) \sim G^n n^\Omega E\quad {\rm as}\ n\to \infty,
$$
where $G$, $E$, and $\Omega$ are constants that depend on the matrix symbol $\phi$ and are described in our main results. Our approach is based on a new localization theorem for Toeplitz determinants, a new method of computing the Fredholm index of Toeplitz operators with piecewise continuous matrix-valued symbols, and other operator theoretic methods. As an application of our results, we consider piecewise continuous symbols that arise in the study of entanglement entropy in quantum spin chain models.
\end{abstract}

\maketitle
\setcounter{tocdepth}{1}
\tableofcontents

\section{Introduction}
For a matrix-valued  function $\phi \in L^\infty(\T)^{N\times N}$ defined on the unit circle $\T=\{z\in\C\,:\,|z|=1\}$ with Fourier coefficients $\phi_{j}$, define the Toeplitz determinants $D_n[\phi]$ of the finite block Toeplitz matrix $T_n(\phi)$ by
$$
	D_n[\phi] = \det T_n(\phi) = \det (\phi_{j-k})_{j,k=0}^{n-1},  \qquad n\in \N.
$$
The asymptotic behavior of $D_n[\phi]$ as $n\to\infty$ is well understood when $\phi$ is sufficiently nice and it is given by the well-known Szeg\H{o}-Widom limit theorem, see~\cite{BS06, Wi76}. For symbols possessing zeros, certain kinds of singularities, jump discontinuities, or having a nonzero winding number, the large $n$ behavior of Toeplitz determinants is nearly completely understood only when $N=1$ and given by the Fisher-Hartwig asymptotics. For details we refer to \cite{DIK11, E01} and also to \cite{BS99,DIK13} for more general information.

This paper is concerned with the asymptotic behavior of block Toeplitz determinants with Fisher-Hartwig symbols. Specifically we deal with the case of symbols with jump discontinuities. Our approach is based in part on the localization or separation theorem \cite{B1979} which states that when the symbols $\phi$ and $\psi$ do not have common singularities and satisfy certain invertibility and smoothness criteria off the singularites, then
\begin{equation*}\label{e:B}
	\lim_{n\to \infty} \frac{D_n[\phi \psi]}{D_n[\phi] D_n[\psi]}
	=\det\Big( T^{-1}(\phi) T({\phi\psi}) T^{-1}(\psi) \Big)
	 \Big( T^{-1}({\tilde{\phi} })T({ \widetilde {\phi \psi}}) T^{-1}({\tilde{\psi} })\Big).
\end{equation*}
In the above, $T(\phi)$ is the semi-infinite Toeplitz operator defined on $\ell^{2}(\Z_+)^{N}$, $\Z_+=\{0,1,\dots\}$ with matrix entry $\phi_{j-k}$ and $\tilde{\phi}(e^{i\theta}) = \phi(e^{-i \theta}).$
The localization theorem proved useful because if one could find a canonical symbol that possessed one jump singularity and such that the determinant asymptotics were known for the canonical symbol, then the asymptotics could be constructed for an arbitrary symbol with a finite number of jumps by applying it to a pair of symbols with disjoint singularities and then by repeatedly adding another canonical factor. 

It might seem that this idea should easily transform to the matrix-valued symbol  case. However, the localization theorem requires at each step that certain semi-infinite Toeplitz operators be invertible. In the scalar case this is not an issue. This is because in the scalar case, if two invertible Toeplitz operators have bounded symbols that have disjoint singularities, then the Toeplitz operator with the product symbol is also invertible. However in the block case, one can only say that the resulting operator is Fredholm with index zero.

Thus a new version of the localization theorem needs to be proved that does not require the same invertibility conditions. This is what will be done in this paper. With the new version and under appropriate conditions on $\phi$ we prove that 
\begin{equation}\label{Dn.asym}
  D_{n}[\phi] \sim G^{n} \, n^{\Omega} E,\qquad\mbox{ as } n\to\infty, 
\end{equation}
where $G$, $E$, and $\Omega$ are constants that depend on $\phi$ and can be described.
Our main results require some preparations and will be stated Section \ref{sec:main-results}, where we also provide some comments on the constant $E$. Some auxiliary results and their proofs, definitions such as $I$-regularity and $I$-winding number, and operator theoretic preliminaries will be given in Section \ref{preliminaries}.
The proofs of the main results are given in Sections \ref{1st results} and \ref{final results}, which is followed by a brief discussion of a possible alternate approach based on Widom's perturbation result \cite{Wi75a} and some open problems in Section~\ref{alt-approach}. Examples that illustrate our results will be given in Section \ref{sec:Examples}.

In order to put our result into context, let us first recall a version of the Szeg\H{o}-Widom theorem. 
Therein $F=W\cap F\ell^{2,2}_{1/2,1/2}$ stands for the set of all functions $a\in L^1(\T)$ with  Fourier coefficients $a_n$ satisfying
$$
\|a\|_{F}:=\sum_{n=-\infty}^\infty |a_n| +  \left(\sum_{n=-\infty}^\infty  |n| \cdot |a_n|^2\right)^{1/2}<\infty.
$$

\begin{theorem}[Szeg\H{o}-Widom]\label{Sz.W}
Let $\phi\in F\NN$ be such that the determinant $\det \phi(t)$ does not vanish on all of $\T$ and has winding number zero. 
Then 
$$
\lim_{n\to\infty} \frac{\det T_n(\phi)}{G[\phi]^n}=\det T(\phi)T(\phi\iv),
$$
where the right hand side is a well-defined operator determinant and
\be\label{const.G}
G[\phi] =\exp\left( \frac{1}{2\pi} \int_{0}^{2\pi} (\log \det \phi)(e^{ix}) \, dx\right)
\ee
in which $\log \det \phi$ is continuous on $\T$.
\end{theorem}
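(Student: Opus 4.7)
The strategy is the classical Szeg\H{o}--Widom scheme, adapted to the block case and the smoothness class $F$.

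\emph{Step 1 (canonical Wiener--Hopf factorization).} The first step is to produce a factorization
$$\phi = \phi_-\phi_+$$
with $\phi_+$ and $\phi_+\iv$ analytic in the unit disk, $\phi_-$ and $\phi_-\iv$ analytic in its complement, all belonging to $F\NN$, and normalized so that $\det\phi_-(\infty)\cdot\det\phi_+(0)=G[\phi]$. Existence in the matrix-valued case follows from $F$ being an inverse-closed decomposing Banach algebra combined with Gohberg--Krein factorization theory: the zero winding number of $\det\phi$, together with the implicit invertibility of $T(\phi)$ (needed for the right-hand side of the theorem to be a well-defined and generically nonzero Fredholm determinant), forces the partial indices to vanish and yields a canonical factorization whose factors retain the $F$-smoothness.

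\emph{Step 2 (Widom's identity for finite sections).} Because $\phi_-$ has no positive Fourier coefficients and $\phi_+$ has no negative ones, one checks directly that $H(\phi_-)=0$ and $H(\wt{\phi_+})=0$. Consequently $T(\phi)=T(\phi_-)T(\phi_+)$ on $\ell^2(\Z_+)^N$ and $T(\phi_\pm)\iv=T(\phi_\pm\iv)$; moreover each $T(\phi_\pm)$ is block triangular with diagonal blocks $\phi_-(\infty)$ and $\phi_+(0)$. Inserting $I=P_n+Q_n$ in the middle of $P_n T(\phi_-)T(\phi_+)P_n$ yields
$$T_n(\phi) = T_n(\phi_-)\,T_n(\phi_+) + P_n T(\phi_-)\,Q_n\,T(\phi_+)\,P_n.$$
The block-triangular structure forces $T_n(\phi_\pm)\iv=T_n(\phi_\pm\iv)$ and $\det T_n(\phi_-)\det T_n(\phi_+)=G[\phi]^n$, so
$$\frac{\det T_n(\phi)}{G[\phi]^n}=\det\bigl(I + T_n(\phi_-\iv)\,P_n T(\phi_-)Q_n T(\phi_+)P_n\,T_n(\phi_+\iv)\bigr).$$

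\emph{Step 3 (trace-norm limit and identification).} The assumption $\phi\in F\NN$, and in particular the $F\ell^{2,2}_{1/2,1/2}$ contribution to the norm, guarantees that the Hankel operators $H(\phi_\pm^{\pm1})$ are Hilbert--Schmidt, so products of two such operators lie in the trace class. Using the identity $P_n T(\phi_-)Q_n T(\phi_+)P_n = W_n P_n H(\wt{\phi_-})H(\phi_+) P_n W_n$ (where $W_n$ is the flip on the first $n$ block coordinates) together with the conjugation rule $W_n T_n(c) W_n = T_n(\wt c)$, the operator inside the determinant becomes $I+K_n$ with $K_n$ converging in trace norm to a trace-class operator $K$. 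Trace-norm continuity of the Fredholm determinant then gives
$$\lim_{n\to\infty}\frac{\det T_n(\phi)}{G[\phi]^n} = \det(I+K).$$
A final algebraic reduction uses the product identities $H(\phi)=T(\phi_-)H(\phi_+)$ and $H(\wt{\phi\iv})=H(\wt{\phi_-\iv})T(\phi_+\iv)$, together with $T(\phi)T(\phi\iv)=I-H(\phi)H(\wt{\phi\iv})$ and the cyclicity $\det(I+AB)=\det(I+BA)$, to identify $\det(I+K)$ with $\det T(\phi)T(\phi\iv)$.

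\emph{Main obstacle.} The genuinely matrix-valued step is Step~1: in the scalar case one has the explicit formula $\phi_\pm=\exp(L_\pm\log\phi)$ via a Riesz-type decomposition of $\log\phi$, but no such logarithmic construction is available for block symbols, so the factorization must be produced abstractly and one has to verify that the factors remain in $F\NN$. The subsequent identification of the trace-norm limit with the precise operator determinant $\det T(\phi)T(\phi\iv)$ (as opposed to a related quantity involving $\wt\phi$) is essentially a combinatorial exercise with Hankel identities, but it still must be carried out carefully.
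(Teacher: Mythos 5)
The paper does not prove Theorem~\ref{Sz.W} itself; it cites Widom \cite{Wi74,Wi76,Wi75b} and \cite[Sect.~10.25--32]{BS06}, and explicitly remarks that the proof splits into an easy case (when $T(\phi)$ and $T(\phi\iv)$ are invertible, where the GCBO identity gives the result almost immediately) and a harder general case (where one only knows $T(\phi)$ and $T(\phi\iv)$ are Fredholm of index zero). Your proposal handles only the easy case, and the step where it breaks down is Step~1.

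The hypothesis that $\det\phi$ is nonvanishing with winding number zero does \emph{not} yield a canonical Wiener--Hopf factorization. In the block setting that hypothesis only forces the sum of the partial indices to vanish, hence $T(\phi)$ is Fredholm of index zero; a canonical factorization (all partial indices zero) is equivalent to the strictly stronger condition that $T(\phi)$ is invertible. Your parenthetical appeal to ``the implicit invertibility of $T(\phi)$, needed for the right-hand side to be a well-defined and generically nonzero Fredholm determinant'' is a misreading of the statement: the right-hand side $\det T(\phi)T(\phi\iv)$ is a well-defined operator determinant already without invertibility, because $T(\phi)T(\phi\iv)=I-H(\phi)H(\wt{\phi\iv})$ is identity plus trace class whenever $\phi,\phi\iv\in F\NN$. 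When $T(\phi)$ is not invertible this determinant simply equals zero, and the theorem still asserts that the limit on the left equals zero. So the theorem genuinely covers cases your argument cannot reach, and assuming invertibility to launch the factorization is circular. This is not a cosmetic point: as the paper notes in Section~\ref{alt-approach}, bridging from the invertible case to the general Fredholm-index-zero case is exactly where the real work lies (Widom's perturbation lemma \cite{Wi75a} is the standard device; the paper's own main theorems instead sidestep WH factorization altogether by writing $\phi_0$ as a product of exponentials close to the identity, cf.\ Lemma~\ref{l5.1}).

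Steps 2 and 3 are fine \emph{given} a canonical factorization with factors and their inverses in $F\NN$: the identity $T_n(\phi)=T_n(\phi_-)T_n(\phi_+)+W_nH(\wt{\phi}_-)H(\phi_+)W_n$, the use of Lemma~\ref{l1.1b} for trace-norm convergence, and the algebraic identification of $\det\bigl(I+T(\wt{\phi\iv_-})H(\wt{\phi}_-)H(\phi_+)T(\wt{\phi\iv_+})\bigr)$ with $\det T(\phi)T(\phi\iv)$ via $H(\wt{\phi\iv_-})T(\phi_-)=-T(\wt{\phi\iv_-})H(\wt{\phi}_-)$ and cyclicity all go through. But without repairing Step~1 --- either by adding the invertibility of $T(\phi)$ and $T(\phi\iv)$ as an extra hypothesis, or by supplying a perturbation or approximation argument to pass from that case to the general one --- the proof does not establish the theorem as stated.
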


Besides the original references of \cite{Wi74,Wi76, Wi75b} a slightly different operator-theoretic proof can be found in \cite[Sect.~10.25-32]{BS06}. We remark that if both $T(\phi)$ and $T(\phi^{-1})$ are invertible, the proof is easier than in the general case where the stated assumption on $\det \phi(t)$ is equivalent to both $T(\phi)$ and $T(\phi^{-1})$ being Fredholm operators with index zero (see also Theorem  \ref{thm:index} below). Notice also that under the stronger assumption the Szeg\H{o}-Widom theorem follows immediately from the Geronimo-Case-Borodin-Okounkov formula (see \cite[Sect.~10.40]{BS06} and the references therein).
Another proof based on a different approach which uses Banach algebras is given in \cite{E03}. We remark that the
class $F$ considered above can be replaced by more general classes such as Krein algebras. Furthermore,
in the scalar case ($N=1$) a multitude of different proofs of the classical Szeg\H{o} Limit Theorem exist.

Let us now briefly recall what is known about the asymptotics of the determinants $\det T_n(\phi)$ 
for scalar ($N=1$) symbols $\phi$ with jump discontinuities. We assume that the symbol is represented as a product
\begin{equation}\label{f.prod_sc}
\phi(t)= \phi_0(t) \prod_{k=1}^R u_{\beta_k,\tau_k}(t)
\end{equation}
where $\phi_0$ is a sufficiently smooth nonvanishing function on $\T$ with winding number zero, and the functions $u_{\beta,\tau}$
having a single jump at $t=\tau$ are defined by
\begin{equation}\label{u-beta}
u_{\beta,\tau}(t)=(-t/\tau)^{\beta}=\exp(i\beta  \arg(-t/\tau)),\qquad t\in\T,
\end{equation}
with $|\!\arg(\,\cdot\,)|<\pi$.
The numbers $\tau_1,\dots,\tau_R\in\T$ are distinct, and $\beta_1,\dots,\beta_R\in\C$ are the jump parameters.
The Fisher-Hartwig type asymptotics for the determinants $\det T_n(\phi)$ is given by \eqref{Dn.asym} with the constant
$G=G[\phi_0]$ defined in \eqref{const.G}, 
$$
\Omega=-\sum_{k=1}^R \beta_k^2
$$
and a more complicated but explicit constant $E\neq0$. More specifically, in the case of multiple jumps these asymptotics were first proved  \cite{B1978} under the assumption
\begin{itemize}
\item[(a)] $\Re\beta_k=0$ for all $1\le k\le R$.
\end{itemize}
This condition was soon replaced \cite{B1979,Bo82,Blek} by the weaker assumption
\begin{itemize}
\item[(b)] $|\Re\beta_k|<1/2$ for all $1\le k\le R$.
\end{itemize}
Finally, it was proved  \cite{E01} that the asymptotics are valid even under the condition
\begin{itemize}
\item[(c)] $|\Re\beta_k-\Re\beta_j|<1$  and $\beta_k\notin\Z\setminus\{0\}$ for all $1\le j,k\le R$.
\end{itemize}
This last condition on the parameters is sharp. Indeed, if merely $|\Re\beta_k-\Re\beta_j|\le 1$ is assumed and equality is attained for at least some $j,k$, then the Fisher-Hartwig asymptotics breaks down and a generalized asymptotic formula has been proved \cite{DIK11}. Let us also remark that before the general case (c) was established, the following modifications of (b),
\begin{itemize}
\item[(b1)] $0\le \Re\beta_k<1$ for all $1\le k\le R$,\\[-1.3ex]
\item[(b2)] $-1< \Re\beta_k \le 0$ for all $1\le k\le R$,
\end{itemize}
have been dealt with by similar techniques \cite{BM}.

Our main results concern the block case of Fisher-Hartwig symbols with jump discontinuities. The assumptions we need to impose correspond in the scalar case to condition (b) above. Therefore, while this covers a broad situation, it is not the most general case for which the results can be expected to hold. Perhaps cases corresponding to (b1) or (b2) can be established by  
slightly modifying our method, but what corresponds to case (c) or 
to the generalized Fisher-Hartwig asymptotics (aka the Basor-Tracy asymptotics) is considerably more challenging and seems currently out of reach.

As far as the authors are aware of, no general results for Fisher-Hartwig type symbols in the block case are known up to now.
It is possible that for very specific block symbols some results have been obtained  in the literature. For instance, the work of \cite{AEFQ1, AEFQ2} to be
discussed below contains non-rigorous results for particular block jump symbols. There are some cases that be can be trivially reduced to the scalar case, e.g., if the symbol can be transformed into to block triangular matrix functions by (left/right) multiplication with nonsingular constant matrices. Let us also note that for piecewise continuous symbols, it is quite obvious what kind of symbols are their generalization to the block case. However, it is less clear what should constitute the block analogue of general Fisher-Hartwig type symbols or even symbols that only involve zero/pole-type singularities.

\subsection{Application: Entanglement entropy} In many instances entanglement entropy of various quantum spin chain models, such as the XX, XY and Ising chains, can be computed using the Szeg\H{o}-Widom limit theorem or determinants involving Toeplitz matrices generated by $2\times 2$ matrix-valued symbols that possess jump discontinuities. The former, when the smooth symbol is matrix-valued, still requires the computation of the constant in the expansion that is known only in rare cases, such as those of Its, Mezzadri and Mo~\cite{IMM}, in which the authors compute the von Neumann entropy of entanglement of the ground state of a wide family of one-dimensional quantum spin chain models (incl.~the XX and the XY models).

Jin and Korepin \cite{JK} were the first to rigorously compute the von Neumann entropy of the ground state of the XX model, and in particular showed that the entropy grows like $\frac13\log L$ (where $L$ is the length of the chain) at a phase transition using the asymptotics of Toeplitz determinants with piecewise continuous \emph{scalar}-valued symbols. Toeplitz determinants with non-singular \emph{matrix}-valued symbols first appeared in the computation of the entropy of the XY model in~\cite{IJK} and other more general one-dimensional models in~\cite{IMM}.

The basic idea of how Toeplitz determinants enter the study of entanglement is as follows. Consider the Hamiltonian
\begin{multline}\label{e:H}
	H_\alpha = -\frac\alpha2 \sum_{0\le j\le k\le M-1}
	\left( (A_{jk} + \gamma B_{jk})\sigma^x_j \sigma^x_k
	\prod_{l=j+1}^{k-1} \sigma_l^z\right. \\
	\left. + (A_{jk} - \gamma B_{jk})\sigma^y_j \sigma^y_k
	\prod_{l=j+1}^{k-1} \sigma_l^z  \right) - \sum_{j=0}^{M-1} \sigma^z_j,
\end{multline}
where $\sigma^x_j, \sigma^y_j, \sigma^z_j$ stand for the Pauli matrices which describe spin operators on the $j$th lattice site of a chain with $M$ 
sites, $A$ is symmetric, $B$ is antisymmetric, and both are translation-invariant. We note that this generalizes the XY model whose Hamiltonian is given by
\begin{equation}\label{e:XY}
	H_{\alpha}^{\mathrm{XY}} = -\frac\alpha2 \sum_{j=0}^{M-1}
	\left( (1+\gamma)\sigma^x_j\sigma^x_{j+1} + (1-\gamma)\sigma^y_j\sigma^y_{j+1}\right) - \sum_{j=0}^{M-1} \sigma^z_j,
\end{equation}
where $\gamma\in [0,1]$. Further, when $\gamma=0$, \eqref{e:XY} provides the Hamiltonian of the XX model. We also remark that at the critical value $\alpha=1$, the XY model undergoes a phase transition. Going back to the Hamiltonian in \eqref{e:H}, if we divide the system into two subchains, denoting the part containing the first $L$ spins by $A$ and the second part containing the remaining $M-L$ spins by $B$ with $1\ll L \ll M$, then the von Neumann entropy $S(\rho_A)$ is given by
\begin{equation}\label{e:entropy}
	S(\rho_A) = -\tr \rho_A \log \rho_A,
\end{equation}
where $\rho_A = \tr_B \rho_{AB}$ and $\rho_{AB} = |\Psi_g \rangle \langle \Psi_g|$. It turns out that (see, e.g., \cite{IMM})
\begin{equation}\label{e:entropy-rep}
	S(\rho_A) = \lim_{\epsilon \to 0} \frac1{4\pi i} \int_{\Gamma(\epsilon)}
	e(1+\epsilon, \lambda) \frac{d \log D_L(\lambda)}{d\lambda} d\lambda,
\end{equation}
where $\Gamma(\epsilon)$ is the contour depicted in Figure~\ref{figure} and oriented counterclockwise, 
$$
	e(x,y) = -\frac{x+y}2\log \left(\frac{x+y}2\right)-\frac{x-y}2\log\left(\frac{x-y}2\right),
$$
and $D_L(\lambda)$ is the Toeplitz determinant of some symbol $\phi$ depending on the model. 
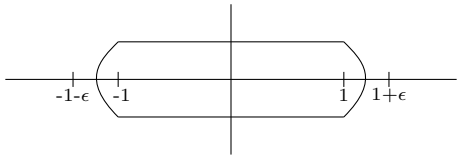
\begin{figure}
\begin{tikzpicture}
\draw (0,1) -- (0,-1);
\draw (-3,0) -- (3,0);
\draw (-1.5,-0.1) -- (-1.5,0.1); 
\draw (1.5,-0.1) -- (1.5,0.1);
\draw (2.1,-0.1) -- (2.1,0.1);
\draw (-2.1,-0.1) -- (-2.1,0.1);
\draw (-1.5,0.5) -- (1.5,0.5);
\draw (-1.5,-0.5) -- (1.5,-0.5);
\draw (-1.5,-0.5) .. controls (-2,0) and (-1.75,0.25) .. (-1.5,0.5);
\draw (1.5,-0.5) .. controls (2,0) and (1.75,0.25) .. (1.5,0.5);

\coordinate (A) at (-1.45,-0.2);
\coordinate (B) at (1.5,-0.2);
\coordinate (C) at (2.1,-0.2);
\coordinate (D) at (-2.1,-0.2);

\node[yshift=-0.1] at (A) {\tiny -1};
\node[yshift=-0.1] at (B) {\tiny 1};
\node[yshift=-0.1] at (C) {\tiny 1+$\epsilon$};
\node[yshift=-0.1] at (D) {\tiny -1-$\epsilon$};

\end{tikzpicture}
\caption{The contour $\Gamma(\epsilon)$ of the integral in \eqref{e:entropy-rep}.}\label{figure}
\end{figure}
In the XX model, $\phi$ is a scalar symbol and the standard theory of Toeplitz determinants apply. In the $XY$ model, the symbol is matrix-valued and given by
\begin{equation}\label{e:entropy-symb}
	\phi(\theta) = \begin{pmatrix}
	i\lambda & g(\theta)\\
	-g(\theta)^{-1} & i\lambda
	\end{pmatrix},
\end{equation}
where
$$
	g(\theta) = \frac{\alpha\cos \theta - 1 - i\gamma \sin\theta}{|\alpha \cos \theta - 1 - i \gamma \sin\theta|}.
$$
In \cite{IJK} and \cite{IMM}, the entropy of the XY model and its generalization, respectively, is computed using the Szeg\H{o}-Widom limit theorem \eqref{Sz.W} when $\phi$ in \eqref{e:entropy-symb} is sufficiently nice. However, in critical cases, such as when $\alpha=1$, the matrix-valued symbol $\phi$ has jumps and the Szeg\H{o}-Widom limit theorem no longer applies. This motivates the study of the asymptotics of Toeplitz determinants with piecewise continuous matrix-valued symbols, which we have initiated in this work and in particular we discuss the specific results in the next section. Further, as will be discussed in Section \ref{sec:Examples}, our results cover the critical case $\alpha=1$, providing the asymptotics of $D_L(\lambda)$ in \eqref{e:entropy-rep} when $\lambda\notin [-1,1]$, and pave the way for further study in this direction. It is also worth noting that when the chain is non-contiguous, as in \cite{GIKMV}, for example, it is no longer possible to deduce the study of the asymptotics of Toeplitz determinants directly and instead one needs to deal with certain block structures where, nevertheless, Toeplitz matrices with piecewise continuous matrix-valued symbols still appear but they are not in the scope of our present work.

In a related work of Ares et al.~\cite{AEFQ1} the authors consider the R\'enyi entanglement entropy for quadratic spinless fermionic chains with complex finite-range interactions, which leads to the asymptotic study of Toeplitz determinants with piecewise continuous matrix-valued symbols. More precisely, their work includes the study of quantum spin chain models with Dzyaloshinski-Moriya coupling and a Kitaev fermionic chain with long-range pairing. As in the previous works discussed above, a formula similar to \eqref{e:entropy-rep} is used to compute the entropy $S_\alpha(X)$ of the subsystem  $X$ with a particular choice of the Toeplitz determinant $D_X(\lambda)$, where $\alpha \in [0,1)$ and the limit $\alpha\to 1$ provides the von Neumann entropy discussed above. More precisely, in~\cite{AEFQ1}, it is argued that the entropy is given by
\begin{equation}\label{e:Renyi}
	S_\alpha(X) = \lim_{\epsilon \to 0} \frac{1}{4\pi i} \int_{\Gamma}
	f_\alpha(1+\epsilon, \lambda) \frac{d\log D_X(\lambda)}{d\lambda}d\lambda,
\end{equation}
where 
$$
	f_\alpha(x,y) = \frac{1}{1-\alpha} \log\left[ \left(\frac{x+y}2\right)^\alpha
	+\left(\frac{x-y}2\right)^\alpha\right]
$$
and $\Gamma$ is similar to the contour of integration in \eqref{e:entropy-rep}---we omit the full details and instead focus our attention on the block Toeplitz matrix $D_X(\lambda)$ that appear in \eqref{e:Renyi}. Indeed, in Section~\ref{sec:Examples}, we write down the matrix symbol in \eqref{e:Renyi-symb} and then proceed to analyze the corresponding asymptotics using our main results. It turns out that our findings are indeed in agreement with those obtained less rigorously in \cite{AEFQ1}.

\section{Basic definitions and statement of the main results}
\label{sec:main-results}

We denote by $(\ell^2)^N=\ell^{2}(\Z_+)^{N}$ the space of all $\C^N$-valued sequences $\{x_{n}\}_{n=0}^{\infty}$ 
equipped with the usual $2$-norm, which can be identified with direct sum of $N$ copies of $\ell^2(\Z_+)$.
Likewise, $L^\iy(\T)^{N\times N}$ stands for the space of all essentially bounded $\C^{N\times N}$-valued functions on $\T$, which can be identified with the space of all $N\times N$ matrices with entries from $L^\iy(\T)$. 

Given a bounded symbol $a\in L^\iy(\T)\NN$, the Toeplitz operator $T(a)$ and Hankel operator $H(a)$ are the bounded linear operators defined on $(\ell^{2})^N$ via the matrix representations
$$
T(a) = (a_{j-k}),\quad 0\leq j,k < \infty, 
$$
and
$$
H(a) = (a_{j+k+1}),\quad 0\leq j,k < \infty.
$$
Therein, 
$$
a_{k}=\frac{1}{2\pi}\int_{0}^{2\pi} a(e^{i\theta})e^{-ik\theta}\, d\theta,\qquad k\in\Z,
$$
are the (matrix) Fourier coefficients $a_k\in\C\NN$ of the function $a$.

Throughout this paper, let $\Ga=\{\tau_1,\dots,\tau_R\}\subset \T$ be set of $R$ distinct points
taken from the unit circle. We allow the case of $R=0$, i.e., $\Gamma=\emptyset$.

Let $PC(\T;\Ga)$ stand for the set of piecewise continuous functions
$\phi:\T\to\C$ which are continuous on $\T\setminus\Ga$. In other words, $\phi$ can have only jump discontinuities
at the finitely many points $\tau_1,\dots,\tau_R\in\T$.  For the one-sided limits at the jumps we will use the notation
$$
\phi(t\pm0) = \lim_{\theta\to +0}  \phi(te^{\pm i \theta}).
$$

Let $I\subset\R$ be a subset with the property that it does not contain two numbers whose difference is a nonzero integer.
In this paper, only the case of the open interval $I=(-1/2,1/2)$ is of interest to us. The consideration of the general setting here comes with no extra effort and might prove useful
in dealing with other cases of the determinant asymptotics elsewhere.

We call the function $\phi\in PC(\T;\Ga)\NN$  {\em $I$-regular} if
\begin{itemize}
\item[(a)]
$\phi(t)$ is invertible for all $t\in\T\setminus \Gamma$,\\[-2ex]
\item[(b)]
for each  $1\le k\le R$, both $\phi(\tau_k+0)$ and $\phi(\tau_k-0)$ are invertible matrices,\\[-2ex]
\item[(c)] 
for each $1\le k\le R$, one can choose the matrix logarithm
\begin{equation}\label{f.Lk}
L_k=\frac{1}{2\pi i}\log\left(\phi(\tau_k+0)^{-1}  \phi(\tau_k-0) \right)
\end{equation}
such that the real parts of all of its eigenvalues lie in $I$.
\end{itemize}
The above condition on $I$ guarantees that the $L_k$'s are uniquely determined.
Therefore, it is possible to define the {\em $I$-winding number} of an $I$-regular function $\phi$,
\begin{align}\label{f.wind}
\wind(\phi;I)=-\sum_{k=1}^R \tr(L_k)+\frac{1}{2\pi i}\sum_{k=1}^R \Big[\Delta \log \det \phi(t)\Big]_{t=\tau_{k}+0}^{\tau_{k+1}-0}.
\end{align}
Here $\tau_{R+1}=\tau_1$, and $\Delta(\dots)$ denotes the continuous increment of the (continuous) logarithm of the determinant on the arc $(\tau_{k},\tau_{k+1})$. 
Only for the sake of this definition we assume that $\tau_1,\dots,\tau_R$ appear in this order on the 
unit circle, i.e.,  $\tau_1=e^{i\theta_1},\dots,\tau_R=e^{i\theta_R}$ with $0\le \theta_1<\dots<\theta_R<2\pi$.

For a continuous non-vaninishing scalar function $c\in C(\T)$, the (usual) winding number is defined by
\begin{equation}\label{wind.scalar}
\wind(c)=\frac{1}{2\pi i}  \Big[\Delta \log c(e^{i\theta})\Big]_{\theta=0}^{2\pi}.
\end{equation}
In the case $R=0$ (i.e., $\Gamma=\emptyset$) the definition \eqref{f.wind} comes down to 
$$
\wind(\phi;I)=\wind(\det\phi),
$$
i.e., the $I$-winding number of the (continuous and invertible) matrix function $\phi(t)$ equals the winding number of its determinant $\det \phi(t)$.

Basic properties regarding the notions of $I$-regularity and the $I$-winding number will be established in Section \ref{sec:I-reg}.

The following theorem, which will be proved in Section \ref{index proof},  establishes the equivalence of four conditions.
These conditions (with $\kappa=0$) will appear  as the regularity assumption in our main results.

\begin{theorem}\label{thm:index}
Let $I=(-1/2,1/2)$, $\kappa\in\Z$, and $\phi\in PC(\T;\Gamma)^{N\times N}$. Then the following conditions are equivalent:
\begin{enumerate}
\item[(i)]
$T(\phi)$ is Fredholm on  $(\ell^2)^N$ with index $\ind T(\phi)=-\kappa$.
\\
\item[(ii)]
$T(\tilde{\phi})$ is Fredholm on  $(\ell^2)^N$ with index $\ind T(\tilde{\phi})=\kappa$.
\\
\item[(iii)]
$\phi$ is $I$-regular and $\kappa=\wind(\phi;I)$.
\\
\item[(iv)]
$\phi$ is $I$-regular and $\kappa=\wind(c)$.
\end{enumerate}
Therein, $c$ is the continuous and nonvanishing function on $\T$ defined by
\begin{equation}\label{fct.c}
c(t)=\frac{\det \phi(t)}{\prod_{k=1}^R u_{\beta_k,\tau_k}(t)}
\end{equation}
with $\beta_k=\tr L_k$ and the $L_k$'s given by \eqref{f.Lk}, and the functions $u_{\beta, \tau}$ are defined in \eqref{u-beta}.
\end{theorem}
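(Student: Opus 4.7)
The plan is to establish (iii) $\Leftrightarrow$ (iv) by a direct winding-number computation, to prove (iii) $\Leftrightarrow$ (i) via a factorization of $\phi$ into a continuous invertible part and canonical singular factors, and to deduce (i) $\Leftrightarrow$ (ii) from a reflection argument $t\mapsto\bar t$.

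For (iii) $\Leftrightarrow$ (iv): at each $\tau_k$, the jump ratio $\det\phi(\tau_k-0)/\det\phi(\tau_k+0) = \det(e^{2\pi iL_k}) = e^{2\pi i\beta_k}$ matches the jump of $u_{\beta_k,\tau_k}$, so the function $c$ in \eqref{fct.c} is continuous on $\T$. Splitting $2\pi i\,\wind(c)$ into a sum over the arcs between consecutive singularities and using the elementary identity
$$
\sum_{k=1}^R\Big[\Delta\log u_{\beta_j,\tau_j}(t)\Big]_{t=\tau_k+0}^{\tau_{k+1}-0} = 2\pi i\beta_j
$$
(the ``would-be'' winding of the discontinuous $u_{\beta_j,\tau_j}$), direct comparison with \eqref{f.wind} together with $\beta_k=\tr L_k$ yields $\wind(c)=\wind(\phi;I)$.

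For (iii) $\Rightarrow$ (i), I introduce canonical singular factors $V_k(t) = \exp(iL_k\arg(-t/\tau_k))$, each piecewise continuous with a single jump at $\tau_k$ whose jump matrix $V_k(\tau_k+0)^{-1}V_k(\tau_k-0) = e^{2\pi iL_k}$ matches that of $\phi$. The aim is a factorization $\phi = \phi_c V_1\cdots V_R$ with $\phi_c$ continuous and invertible (and automatically $\det\phi_c = c$ since $\det V_k = u_{\beta_k,\tau_k}$). Using Widom's identity $T(ab)=T(a)T(b)+H(a)H(\tilde b)$ together with compactness of $H(f)$ for continuous $f$ and of $H(V_j)H(\tilde V_k)$ for $j\ne k$ (disjoint singularities), one obtains $T(\phi)\equiv T(\phi_c)T(V_1)\cdots T(V_R)$ modulo compact operators. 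Classical Gohberg theory gives $\ind T(\phi_c) = -\wind(c)$; a Jordan decomposition $L_k=SJS^{-1}$ realizes $V_k$ as similar, via the constant matrix $S$, to an upper-triangular matrix function with scalar diagonal entries $u_{\lambda_j,\tau_k}$, $\Re\lambda_j\in I$, each contributing index $0$ by the scalar PC theory, so $\ind T(V_k)=0$ and $\ind T(\phi) = -\wind(c) = -\kappa$. The converse (i) $\Rightarrow$ (iii) follows from the Gohberg-Krupnik Fredholm criterion for matrix PC Toeplitz operators: Fredholmness forces $\phi(t\pm 0)$ invertible and $\phi(\tau_k+0)^{-1}\phi(\tau_k-0)$ to have no negative real eigenvalues, precisely the $I$-regularity of $\phi$. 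Finally, (i) $\Leftrightarrow$ (ii) follows since $\tilde\phi$ has jumps at $\bar\tau_k$ with $\tilde L_k=-L_k$, and a direct computation gives $c_{\tilde\phi}(t) = c(\bar t)$ so $\wind(c_{\tilde\phi}) = -\wind(c)$; hence $\ind T(\tilde\phi) = -\ind T(\phi) = \kappa$.

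The main technical obstacle lies in the factorization step: non-commutativity of the matrix factors prevents the naive choice $\phi_c = \phi\cdot V_R^{-1}\cdots V_1^{-1}$ from being continuous at $\tau_k$, as this would force the continuous constants $V_j(\tau_k)$ for $j>k$ to commute with $L_k$, which generally fails. The resolution is either to work locally at each $\tau_k$ via a partition-of-unity / Simonenko-style local principle (so that each local symbol carries only one jump) or to absorb the non-commuting factors into $\phi_c$ via suitable constant-matrix conjugation; either route requires careful bookkeeping to track determinants and Fredholm indices through the construction.
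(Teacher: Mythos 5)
Your argument for (iii)$\Leftrightarrow$(iv) is correct and in fact somewhat more direct than the paper's, which obtains $c=\det\phi_0$ only after constructing the product representation of Proposition~\ref{p.prod}. Your reflection argument for (i)$\Leftrightarrow$(ii) is also essentially right; one small imprecision is that $\tilde L_k$ is only similar to $-L_k$, not equal to it, but since only $\tr\tilde L_k=-\tr L_k$ enters, the conclusion is unaffected.

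The genuine gap is in (iii)$\Rightarrow$(i), and you have put your finger on it yourself without resolving it. With $V_k=u_{L_k,\tau_k}$ the cofactor $\phi_c=\phi\,V_R^{-1}\cdots V_1^{-1}$ is in general \emph{not} continuous, so the Widom-identity argument for $T(\phi)\equiv T(\phi_c)T(V_1)\cdots T(V_R)$ modulo compacts is built on a factorization you then concede does not exist; the two workarounds you float (a local principle, or ``absorbing the noncommuting constants'') are left as sketches and are precisely what needs to be done. The paper supplies exactly the second fix in Proposition~\ref{p.prod}: peeling off jump factors from the right, starting at $\tau_R$, one proves $\phi=\phi_0\,u_{B_1,\tau_1}\cdots u_{B_R,\tau_R}$ with $\phi_0$ continuous and invertible and each $B_k$ \emph{similar} to $L_k$ but generally not equal to it; explicitly $L_k=S_k^{-1}B_kS_k$ with $S_k=u_{B_{k+1},\tau_{k+1}}(\tau_k)\cdots u_{B_R,\tau_R}(\tau_k)$ (formula~\eqref{LkBkSim}), and only $B_R=L_R$. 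Since $\tr B_k=\tr L_k=\beta_k$, one still has $\det\phi_0=c$. With that representation in hand, the paper computes the index not by compact perturbations and additivity but by the homotopy $\phi_\lambda=\phi_0\,u_{\lambda B_1,\tau_1}\cdots u_{\lambda B_R,\tau_R}$: all $\phi_\lambda$ remain $I$-regular, so $T(\phi_\lambda)$ stays Fredholm and $\ind T(\phi)=\ind T(\phi_0)=-\wind(\det\phi_0)$. This sidesteps having to show $\ind T(u_{B_k,\tau_k})=0$ separately, which your Jordan-form sketch would supply (and is indeed correct; compare Proposition~\ref{p.pure}). In short, the architecture you propose is viable, but the crux is a rigorous product representation for piecewise continuous matrix symbols, and as written your proposal identifies rather than closes that gap.
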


Let us remark that if $\phi$ is invertible in $L^\infty(\T)\NN$, then in condition (ii) the operator $T(\tilde{\phi})$ can be replaced by the operator $T(\phi^{-1})$ (see Proposition \ref{p.T-Fred}).

This theorem rephrases the well-known criteria for Fredholmness of block Toeplitz operators on $(\ell^2)^N$
with piecewise continuous symbols in terms of $I$-regularity in the case of finitely many jump discontinuities. 
More importantly, it provides an explicit way to determine the Fredholm index
either via the $I$-winding number in (iii) or via the winding number of a scalar function in (iv).

We are aware of two further, but different approaches to compute the Fredholm index of the block Toeplitz operator
 with piecewise continuous matrix symbol. One, which is somewhat similar, can be found in the monograph  by Gohberg, Goldberg, Kaashoek \cite[Sect.~XXV.3]{GGK}. Another one, which can be applied to 
 to a much larger class of symbols but is perhaps less explicit, is based on approximate identities. It can be found in the monograph by B\"ottcher and Silbermann \cite[Sect.~4.27-4.31]{BS06}
(see also the references and comments therein).

\medskip
To specify the smoothness condition in our main results we introduce two classes of functions, which generalize the familiar class $C^{1+\eps}(\T)$  of differentiable functions with a H\"older-Lipschitz continuous derivative of order $0<\eps<1$.

\begin{definition}\label{PC1+e_def}
Let $PC^{1+\eps}(\T;\Gamma)$ stand for the set of all functions $a\in PC(\T;\Gamma)$ for which $a$ is continuously differentiable on 
$\T\setminus\Gamma$ and has a derivative satisfying a H\"older-Lipschitz condition of order $\eps>0$ on each arc 
$(\tau_k,\tau_{k+1})$, $1\le k\le R$ with $\tau_{R+1}=\tau_1$.
Here, as before,  $\tau_1,\dots,\tau_R$ appear in this order on the 
unit circle, i.e.,  $\tau_1=e^{i\theta_1},\dots,\tau_R=e^{i\theta_R}$ with $0\le \theta_1<\dots<\theta_R<2\pi$.
Furthermore, let 
\begin{equation}\label{f.PC1+e}
\CpwTG=PC^{1+\eps}(\T;\Gamma)\cap C(\T),
\end{equation}
which is the class of continuous functions with a piecewise H\"older-Lipschitz derivative.
\end{definition}

Both $PC^{1+\eps}(\T;\Gamma)$ and $\CpwTG$ are Banach algebras with the norm
$$
\|a\| =
\|a\|_\infty +\sum_{k=1}^R \sup\limits_{\theta_k<x<y<\theta_{k+1}} \frac{|a'(e^{ix})-a'(e^{iy})|}{|x-y|^{\eps}},
$$
where $\theta_{R+1}=\theta_1+2\pi$.

For a matrix $B\in\C\NN$ introduce the piecewise continuous matrix function  with a single jump discontinuity 
at $\tau\in\T$ by
\begin{equation}\label{u-B}
u_{B,\tau}(t)=(-t/\tau)^{B}=\exp(iB\arg(-t/\tau)),\qquad t\in\T.
\end{equation}
Here $|\!\arg(\cdot)|<\pi$.
The function $u_{B,\tau}$ is  the matrix  analogue of the scalar function $u_{\beta,\tau}$ defined in \eqref{u-beta}.

Our main results concerning the asymptotics of $\det T_n(\phi)$ for piecewise continuous matrix symbols
$\phi$ are as follows. Note that the description of the asymptotics requires a product representation of the symbol
$\phi$ which is the matrix analogue of \eqref{f.prod_sc}. The existence of this representation will therefore be part of the theorem.

\begin{theorem}\label{main-3}
Let $\phi\in PC^{1+\eps}(\T;\Gamma)\NN$. Assume that one (hence all) of the equivalent conditions (i)-(iv) in Theorem \ref{thm:index} hold with $\kappa=0$. 
Then $\phi$ admits a unique representation of the form
\begin{align}\label{f.prod0}
\phi(t)&=\phi_0(t)\phi_1(t)\cdots \phi_R(t)
\end{align}
where $\phi_0\in\CpwTG\NN$ is an invertible function with $\wind(\det \phi_0)=0$
and 
$$
\phi_k(t)=u_{B_k,\tau_k}(t),\qquad 1\le k\le R,
$$
with the matrices $B_k\in\C\NN$ having the property that the real parts of all their eigenvalues $\beta_k^{(1)},\dots,\beta_k^{(N)}$ are contained in the interval $I=(-1/2,1/2)$.

Moreover, 
\be\label{e:asymp}
\lim_{n\to\infty} \frac{\det T_n(\phi)}{G^n n^\Omega} =E
\ee
where 
\begin{align}\label{G.con1}
G =& \exp\left(\frac{1}{2\pi} \int_0^{2\pi} (\log \det \phi_0)(e^{ix})\, dx\right),
\\  \label{Om.con1}
\Omega =& - \sum_{k=1}^R\sum_{j=1}^N \, (\beta^{(j)}_k)^2,
\\
E =& \prod_{k=1}^R\prod_{j=1}^N G(1+\beta^{(j)}_k) \,G(1-\beta^{(j)}_k) \nn
\\
&\times \det\Big( T(\phi)T(\phi_R)^{-1}\cdots T(\phi_1)^{-1} T(\phi_1^{-1})^{-1}\cdots T(\phi_R^{-1})^{-1} T(\phi^{-1})\Big).
 \label{f.constE1}
\end{align}
\end{theorem}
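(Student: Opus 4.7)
The plan is to first establish the factorization \eqref{f.prod0} and then reduce the determinant asymptotics to single-jump contributions via repeated application of the paper's new localization theorem, feeding in Szeg\H{o}--Widom for the smooth factor and the classical scalar Fisher--Hartwig formula for each $u_{B_k,\tau_k}$. For the factorization, I would peel off the jumps one at a time from the right: having already defined $\phi_R,\dots,\phi_{k+1}$ so that $\psi_k:=\phi\,\phi_R\iv\cdots\phi_{k+1}\iv$ has jumps only at $\tau_1,\dots,\tau_k$, the jump ratio of $\psi_k$ at $\tau_k$ is a matrix similar to $\exp(2\pi i L_k)$, and the spectral constraint $|\Re\beta_k^{(j)}|<1/2$ uniquely determines a matrix $B_k$ with $\exp(2\pi i B_k)$ equal to this jump ratio; then $\phi_k:=u_{B_k,\tau_k}$ renders $\psi_{k-1}=\psi_k\phi_k\iv$ continuous at $\tau_k$. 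After $R$ steps, $\phi_0=\psi_0$ is continuous, lies in $\CpwTG\NN$ (by smoothness of $\phi$ off $\Gamma$ and of each $\phi_k$), and is pointwise invertible by $I$-regularity of $\phi$. The identity $\det u_{B_k,\tau_k}=u_{\tr B_k,\tau_k}$ shows that $\det\phi_0$ coincides with the scalar function $c$ in \eqref{fct.c}, so condition (iv) of Theorem~\ref{thm:index} with $\kappa=0$ yields $\wind(\det\phi_0)=0$. Uniqueness is immediate from the spectral constraint.

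For the asymptotics I would invoke the new localization theorem iteratively. At each step a product of two factors with disjoint singular supports is split: first $\phi=\phi_0\cdot(\phi_1\cdots\phi_R)$, then successively $\phi_k\cdots\phi_R=\phi_k\cdot(\phi_{k+1}\cdots\phi_R)$. Theorem~\ref{thm:index} ensures that the intermediate partial products are $I$-regular with $I$-winding number zero, hence define Fredholm operators of index zero, which is exactly the hypothesis required by the new localization theorem (as opposed to the invertibility demanded by the classical one). Telescoping the intermediate operator-determinant correction factors yields
\begin{equation*}
\frac{D_n[\phi]}{D_n[\phi_0]\,D_n[\phi_1]\cdots D_n[\phi_R]}\;\longrightarrow\;\det\bigl(T(\phi)T(\phi_R)\iv\cdots T(\phi_1)\iv T(\phi_1\iv)\iv\cdots T(\phi_R\iv)\iv T(\phi\iv)\bigr).
\end{equation*}
Szeg\H{o}--Widom (Theorem~\ref{Sz.W}) applied to $\phi_0$ contributes $G^n\det T(\phi_0)T(\phi_0\iv)$, the latter operator determinant being absorbed into the telescoping.

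The single-jump asymptotics of $D_n[u_{B_k,\tau_k}]$ I would handle first for diagonalizable $B_k$: writing $B_k=P_k D_k P_k\iv$ with $D_k=\diag(\beta_k^{(1)},\dots,\beta_k^{(N)})$ and using the similarity invariance $\det T_n(P\psi P\iv)=\det T_n(\psi)$ for constant invertible $P$, the determinant factors as $\prod_{j=1}^N D_n[u_{\beta_k^{(j)},\tau_k}]$, whose asymptotics are given by the classical scalar Fisher--Hartwig expansion. This contributes $n^{-\sum_j(\beta_k^{(j)})^2}\prod_j G(1+\beta_k^{(j)})G(1-\beta_k^{(j)})$; multiplying over $k$ yields the $n^\Omega$ of \eqref{Om.con1} and the Barnes-$G$ portion of $E$ in \eqref{f.constE1}.

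The principal obstacle will be the non-diagonalizable case for $B_k$. My plan here is an analytic-perturbation argument: fixing the $\tau_k$, both $\det T_n(\phi)/(G^n n^\Omega)$ and the proposed limit $E$ depend analytically on the entries of $B_1,\dots,B_R$ throughout the open region where every eigenvalue has real part in $(-1/2,1/2)$; diagonalizable matrices are dense there, and the identity will extend provided one can interchange the limit $n\to\infty$ with the parameter limit. The required uniformity should follow from stability of the operator-determinant corrections under small perturbations of $B_k$ together with the continuity estimates implicit in the localization theorem. A secondary concern is the exact bookkeeping needed to see that the iterated localization corrections collapse into the single operator determinant of \eqref{f.constE1} rather than a more complicated product; I would address this by keeping the factorization order fixed throughout and repeatedly using multiplicativity of operator determinants modulo trace class.
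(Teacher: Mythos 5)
Your factorization argument matches the paper's (Proposition~\ref{p.prod}, Corollary~\ref{p.prod.cor}) and is correct, as is the reduction of the pure-jump determinants to the scalar case via Jordan form for the diagonalizable situation. The critical gap is in your treatment of the localization step. You assert that the ``new'' localization theorem accepts Fredholm-index-zero hypotheses on the factors, but the paper's localization result (Theorem~\ref{t:asym:1}) still requires \emph{stability} of each sequence $T_n(\phi_k)$, which for the continuous factor $\phi_0\in\CpwTG\NN$ is equivalent (Proposition~\ref{stab.cont}) to invertibility of both $T(\phi_0)$ and $T(\tilde\phi_0)$ --- and that is precisely what the hypotheses of Theorem~\ref{main-3} do \emph{not} give you, since index zero does not imply invertibility in the block case. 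The paper's actual device for removing this invertibility assumption is Lemma~\ref{l5.1}: factor $\phi_0=e^{\eta_0}\cdots e^{\eta_S}$ with $\|\eta_j\|$ small, so that each $T(e^{\eta_j})$, $T(\widetilde{e^{\eta_j}})$ is automatically invertible and $T_n(e^{\eta_j})$ stable, then apply the localization theorem to the refined product $e^{\eta_0}\cdots e^{\eta_S}\phi_1\cdots\phi_R$. Nothing in your outline substitutes for this step; as written, your telescoping claim fails at the very first split $\phi=\phi_0\cdot(\phi_1\cdots\phi_R)$ whenever $T(\phi_0)$ is not invertible.

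Two secondary issues. First, collapsing $E_1E_2E_3$ into the single operator determinant of \eqref{f.constE1} is not mere bookkeeping: it relies on the nontrivial identity $\det T(\tilde\phi)T(\tilde\phi_R)^{-1}\cdots T(\tilde\phi_0)^{-1}=\det T(\phi_0^{-1})^{-1}\cdots T(\phi_R^{-1})^{-1}T(\phi^{-1})$, which the paper proves (Proposition~\ref{id:det-2}, Theorem~\ref{id:det-R}) via an analytic-continuation argument requiring invertibility of the Toeplitz operators along a whole deformation path --- another place where the small-exponential factorization is doing work you have not accounted for. Second, your proposed analytic perturbation in the $B_k$'s for the non-diagonalizable case is unnecessary and harder than you acknowledge: the interchange of $n\to\infty$ with the matrix-parameter limit needs a uniformity estimate that is not obviously available. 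The paper (Proposition~\ref{p.pure}) avoids this entirely by passing to Jordan normal form and noting that $u_{B,\tau}$ then becomes block upper-triangular with the scalar functions $u_{\beta,\tau}$ on the diagonal, so that invertibility, stability, and determinants all reduce directly to the diagonal entries.
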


As will be seen below (see Proposition \ref{p.prod} and formula \eqref{LkBkSim}) the matrices $B_k$ are similar to the matrices $L_k$. However, due to non-commutativity in the block case, they are in general not equal to each other except for the last ones, $B_R=L_R$.

Note that the first part of the constant $E$ features the Barnes $G$-function, an entire function defined by
\begin{equation}\label{Barnes}
G(1+z)=(2\pi)^{z/2}e^{-(z+1)z/2-\gamma_E z^2/2}\prod_{k=1}^\infty \left(\left(1+\frac{z}{k}\right)^ke^{-z+z^2/(2k)}\right)
\end{equation}
with $\gamma_E$ being Euler's constant. Note that this part of the constant $E$ is always nonzero under our assumptions.

The second part of the constant $E$ is a well-defined operator determinant,
i.e., it is the determinant of an operator of the form identity plus a trace class operator. 
In particular, the Toeplitz operators $T(\phi_k)$ and $T(\phi_k^{-1})$, $1\le k\le R$,  appearing therein are invertible. Note that 
$T(\phi_0)$ and $T(\phi_0^{-1})$ do not occur in the product. In fact, it need not be the case that $T(\phi_0)$, $T(\phi_0^{-1})$, $T(\phi)$, or $T(\phi^{-1})$ are invertible.  Our assumptions only imply that these four operators are Fredholm operators with index zero.  
To see this we can refer to Theorem \ref{thm:index} and Proposition \ref{p.T-Fred} below.
It is therefore possible that the operator-determinant (and hence the constant $E$) is zero, namely
when $T(\phi)$ or $T(\phi^{-1})$ is not invertible.

In the case of no jump discontinuities (i.e.,  $R=0$  and $\Gamma=\emptyset$) the previous theorem comes of course down to the Szeg\H{o}-Widom limit theorem.
Already in this case, no other general explicit expression is known for the operator determinant in the constant $E$ in the block case ($N\ge2$).
For certain very special classes the computation of $E$ can be done, such as for the smooth matrix-valued symbol discussed above in \eqref{e:entropy-symb} with $\alpha<1$ an expression was found using rather involved computations and Riemann-Hilbert analysis in \cite{IMM}. See also \cite[Sect.~10]{DIK13} for a review of some situations where effective evaluations have been obtained.

If one is not interested in the description of $E$, the formulation of the main result can be simplified.
One does not need the product representation \eqref{f.prod0} and the expressions for the constants $G$ and $\Omega$ can be stated differently.

\begin{corollary}\label{c.main-2}
Let $\phi\in PC^{1+\eps}(\T;\Gamma)\NN$. 
Assume that one (hence all) of the equivalent conditions (i)-(iv) in Theorem \ref{thm:index} hold with $\kappa=0$. 
Then the asymptotics \eqref{e:asymp} holds with the constants 
\begin{align}\label{G.con0}
G &= \exp\left(\frac{1}{2\pi} \int_0^{2\pi} (\log c)(e^{ix})\, dx\right),
\\ \label{Om.con0}
\Omega &= -\sum_{k=1}^R \tr\left( (L_k)^2\right),
\end{align}
where the $L_k$'s are given by \eqref{f.Lk} and the function $c$ is defined in 
\eqref{fct.c}.

Moreover, the constant $E$ is nonzero if and only if both operators
$T(\phi)$ and $T(\phi\iv)$ are invertible on $(\ell^2)^N$.
\end{corollary}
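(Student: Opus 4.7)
The plan is to deduce Corollary \ref{c.main-2} directly from Theorem \ref{main-3} by (i) rewriting the constants $G$ and $\Omega$ from \eqref{G.con1}--\eqref{Om.con1} into the form \eqref{G.con0}--\eqref{Om.con0}, and (ii) analyzing when the operator determinant appearing in \eqref{f.constE1} vanishes. The factorization \eqref{f.prod0} together with the similarity $B_k\sim L_k$ (mentioned right after the theorem and recorded as \eqref{LkBkSim}) is the tool for the first task, while the Fredholmness/invertibility criteria of Theorem \ref{thm:index} and Proposition \ref{p.T-Fred} are the tools for the second.

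For the rewriting: from $u_{B,\tau}(t)=\exp(iB\arg(-t/\tau))$ I would deduce $\det u_{B,\tau}(t)=\exp(i\tr(B)\arg(-t/\tau))=u_{\tr B,\tau}(t)$, so $\det\phi_k=u_{\tr B_k,\tau_k}$. Since $B_k\sim L_k$ gives $\tr B_k=\tr L_k=\beta_k$, taking determinants in \eqref{f.prod0} yields
\[
\det\phi_0(t)=\det\phi(t)\prod_{k=1}^R u_{\beta_k,\tau_k}(t)^{-1}=c(t),
\]
so the expression for $G$ in \eqref{G.con1} becomes \eqref{G.con0}. Note that $\det\phi_0$, and hence $c$, is continuous and has winding number zero (both are guaranteed by Theorem \ref{main-3}), so the logarithm used in \eqref{G.con0} makes sense. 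The same similarity $B_k\sim L_k$ gives
\[
\sum_{j=1}^N(\beta_k^{(j)})^2=\tr(B_k^2)=\tr(L_k^2),
\]
which turns \eqref{Om.con1} into \eqref{Om.con0}.

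For the nonvanishing part, I would first observe that the Barnes $G$-function from \eqref{Barnes} has zeros precisely at $z=0,-1,-2,\dots$. Since $\Re\beta_k^{(j)}\in(-1/2,1/2)$, neither $1+\beta_k^{(j)}$ nor $1-\beta_k^{(j)}$ is a non-positive integer, so every factor $G(1\pm\beta_k^{(j)})$ in \eqref{f.constE1} is nonzero. Hence $E=0$ if and only if the operator determinant itself is zero. Writing the argument of that determinant as $A=T(\phi)\,M\,T(\phi^{-1})$, where $M=T(\phi_R)^{-1}\cdots T(\phi_1)^{-1}T(\phi_1^{-1})^{-1}\cdots T(\phi_R^{-1})^{-1}$ is invertible by Theorem \ref{main-3}, the operator $A$ is of the form $I+\text{(trace class)}$ and is Fredholm of index zero (since $T(\phi)$ and $T(\phi^{-1})$ are Fredholm of index zero by Theorem \ref{thm:index} combined with Proposition \ref{p.T-Fred}). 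Therefore $\det A\neq 0$ is equivalent to invertibility of $A$. The implication from invertibility of $T(\phi)$ and $T(\phi^{-1})$ to invertibility of $A$ is trivial; for the converse I would use that $T(\phi^{-1})u=0$ forces $Au=0$, and $\operatorname{Range}(A)\subseteq\operatorname{Range}(T(\phi))$, so invertibility of $A$ forces $T(\phi^{-1})$ injective and $T(\phi)$ surjective, each of which is enough for invertibility in the index-zero Fredholm setting.

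No step looks like a serious obstacle: the algebraic identifications of $G$ and $\Omega$ are routine once the factorization is in hand, and the $E\neq 0$ dichotomy is a standard consequence of the Fredholm index-zero machinery. The one place that must be handled with care is verifying that the trace-class/operator-determinant framework genuinely applies to $A$ (so that the equivalence ``$\det A\neq 0\iff A$ invertible'' is legitimate), but this is already established in Theorem \ref{main-3}.
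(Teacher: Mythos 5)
Your proposal is correct and follows essentially the same route as the paper's own proof: deduce the rewriting of $G$ and $\Omega$ from $c=\det\phi_0$ and the similarity $B_k\sim L_k$, observe the Barnes factors are nonzero for real parts in $(-1/2,1/2)$, and reduce the $E\neq 0$ criterion to invertibility of $T(\phi)$ and $T(\phi^{-1})$ via the Fredholm index-zero framework. The only difference is that you spell out the final equivalence (sandwiching $T(\phi)\,M\,T(\phi^{-1})$ with $M$ invertible and comparing kernels and ranges), which the paper dismisses as ``easily seen''.
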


\begin{remark}\label{main-rem}
It is clearly desirable to know whether the constant $E$ vanishes or not, since only if it is nonzero the actual asymptotic behavior of $\det T_n(\phi)$ is
given by \eqref{e:asymp}. We will mention here two sufficient conditions for the invertibility of $T(\phi)$ on $(\ell^2)^N$.

If $\phi\in (L^\iy(\T))\NN$ is {\em sectorial}, then $T(\phi)$ is invertible on $(\ell^2)^N$. The function $\phi(t)$ being sectorial means that 
there exist invertible matrices $B,C\in\C\NN$ and some $\delta>0$ such that 
$$
\Re \langle B\phi(t)Cx,x\rangle \ge \delta \|x\|^2
$$
for all $x\in \C^N$ and for a.e.~$t\in\T$. Here the Euclidean inner product and norm in $\C^N$ are used. This condition is equivalent to the 
existence of (possibly different) invertible $B,C\in\C\NN$ and $\delta>0$ such that 
$$
\| I_N - B\phi(t)C\|_{\C\NN} \le 1-\delta
$$
for a.e.~$t\in\T$. For details on the notion of sectoriality and its generalizations we refer to \cite[Section~3.1]{BS06}.
Notice that for instance, strictly positive definite matrix functions $\phi$ are sectorial.

The other, somewhat peculiar sufficient condition we want to mention is the following.
If $\phi\in (L^\iy(\T))\NN$ satisfies the condition 
$$
(\phi(t))^* \phi(t^{-1})=I_N,\qquad \mbox{for a.e.~}t\in\T,
$$
then the kernel of both $T(\phi)$ and its adjoint $(T(\phi))^*$ on $(\ell^2)^N$ are trivial.
Here $(\phi(t))^*=(\overline{\phi(t)})^T$ is the complex adjoint function. This result is due to 
Voronin~\cite{V07} (see also~\cite{ES} for further details and generalizations). If, in addition, $T(\phi)$ is Fredholm, then we can conclude that $T(\phi)$ is invertible on $(\ell^2)^N$. 
Note that Fredholm criteria are known for piecewise continuous functions $\phi$.
\end{remark}

\section{Preliminaries and auxiliary results}\label{preliminaries}

\subsection{Properties of $\boldsymbol{I}$-regularity and $\boldsymbol{I}$-winding number}
\label{sec:I-reg}

Recall that the notions of $I$-regularity and the $I$-winding number have been defined in Section \ref{sec:main-results} for functions $\phi\in PC(\T;\Gamma)\NN$, see in particular \eqref{f.Lk} and \eqref{f.wind}. Here $I\subset\R$ is a subset with the property that it does not contain any two numbers whose difference is a nonzero integer.
In this paper, only $I=(-1/2,1/2)$ is of interest. The basic result about these notions are stated next.

\begin{proposition}\label{p1.1}
Let $\phi\in PC(\T;\Gamma)\NN$. Then
\begin{itemize}
\item[(i)]
$\wind(\phi;I)$ is a well-defined integer for any $I$-regular function $\phi$.
\end{itemize}
If $I$ is an open set, then 
\begin{itemize}
\item[(ii)]
$\wind(\phi;I)$ is invariant under continuous deformations of $I$-regular functions.
\end{itemize}
If $0\in I$, then 
\begin{itemize}
\item[(iii)]
$\wind(\phi_1\phi_2;I)=\wind(\phi_1;I)+\wind(\phi_2;I)$ provided $\phi_1$ and $\phi_2$ are $I$-regular functions having no discontinuities in common,
\\
\item[(iv)]
every invertible $\phi\in C(\T)^{N\times N}$ is $I$-regular and 
$$
\wind(\phi;I)=\wind(\det \phi),
$$
i.e., the $I$-winding number of $\phi$ coincides with the usual winding number \eqref{wind.scalar} of the scalar function $\det \phi$.
\end{itemize}
\end{proposition}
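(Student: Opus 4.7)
The plan is to derive all four assertions directly from the definition~\eqref{f.wind}, with one unifying tool: the identity $\det e^{2\pi i L_k} = e^{2\pi i\,\tr(L_k)}$, equivalently
$$
\det\bigl(\phi(\tau_k+0)^{-1}\phi(\tau_k-0)\bigr) = e^{2\pi i\,\tr(L_k)},
$$
which translates matrix jump data into the scalar jump of $\det\phi$ at $\tau_k$ and so couples the two sums in~\eqref{f.wind}.

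For (i) I would fix continuous branches of $\log\det\phi(t)$ on each arc $(\tau_k,\tau_{k+1})$; the identity above forces each jump $\log\det\phi(\tau_k-0) - \log\det\phi(\tau_k+0)$ to lie in $2\pi i\,\tr(L_k) + 2\pi i\Z$. Summing arc-increments and jumps once around $\T$ must produce $2\pi i\wind(\det\phi)$, and rearranging gives $\wind(\phi;I) = \wind(\det\phi) - M$ for some $M\in\Z$, simultaneously proving integrality. For (ii), along a continuous path $\phi_s$ of $I$-regular symbols the matrix $M_k(s) := \phi_s(\tau_k+0)^{-1}\phi_s(\tau_k-0)$ varies continuously in the invertible matrices, and because $I$ is open the spectral condition selecting $L_k(s)$ survives small perturbations. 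I would make this precise through the holomorphic functional calculus formula
$$
L_k(s) = \frac{1}{2\pi i}\oint_{\Lambda}\log(\zeta)(\zeta I - M_k(s))^{-1}\,d\zeta,
$$
with a contour $\Lambda$ enclosing $\sigma(M_k(s))$ and a branch of $\log$ compatible with the $I$-condition; this yields continuity of $L_k(s)$ (and of $\tr L_k(s)$), so the integer-valued $\wind(\phi_s;I)$ must be constant in $s$.

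For (iii) the key computation is that at a point $\tau$ where only $\phi_1$ jumps,
$$
(\phi_1\phi_2)(\tau+0)^{-1}(\phi_1\phi_2)(\tau-0) = \phi_2(\tau)^{-1}e^{2\pi i L_k^{(1)}}\phi_2(\tau),
$$
so the $L$-matrix for $\phi_1\phi_2$ is similar to $L_k^{(1)}$; spectrum (and hence the $I$-constraint) and trace are preserved. At a jump of $\phi_2$ alone the cancellation gives the $L$-matrix equal to $L_j^{(2)}$ outright. Combined with $d\log\det(\phi_1\phi_2) = d\log\det\phi_1 + d\log\det\phi_2$ on the common refinement of arcs, additivity follows. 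For (iv) any invertible $\phi\in C(\T)^{N\times N}$ can be viewed with $\Gamma=\emptyset$, or alternatively with fictitious jumps at which $L_k = 0$ is a valid choice because $0\in I$; either way the first sum vanishes and the second collapses to $\frac{1}{2\pi i}\oint d\log\det\phi = \wind(\det\phi)$.

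The one genuinely delicate step is the continuity of $L_k(s)$ in part (ii): as eigenvalues of $M_k(s)$ move and perhaps coalesce into larger Jordan blocks, I must justify a uniform choice of contour $\Lambda$ and branch cut on a neighbourhood of each $s$, which is exactly where the openness hypothesis on $I$ is used. Everything else is bookkeeping once the determinantal jump identity is in hand.
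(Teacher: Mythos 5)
Your proposal is correct and follows essentially the same route as the paper: the determinant identity $\det e^{2\pi iL_k}=e^{2\pi i\operatorname{trace}(L_k)}$ together with a telescoping sum for (i), continuity of the logarithms plus discreteness of $\Z$ for (ii), the observation that at each jump one of the three $L$-matrices vanishes (using $0\in I$) and the others are similar or equal for (iii), and $L_k=0$ for (iv). The only quibble is a sign/notation slip in (i) — the once-around sum of arc increments plus jumps telescopes to $0$, not to ``$2\pi i\wind(\det\phi)$'' (which is not defined when $\phi$ is discontinuous) — but the underlying argument and conclusion are the same as the paper's.
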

\begin{proof}
(i): It is straightforward to show that the exponential of $2\pi i$ times \eqref{f.wind} evaluates to one. 
Note that the matrix logarithms and the continuous increments are uniquely defined.

(ii): All quantities entering \eqref{f.wind}, in particular the matrix logarithms, depend continuously on $\phi$ in the $L^\infty$-norm if $I$ is open. Notice that this is no longer the case if we consider, e.g., the half-open interval
$[-1/2,1/2)$.

(iii): The second term in \eqref{f.wind} obviously splits additively if we apply it to the product $\phi_1\phi_2$.
In view of the first term, let $L_k$ be the matrix \eqref{f.Lk} for the product $\phi_1\phi_2$, and let
$L_k^{(1)}$ and $L_k^{(2)}$ be the corresponding matrices for $\phi_1$ and $\phi_2$. Assume that, say, $\phi_1$ is continuous at 
$\tau_k$. Then $L_k^{(1)}=0$ because $0\in I$ and the matrices $L_k$ and $L_k^{(2)}$ are similar to each other.
This implies that $\tr L_k=\tr L_k^{(1)}+\tr L_k^{(2)}$.

(iv): Note that the corresponding $L_k=0$ because $0\in I$.

Notice that statements (iii)-(iv) may no longer be true if one considers, e.g., an open interval $I$ not containing $0$.
\end{proof}

For $B\in\C\NN$ and $\tau\in\T$ we have introduced the functions $u_{B,\tau}$ in \eqref{u-B}
as a generalization of the scalar functions $u_{\beta,\tau}$ defined in \eqref{u-beta}. 
The functions are smooth on $\T\setminus\{\tau\}$ and have a possible jump at
$t=\tau$. In fact, they belong to $PC(\T,\{\tau\})\NN$ and the definition can be restated as
\begin{align*}
u_{B,1}(e^{ix}) &=
\exp((x-\pi)i B),\qquad 0<x<2\pi,
\end{align*}
and
\begin{align*}
u_{B,\tau}(t)=u_{B,1}(t/\tau),\qquad t \in\T.
\end{align*}
In particular the one-sided limits at the jump $t=\tau$ evaluate to
$$
u_{B,\tau}(\tau+0)=\exp(-\pi i B), \qquad u_{B,\tau}(\tau-0)=\exp(\pi i B).
$$

In the scalar case, representations of $\phi$ as a product \eqref{f.prod_sc}
play a role for the description of the asymptotics of $\det T_n(\phi)$. 
We will now generalize this product representation to the matrix case. 

\begin{proposition}\label{p.prod}
Let $I\subset\R$ be an open interval of length at most one and assume that $0\in I$.
Suppose $\phi\in PC(\T;\Ga)\NN$ is $I$-regular. 
Then $\phi$ admits a representation of the form
\begin{align}\label{f.prod1}
\phi(t)&=\phi_0(t)u_{B_1,\tau_1}(t)\cdots u_{B_R,\tau_R}(t)
\end{align}
where $\phi_0\in C(\T)\NN$ is an invertible function and 
the real parts of all the eigenvalues of $B_k$ lie in the interval $I$.
Moreover,
$$
\wind(\phi;I)=\wind(\det \phi_0).
$$
The matrices $B_1,\dots,B_R$ and the function $\phi_0$ are uniquely determined by $\phi$ and $I$.
\end{proposition}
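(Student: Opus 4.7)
The plan is to build the factorization \eqref{f.prod1} recursively from right to left, peeling off one jump at a time by a suitable choice of $B_k$, and then verify each required property. The key observation is that at the point $\tau_k$, only the factor $u_{B_k,\tau_k}$ in a hypothetical product $\psi:=u_{B_1,\tau_1}\cdots u_{B_R,\tau_R}$ has a discontinuity. Writing
$$
C_k := u_{B_{k+1},\tau_{k+1}}(\tau_k) \cdots u_{B_R,\tau_R}(\tau_k),
$$
which depends only on $B_{k+1},\dots,B_R$, and using $u_{B,\tau}(\tau\pm 0)=e^{\mp\pi i B}$, one finds
$$
\psi(\tau_k+0)^{-1}\psi(\tau_k-0) \;=\; C_k^{-1} e^{2\pi i B_k} C_k \;=\; e^{2\pi i C_k^{-1} B_k C_k}.
$$
Matching this with $\phi(\tau_k+0)^{-1}\phi(\tau_k-0) = e^{2\pi i L_k}$ from $I$-regularity suggests forcing $C_k^{-1} B_k C_k = L_k$, i.e., $B_k = C_k L_k C_k^{-1}$.

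I would use this as a downward recursion: start with $B_R = L_R$ (since $C_R$ is the empty product $I$), and then compute $B_{R-1},\dots,B_1$ in turn. Since each $B_k$ is similar to $L_k$, its eigenvalues coincide with those of $L_k$, so their real parts indeed lie in $I$. Setting $\phi_0 := \phi\cdot u_{B_R,\tau_R}^{-1}\cdots u_{B_1,\tau_1}^{-1}$, the matched-jump construction makes $\phi_0$ continuous at every $\tau_k$; on $\T\setminus\Ga$ it is invertible because $\phi$ is (by $I$-regularity condition (a)) and each $u_{B_j,\tau_j}$ takes values in $GL(N,\C)$ (being a matrix exponential). At each $\tau_k$ the common one-sided limit $\phi_0(\tau_k)$ is a product of invertible matrices, hence invertible, giving $\phi_0\in C(\T)\NN$ invertible as required.

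To establish the winding-number identity, I first show that $\wind(u_{B,\tau};I)=0$ for every $B$ whose eigenvalue real parts lie in $I$. Applied to the single-jump case ($R=1$) of the definition \eqref{f.wind}, the matrix-logarithm term contributes $-\tr(B)$, while $\det u_{B,\tau}(t)=\exp(i\,\tr(B)\,\arg(-t/\tau))$ yields a continuous increment of $\log\det u_{B,\tau}$ along $\T\setminus\{\tau\}$ equal to $2\pi i\,\tr(B)$, contributing $+\tr(B)$; the two terms cancel. Since $\phi_0$ and the $u_{B_k,\tau_k}$ have pairwise disjoint singularities, iterating Proposition~\ref{p1.1}(iii) gives $\wind(\phi;I) = \wind(\phi_0;I) + \sum_{k=1}^R \wind(u_{B_k,\tau_k};I) = \wind(\phi_0;I)$, and Proposition~\ref{p1.1}(iv) identifies the last quantity with $\wind(\det\phi_0)$.

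For uniqueness, suppose $\phi = \phi_0 \psi = \phi_0' \psi'$ with $\psi' = u_{B_1',\tau_1}\cdots u_{B_R',\tau_R}$ also satisfying the eigenvalue constraint. The jump computation at each $\tau_k$ forces $e^{2\pi i C_k^{-1} B_k C_k} = e^{2\pi i (C_k')^{-1} B_k' C_k'}$. The hypothesis that $I$ is an open interval of length at most one makes the map $B\mapsto e^{2\pi i B}$ injective on matrices whose eigenvalue real parts lie in $I$, so $C_k^{-1}B_kC_k = (C_k')^{-1}B_k'C_k'$. A downward induction (base $C_R=C_R'=I$, hence $B_R=L_R=B_R'$) yields $B_k=B_k'$ for all $k$, and thus $\phi_0=\phi_0'$. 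The main technical obstacle I anticipate is the matrix-logarithm bookkeeping: verifying both that the recursive choice $B_k=C_kL_kC_k^{-1}$ lands in the correct spectral strip (handled by similarity) and that the openness and length-at-most-one assumptions on $I$ are precisely what is needed to promote eigenvalue-level injectivity of the exponential to matrix-level injectivity for the uniqueness argument.
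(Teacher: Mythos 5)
Your proposal is correct and follows essentially the same route as the paper: both peel off the jump factors from the right (the paper packages this as an induction on $R$, you as an explicit downward recursion $B_k=C_kL_kC_k^{-1}$, and in fact $C_k$ coincides with the paper's $S_k$ in \eqref{LkBkSim}). Two small stylistic differences: you compute $\wind(u_{B,\tau};I)=0$ directly from \eqref{f.wind} where the paper uses the deformation argument of Proposition~\ref{p1.1}(ii), and you spell out the uniqueness claim (via injectivity of $B\mapsto e^{2\pi iB}$ on matrices with eigenvalue real parts in $I$, i.e.\ uniqueness of the constrained matrix logarithm) whereas the paper leaves it implicit in the forced nature of the construction; both variants are sound.
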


Notice that due to the non-commutativity in the matrix case, the order of the factors in the product \eqref{f.prod1} matters. As already noted, the $B_k$'s are matrices similar to the $L_k$'s defined in \eqref{f.Lk}.

\begin{proof}
We prove the proposition by induction on the number $R$ of jump discontinuities $\tau_1,\dots,\tau_R$.
In case $R=0$ there is nothing to prove. We just take $\phi_0=\phi$ and observe Proposition \ref{p1.1}(iv).

 Now let $R\ge 1$ and assume that the statement has been proven for $R-1$.
Assume that $\phi$ has discontinuities at $\Ga=\{\tau_1,\dots,\tau_R\}$. We are going to show that we can write
$$\phi(t)=\psi(t) u_{B_R,\tau_R}(t)$$ where $\psi\in PC(\T;\Ga\setminus\{\tau_R\})$ is $I$-regular and $\wind(\psi;I)=\wind(\phi;I)$. This is all that is needed to apply the induction 
hypothesis to $\psi$ and finish the proof.

By assumption of $\phi$ being $I$-regular, we can find a matrix logarithm  
$$L_R=\frac{1}{2\pi i}\log\left( \phi(\tau_R+0)^{-1}\phi(\tau_R-0)\right)$$
with the real parts of all of its eigenvalues lying in $I$.
We put $B_R=L_R$ and observe 
$$
u_{B_R,\tau_R}(\tau_R+0)^{-1} u_{B_R,\tau_R}(\tau_R-0)=\exp(2\pi i B_R).
$$
Hence
$$
\phi(\tau_R+0)^{-1}\phi(\tau_R-0)=\exp(2\pi i L_R)= u_{B_R,\tau_R}(\tau_R+0)^{-1} u_{B_R,\tau_R}(\tau_R-0)
$$
and therefore
$$
\phi(\tau_R-0)u_{B_R,\tau_R}(\tau_R-0)^{-1}   = \phi(\tau_R+0) u_{B_R,\tau_R}(\tau_R+0)^{-1}.
$$
Introduce $\psi(t)=\phi(t) u_{B_R,\tau_R}(t)^{-1}$. By the preceding equality, this function is continuous at $t=\tau_R$.
In fact, $\psi\in PC(\T;\Ga\setminus\{\tau_R\})\NN$.
Evaluating the corresponding ``jump ratios'' for $\phi$ and $\psi$ at the points $\tau_1,\dots,\tau_{R-1}$ one notices that 
they are similar to each other, 
$$
\phi(\tau_k+0)^{-1}\phi(\tau_k-0)= T_k^{-1} \psi(\tau_k+0)^{-1}\psi(\tau_k-0) T_k,\qquad   T_k=u_{B_R,\tau_R}(\tau_k),
$$
which is due to the fact that $u_{B_R,\tau_R}(t)$ is continuous at $\tau_1,\dots,\tau_{R-1}$.
As a consequence the matrix logarithm of $\psi(\tau_k+0)^{-1}\psi(\tau_k-0)$ can be chosen to be similar to the matrix logarithm of
$\phi(\tau_k+0)^{-1}\phi(\tau_k-0)$, $1\le k<R$. This proves that $\psi$ is $I$-regular as well. 

Finally, we claim that $\wind(u_{B_R,\tau_R};I)=0$. Indeed, use  Proposition \ref{p1.1}(i)-(ii)
and employ a deformation argument to show that the map $\lambda\in[0,1]\mapsto \wind(u_{\lambda B_R,\tau_R};I)$ is constant. 
Now Proposition \ref{p1.1}(iii) implies $\wind(\psi;I)=\wind(\phi;I)$. Thus we have shown all that was needed
to apply the induction hypothesis.
\end{proof}

In principle, given $\phi$ and the corresponding matrix logarithms $L_k$, one can derive formulas expressing the $B_k$'s 
in terms of the $L_k$'s (and vice versa). These formulas show the similarity of these matrices explicitly. Indeed, starting with the product representation \eqref{f.prod1} and using the underlying definitions, we see that 
$$
e^{2\pi i L_k}=\phi(\tau_k+0)^{-1}\phi(\tau_k-0)=S_k^{-1} \phi_k(\tau_k+0)^{-1}\phi_k(\tau_k-0)S_k
=S_k^{-1}e^{2\pi i B_k} S_k,
$$
for $1\le k\le R$, thus
\be\label{LkBkSim}
L_k= S_k^{-1} B_k S_k
\ee
with
$$
S_k=\phi_{k+1}(\tau_k)\cdots\phi_{R}(\tau_k)=(-\tau_k/\tau_{k+1})^{B_{k+1}}\cdots(-\tau_k/\tau_{R})^{B_{R}}.
$$
While this shows that $B_R=L_R$, the relationship between all other terms becomes increasingly complicated and may be of little use practically.

It is also possible to prove the existence of product representations of similar kinds in which the order of the ``jump functions'' $u_{B_k,\tau_k}$ is permuted and/or where the continuous function $\phi_0$ occurs on the right  instead of on the left.
Due to non-commutativity, the $B_k$'s may have to be replaced by similar ones and $\phi_0$ may be different as well. For instance, under the same assumptions one can prove the existence of a product representation
$$
\phi(t)=u_{\wh{B}_R,\tau_R}(t)\cdots u_{\wh{B}_1,\tau_1}(t)\wh{\phi}_0(t)
$$
with $\wh{B}_k\cong L_k\cong B_k$. For the purpose of this paper, we could have worked with any such representation, but for sake of
definiteness we will focus on \eqref{f.prod1}.

\begin{corollary}\label{p.prod.cor}
Suppose $\phi\in PC^{1+\eps}(\T;\Gamma)^{N\times N}$ admits a product representation \eqref{f.prod1}.
Then the factor $\phi_0\in  \CpwTG^{N\times N}$.
\end{corollary}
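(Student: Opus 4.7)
The plan is to read off $\phi_0$ from the product representation and verify that each factor on the right of the resulting formula lies in the required space.

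First I would write
\[
\phi_0(t) = \phi(t)\, u_{B_R,\tau_R}(t)^{-1} \cdots u_{B_1,\tau_1}(t)^{-1},
\]
noting that $u_{B,\tau}(t)^{-1} = u_{-B,\tau}(t)$ directly from the definition \eqref{u-B}. Since $PC^{1+\eps}(\T;\Gamma)$ is a Banach algebra (as stated right after Definition \ref{PC1+e_def}) and since entrywise this extends in the obvious way to $PC^{1+\eps}(\T;\Gamma)^{N\times N}$, it suffices to show that each $u_{B_k,\tau_k}$ and each $u_{-B_k,\tau_k}$ belongs to this matrix algebra; then the given hypothesis $\phi \in PC^{1+\eps}(\T;\Gamma)^{N\times N}$ makes the whole product a member of $PC^{1+\eps}(\T;\Gamma)^{N\times N}$.

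Next I would verify the smoothness of $u_{B,\tau}$. From $u_{B,1}(e^{ix}) = \exp((x-\pi)iB)$ for $0 < x < 2\pi$, the function $u_{B,\tau}$ has entries that are $C^\infty$ on $\T \setminus \{\tau\}$ (even real-analytic on each of the two arcs), with the only discontinuity at $t = \tau$. In particular, on each arc $(\tau_k, \tau_{k+1})$ of $\T \setminus \Gamma$, $u_{B,\tau}$ has a uniformly H\"older continuous derivative of any order $\eps \in (0,1)$. Hence $u_{\pm B_k, \tau_k} \in PC^{1+\eps}(\T;\{\tau_k\})^{N\times N} \subset PC^{1+\eps}(\T;\Gamma)^{N\times N}$. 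This step is routine; there is no real obstacle, since the only candidate singularity of $u_{B_k,\tau_k}$ lies in $\Gamma$.

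Finally, Proposition \ref{p.prod} already gives $\phi_0 \in C(\T)^{N\times N}$. Combining this with the membership $\phi_0 \in PC^{1+\eps}(\T;\Gamma)^{N\times N}$ established above, and recalling the definition \eqref{f.PC1+e} of $\CpwTG$, we conclude $\phi_0 \in \CpwTG^{N\times N}$, which completes the proof. The only minor point to keep in mind is that $C^{1+\eps}$ on each arc does not force continuity across the points of $\Gamma$; that is exactly why we need to invoke Proposition \ref{p.prod} separately, the smoothness half coming from the product formula and the continuity half coming from the construction of $\phi_0$ in that proposition.
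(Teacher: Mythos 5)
Your proof is correct and takes essentially the same route as the paper: write $\phi_0$ as $\phi$ times the factors $u_{-B_k,\tau_k}$, invoke the algebra property of $PC^{1+\eps}(\T;\Gamma)$, and then combine with continuity to land in $\CpwTG^{N\times N}$. The only cosmetic difference is that you spell out the smoothness of $u_{B,\tau}$ away from its jump (which the paper takes for granted) and attribute continuity of $\phi_0$ to Proposition \ref{p.prod} rather than noting it is already built into the hypothesized representation \eqref{f.prod1}; neither affects the validity of the argument.
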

\begin{proof}
The issue is only the smoothness of $\phi_0$. By assumption,  $\phi_0$ in the product representation \eqref{f.prod1} is continuous.
On the other hand, $\phi_0$ can be expressed as a product of $\phi$ and 
the functions $(u_{B_k,\tau_k})^{-1}=u_{-B_k,\tau_k}$. Since each of these factors is in $PC^{1+\eps}(\T;\Gamma)^{N\times N}$
and since $PC^{1+\eps}(\T;\Gamma)$ is an algebra, it follows that $\phi_0$ belongs to $PC^{1+\eps}(\T;\Gamma)^{N\times N}$ as well.
Now it remains to apply \eqref{f.PC1+e}.
\end{proof}

\subsection{Proof of Theorem \ref{thm:index}}\label{index proof}

We will now give the proof of Theorem \ref{thm:index}. For the issue of Fredholmness we rely on 
the known criteria as presented, for instance, in \cite[Sect.~XXV.3]{GGK}, Theorem 3.1, in particular.

Indeed, for $\phi\in PC(\T;\Gamma)\NN$ the Fredholmness of $T(\phi)$ on $(\ell^2)^N$ is equivalent to 
$$
\det \widehat{\Phi}(t,\mu)\neq 0
$$
for all $t\in\T$ and $\mu\in[0,1]$ where 
$$
\widehat{\Phi}(t,\mu) = \mu \phi(t+0)+(1-\mu)\phi(t-0)
$$
is an auxiliary function. For $t\in \T\setminus\Gamma$, the function $\widehat{\Phi}(t,\mu)=\phi(t)$ and the above 
condition amounts to the invertibility of $\phi(t)$ for all $t\in\T\setminus\Gamma$. 
On the other hand, for $t\in \Gamma$, we have $\widehat{\Phi}(t,0)=\phi(t+0)$ and
$\widehat{\Phi}(t,1)=\phi(t-0)$, which both have to be invertible. Furthermore, 
$\det \widehat{\Phi}(t,\mu)\neq0$ for all $\mu\in(0,1)$ if and only if 
$\det (\mu I_N + (1-\mu) S)\neq0$ for all $\mu\in(0,1)$ where $S=\phi(t+0)^{-1}\phi(t-0)$.
This means that none of the eigenvalues of $S$ can be a negative real number.
However, this is equivalent to saying that one can choose a matrix logarithm of $S$ such that all of the eigenvalues 
of $L=\frac{1}{2\pi i} \log S$ are contained in the interval $I=(-1/2,1/2)$. Combining all this shows that the 
Fredholmness of $T(\phi)$ is equivalent to $\phi$ being $I$-regular.

It is straightforward to check that in the case of $I=(-1/2,1/2)$, the $I$-regularity of $\phi$ is equivalent to the
$I$-regularity of $\tilde{\phi}$ where $\tilde{\phi}(t)=\phi(t^{-1})$.

Thus the equivalence of (i)-(iv) in Theorem \ref{thm:index} regarding Fredholmness is established.

Now we turn to the formula for the Fredholm index. One possiblity to prove this quickly is to use the product representation \eqref{f.prod1} along with a deformation argument. As shown in Proposition \ref{p.prod} such a
 product representation exist for every $I$-regular function $\phi$. We use it here with $I=(-1/2,1/2)$.

Given that product representation, consider a parameter $\lambda\in[0,1]$ and introduce the family of functions 
$$
\phi_\lambda(t) = \phi_0(t) u_{\lambda B_1,\tau_1}(t) \cdots u_{\lambda B_R,\tau_R}(t).
$$
Note that for $\lambda=0$ we obtain indeed the function $\phi_0\in C(\T)\NN$ appearing in the original product representation,
while for $\lambda=1$ we have $\phi_1=\phi$. The map $\lambda\in [0,1]\mapsto L^\infty(\T)\NN$
is continuous. Furthermore, due to the conditions on the $B_k$'s that the real parts of their eigenvalues lie in 
$I$, it follows that the same holds for the $\lambda B_k$'s. From this it follows that the functions $\phi_\lambda$ are $I$-regular. By what have shown above that means that all Toeplitz operators $T(\phi_\lambda)$ are Fredholm on
$(\ell^2)^N$. Therefore, when we deform $T(\phi_\lambda)$ along $\lambda\in [0,1]$, the Fredholm index remains
constant and this implies that 
$$
\ind T(\phi) =  \ind T(\phi_1) = \ind T(\phi_0).
$$
Furthermore, by Proposition \ref{p1.1}(ii) the $I$-winding number remains invariant under continuous
deformation of $I$-regular functions, which implies that 
$$
\wind(\phi;I)=\wind(\phi_1;I)=\wind(\phi_0;I) =\wind (\det \phi_0),
$$
where the latter is inferred from  Proposition \ref{p1.1}(iv). To complete the argument we remark that 
for every continuous and invertible function $\phi_0\in C(\T)\NN$ the Fredholm index is given by
$$
\ind T(\phi_0) = -\wind(\det \phi_0),
$$
the proof of which is not completely trivial (see \cite[Sect. XXIII.5]{GGK}). This proves that 
$$
\ind T(\phi) = -\wind(\phi;I).
$$
The equality of $\wind(\phi;I)= \wind(c)$ follows because the definition of $c$ implies that 
$c=\det \phi_0$. Finally, we can  conclude by analogy that $\ind T(\tilde{\phi}) = \wind(\phi;I)$  noting that 
$$
\ind T(\tilde{\phi})=-\wind(\tilde{\phi};I)=-\wind(\det \tilde{\phi}_0)=\wind(\det \phi_0)=\wind(\phi;I).
$$ 
In summary we have seen that all the expressions for $\kappa$ in (i)-(iv) of Theorem \ref{thm:index} coincide.
This concludes the proof.
 \hfill $\Box$

\subsection{Operator-theoretic preliminaries}

It is well-known and not difficult to prove that Toeplitz and Hankel operators satisfy the fundamental identities
\be\label{T1} T(ab) = T(a)T(b) + H(a)H(\tilde b)
\ee
and
\be\label{H1}
H(ab) = T(a)H(b) + H(a)T(\tilde b) .
\ee
In the last two identities and in what follows 
\be\label{tilde}
\tilde{b} (e^{i\theta}) = b(e^{-i\theta}) .
\ee
It is worthwhile to point out that these identities imply that 
\be\label{T2}
T(abc)=T(a)T(b)T(c),\qquad H(ab\tilde{c})=T(a)H(b)T(c)
\ee
for $a,b,c,\in L^\iy(\T)\NN$ if $a_n=c_{-n}=0$ for all $n>0$.

We define the projection $P_{n}$  by 
\[ 
P_{n} : \{x_k\}_{k=0}^\iy\in (\ell^2)^N \mapsto \{y_k\}_{k=0}^\iy\in (\ell^2)^N,\qquad
y_k=\left\{\ba{cc} x_k & \mbox{if } k<n\\ 0 & \mbox{if } k\ge n,\ea\right.
\]
and put $Q_n=I-P_n$. We remark that the image of $P_n$ can be identified with $(\C^n)^N\cong (\C^N)^n\cong \C^{nN}$.
Below we will identify operators of the form $P_nAP_n$ with $nN\times nN$ matrices. On the other hand, we will also think
of $nN\times nN$ matrices $A_n$ as linear operators on $(\ell^2)^N$.

In addition to the projections $P_n$ and $Q_n$ we need
\[ 
W_{n} : \{x_k\}_{k=0}^\iy\in (\ell^2)^N \mapsto \{y_k\}_{k=0}^\iy\in (\ell^2)^N,\qquad
y_k=\left\{\ba{cc} x_{n-1-k} & \mbox{if } k<n\\ 0 & \mbox{if } k\ge n.\ea\right.
\]
Note that $W_n^2=P_n$ and 
\begin{equation}\label{WTW}
W_n T_n(a) W_n= T_n(\ta).
\end{equation}

The following useful lemmas will be needed in what follows. 

\begin{lemma}[{\cite[Prop.~2.1]{Wi76}}]\label{l1.1b}
Let $B$ be a trace class operator and suppose that $A_n$ and $C_n$
are sequences such that $A_n\to A$ and $C_n^*\to C^*$ strongly. Then 
$A_nBC_n\to ABC$ in the trace class norm.
\end{lemma}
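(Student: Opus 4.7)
The plan is to combine a standard trace-norm approximation with a decomposition that splits the approximation of $A_n B C_n$ into two pieces, each of which can be handled by reducing to the key fact that strong convergence times a trace class operator gives trace-norm convergence.

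First I would note that since $A_n \to A$ strongly and $C_n^* \to C^*$ strongly, the uniform boundedness principle gives uniform bounds $\sup_n \|A_n\| < \infty$ and $\sup_n \|C_n\| = \sup_n \|C_n^*\| < \infty$. Then I would write the telescoping decomposition
\begin{equation*}
A_n B C_n - A B C \;=\; (A_n - A) B C \;+\; A_n B (C_n - C).
\end{equation*}
The task reduces to showing that each summand tends to $0$ in trace norm.

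The heart of the argument is the following standard claim: if $S_n \to 0$ strongly with $\sup_n \|S_n\| < \infty$ and $T$ is trace class, then $\|S_n T\|_1 \to 0$. To prove this, I would fix $\varepsilon > 0$, approximate $T = F + R$ where $F$ has finite rank and $\|R\|_1 < \varepsilon$, write $F = \sum_{j=1}^{k} \langle \,\cdot\,, f_j\rangle e_j$, and estimate
\begin{equation*}
\|S_n T\|_1 \;\le\; \|S_n F\|_1 + \|S_n\| \cdot \|R\|_1 \;\le\; \sum_{j=1}^{k} \|f_j\| \cdot \|S_n e_j\| + M \varepsilon,
\end{equation*}
which becomes less than $2M\varepsilon$ for large $n$ because each $\|S_n e_j\| \to 0$. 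Given this claim, the first summand $(A_n - A)BC$ is estimated by $\|(A_n-A)B\|_1 \|C\|$, and the claim applied to $S_n = A_n - A$, $T = B$ yields $(A_n - A) B \to 0$ in trace norm. For the second summand, I would pass to adjoints and write $\|A_n B (C_n - C)\|_1 \le \|A_n\| \cdot \|(C_n - C)^* B^*\|_1$; since $(C_n - C)^* \to 0$ strongly and $B^*$ is trace class, the claim applied to $S_n = (C_n - C)^*$ and $T = B^*$ finishes the proof.

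The only delicate point is the asymmetry between $A_n$ and $C_n$: we only have strong convergence of $C_n^*$, not of $C_n$, which is exactly why the standing hypothesis is formulated this way. The main obstacle I would anticipate, once the decomposition is set up, is verifying the finite-rank approximation claim carefully enough that no hidden assumption of strong convergence of adjoints sneaks in on the wrong side; the decomposition above is designed so that the adjoint move is applied only where it is available.
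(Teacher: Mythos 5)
Your proof is correct. Note that the paper does not supply its own argument for this lemma; it simply cites Widom's Proposition~2.1 from \cite{Wi76}, which is the source you would need to compare against. Widom's original proof works directly from the singular value decomposition $B=\sum_i s_i \langle\,\cdot\,,g_i\rangle f_i$ with $\sum_i s_i<\infty$, observes that
\begin{equation*}
A_n B C_n \;=\; \sum_i s_i\,\langle\,\cdot\,,C_n^* g_i\rangle\, A_n f_i,
\end{equation*}
notes that each rank-one summand converges in trace norm (since $A_n f_i\to Af_i$ and $C_n^* g_i\to C^* g_i$ in norm), and concludes by dominated convergence using the uniform bounds from the Banach--Steinhaus theorem. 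Your proof via a telescoping decomposition plus the auxiliary claim that $\|S_nT\|_1\to 0$ whenever $S_n\to 0$ strongly with $\sup_n\|S_n\|<\infty$ and $T$ trace class is a valid and somewhat more modular reorganization of the same finite-rank-approximation idea; both rely on the same facts, and there is no essential difference in difficulty. The point you flag at the end, that adjoints must be taken only on the $C_n$ side where strong convergence of $C_n^*$ is actually available, is exactly the subtlety that makes the hypothesis asymmetric, and your estimate $\|A_nB(C_n-C)\|_1\le\|A_n\|\,\|(C_n-C)^*B^*\|_1$ handles it correctly.
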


\begin{lemma}[{\cite[Lemma 9.3]{E03}}]\label{l1.KL}
Let $A_n = P_n+P_nKP_n+W_nLW_n+C_n$ be a sequence of $nN\times nN$
matrices where $K$ and $L$ are trace class operators, and $C_n$ tends
to zero in the trace class norm.  Then $\lim\limits_{n\to\infty}\det
A_n=\det(I+K)\det(I+L)$.
\end{lemma}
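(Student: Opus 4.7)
The plan is to recognize $\det A_n$ as a Fredholm determinant on $(\ell^2)^N$ and then exploit continuity of the operator determinant in the trace norm, together with a key vanishing statement showing that $P_nKP_n$ and $W_nLW_n$ ``asymptotically commute'' in the required sense.

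First, since $T_n := P_nKP_n + W_nLW_n + C_n$ is supported on the range of $P_n$ and $A_n$ acts there as $P_n + T_n$, I would identify
$$
\det A_n = {\det}_{(\ell^2)^N}\!\bigl(I + P_nKP_n + W_nLW_n + C_n\bigr).
$$
The trace norms satisfy $\|P_nKP_n\|_1\le\|K\|_1$ and $\|W_nLW_n\|_1=\|L\|_1$, so they remain bounded, while $\|C_n\|_1\to 0$. The standard Lipschitz estimate $|\det(I+A)-\det(I+B)|\le \|A-B\|_1\, e^{\|A\|_1+\|B\|_1+1}$ then lets me discard the $C_n$ term at the cost of an $o(1)$ error.

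The algebraic heart of the argument is the identity
$$
I + P_nKP_n + W_nLW_n = (I+P_nKP_n)(I+W_nLW_n) - P_nKW_nLW_n,
$$
which uses $P_nW_n=W_n$ to simplify $P_nKP_n\cdot W_nLW_n = P_nKW_nLW_n$. The main obstacle is to prove that $\|P_nKW_nLW_n\|_1\to 0$; granting this, a second application of the Lipschitz bound yields
$$
\det(I+P_nKP_n+W_nLW_n) = \det(I+P_nKP_n)\cdot\det(I+W_nLW_n) + o(1).
$$
To establish the vanishing I would bound $\|P_nKW_nLW_n\|_1$ by $\|KW_nL\|_1$ (since $\|P_n\|=\|W_n\|=1$), and for a rank-one summand $v\otimes u$ observe $\|(v\otimes u)W_nL\|_1 = \|v\|\cdot\|L^*W_nu\|$ with $L^*W_nu\to 0$ in norm because $W_nu\to 0$ weakly in $(\ell^2)^N$ and $L^*$ is compact. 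The general case follows via dominated convergence over the singular value expansion of $K$, using $\sum_j\sigma_j<\infty$.

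It remains to identify the two factor limits. Since $P_nKP_n\to K$ in trace norm for trace class $K$, continuity of the determinant gives $\det(I+P_nKP_n)\to\det(I+K)$. For the second factor I would use that $W_n$ restricted to the range of $P_n$ is a self-adjoint involution ($W_n^*=W_n$, $W_n^2=P_n$), hence a unitary on that finite-dimensional space; conjugation preserves the determinant and $W_n(W_nLW_n)W_n = P_nLP_n$, so $\det(I+W_nLW_n)=\det(I+P_nLP_n)\to\det(I+L)$. Putting everything together yields $\det A_n\to\det(I+K)\det(I+L)$.
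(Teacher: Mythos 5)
Your proof is correct, and it follows essentially the same route as the cited source (the paper itself only references \cite[Lemma 9.3]{E03} without reproducing a proof): recast $\det A_n$ as a Fredholm determinant, peel off $C_n$ by the Lipschitz continuity of $\det(I+\cdot)$ in trace norm, factor out the cross term $P_nKW_nLW_n$, show it vanishes in trace norm using the weak convergence $W_n\to 0$ together with compactness of $L^*$ (via a rank-one/SVD/dominated-convergence argument), and finish by conjugating $W_nLW_n$ to $P_nLP_n$ with the unitary $W_n+Q_n$ and invoking $P_nKP_n\to K$, $P_nLP_n\to L$ in trace norm. All steps check out; in particular the identity $(I+P_nKP_n)(I+W_nLW_n)=I+P_nKP_n+W_nLW_n+P_nKW_nLW_n$ holds because $P_nW_n=W_n$, and the weak convergence $W_nu\rightharpoonup 0$ combined with compactness of $L^*$ correctly yields $\|L^*W_nu\|\to 0$.
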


\begin{lemma}[{\cite[formula (1.4)]{Wi76}}]\label{widom}
 For bounded symbols $a,b\in L^\iy(\T)^{N\times N}$, 
$$T_{n}(ab) = T_{n}(a)T_{n}(b) + P_{n}H(a)H(\tilde{b} )P_{n} + W_{n}H(\tilde{a})H(b)W_{n}.$$
\end{lemma}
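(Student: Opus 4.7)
The plan is to derive the stated matrix identity directly from the basic Toeplitz/Hankel identity \eqref{T1} by compressing to the range of $P_n$ and then analyzing the resulting ``tail'' term.

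First I would start from \eqref{T1}, namely $T(ab)=T(a)T(b)+H(a)H(\tilde b)$, and multiply on both sides by $P_n$. Since $T_n(ab)=P_n T(ab) P_n$ under the identification of $nN\times nN$ matrices with operators on $(\ell^2)^N$, this gives
\begin{equation*}
T_n(ab)=P_n T(a)T(b) P_n+P_n H(a)H(\tilde b) P_n,
\end{equation*}
which already produces the second summand on the right-hand side. Next, I would insert $I=P_n+Q_n$ between $T(a)$ and $T(b)$, giving
\begin{equation*}
P_n T(a)T(b)P_n=P_n T(a)P_n T(b)P_n+P_n T(a)Q_n T(b)P_n=T_n(a)T_n(b)+P_n T(a)Q_n T(b)P_n.
\end{equation*}

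The remaining step — the heart of the proof — is to identify $P_n T(a)Q_n T(b)P_n$ with $W_n H(\tilde a)H(b)W_n$. For this I would compare matrix entries. For $0\le i,j<n$ one has
\begin{equation*}
\bigl(P_n T(a)Q_n T(b)P_n\bigr)_{ij}=\sum_{k\ge n}a_{i-k}\,b_{k-j}.
\end{equation*}
On the other hand, using $W_n X W_n$-entries $=X_{n-1-i,\,n-1-j}$ together with the definitions $H(c)_{pq}=c_{p+q+1}$ and $(\tilde a)_m=a_{-m}$, one computes
\begin{equation*}
\bigl(W_n H(\tilde a)H(b) W_n\bigr)_{ij}=\sum_{k\ge 0}(\tilde a)_{(n-1-i)+k+1}\,b_{k+(n-1-j)+1}=\sum_{k\ge 0}a_{i-n-k}\,b_{k+n-j},
\end{equation*}
and substituting $m=k+n$ turns this into $\sum_{m\ge n}a_{i-m}\,b_{m-j}$, matching the previous expression.

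I don't expect any genuine obstacle here; this is a bookkeeping proof whose only subtle ingredient is the bijection between the tail index $k\ge n$ and the flipped Hankel indexing produced by $W_n$. An alternative, essentially equivalent route would be to use $W_n^2=P_n$ together with \eqref{WTW} to write $P_n T(a)Q_n T(b) P_n=W_n\bigl(W_n T(a)Q_n T(b) W_n\bigr)W_n$ and check that, compressed to the range of $P_n$, the middle operator coincides with $H(\tilde a)H(b)$; this amounts to the same index computation. Combining the three pieces yields the claimed identity.
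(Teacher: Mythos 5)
Your proof is correct. The paper itself gives no proof of this lemma, citing it to Widom \cite[formula (1.4)]{Wi76}, so there is no in-paper argument to compare against; your derivation is the standard one, and the index bookkeeping checks out: compressing the identity $T(ab)=T(a)T(b)+H(a)H(\tilde b)$ with $P_n$, splitting $I=P_n+Q_n$, and then verifying entrywise that $\bigl(P_nT(a)Q_nT(b)P_n\bigr)_{ij}=\sum_{m\ge n}a_{i-m}b_{m-j}=\bigl(W_nH(\tilde a)H(b)W_n\bigr)_{ij}$ via $(\tilde a)_m=a_{-m}$, $H(c)_{pq}=c_{p+q+1}$, and $(W_nXW_n)_{ij}=X_{n-1-i,\,n-1-j}$ is exactly right, and the order of the matrix factors $a_\alpha b_\beta$ is preserved throughout, so the argument works verbatim in the block case.

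One small stylistic point: your alternative route at the end, which would rewrite $P_nT(a)Q_nT(b)P_n = W_n\bigl(W_nT(a)Q_nT(b)W_n\bigr)W_n$ using $W_n^2=P_n$, still requires you to identify the middle compressed operator with $H(\tilde a)H(b)$, which is the same index computation; so you are right that it is not genuinely shorter, just a cosmetic repackaging. The entrywise check you carried out in the main argument is the cleanest way to finish.
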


We say that a sequence of matrices $A_n\in \cL(\mathrm{Im} P_n)\subset (\ell^2)^N$ is {\em stable} if and only if there is an 
$n_0$ such that $A_n$ is invertible whenever $n\ge n_0$ and 
\begin{equation}\label{stability}
\sup_{n\ge n_0} \| A_n^{-1}\|_{ \cL(\mathrm{Im} P_n)}<+\infty.
\end{equation}
We will use stability in connection with the following basic facts. 

\begin{lemma}[{\cite[Section 6.2]{BS99}}]\label{stab.conv}
Let $a\in L^\iy(\T)^{N\times N}$ and assume that $T_n(a)$ is stable on $(\ell^2)^N$. Then
$T(a)$ is invertible on $(\ell^2)^N$ and 
$$
T_n(a)^{-1}\to T(a)^{-1},\qquad \left(T_n(a)^{-1}\right)^*\to \left(T(a)^{-1}\right)^*
$$
strongly on $(\ell^2)^N$.
\end{lemma}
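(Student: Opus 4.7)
The plan is to reduce this to a standard ``stable sequence implies invertible limit'' argument. First I would extend $T_n(a)^{-1}$ (which lives on $\mathrm{Im}\,P_n$) to an operator on all of $(\ell^2)^N$ by setting
$$
\hat{A}_n = P_n T(a) P_n + Q_n,
$$
so that $\hat{A}_n$ is invertible on $(\ell^2)^N$ exactly when $T_n(a)$ is invertible on $\mathrm{Im}\,P_n$, with $\hat{A}_n^{-1} = T_n(a)^{-1} P_n + Q_n$. Stability of $T_n(a)$ gives a uniform bound $\|\hat{A}_n^{-1}\| \le C$ for $n\ge n_0$, and since $P_n\to I$ strongly one checks that $\hat{A}_n \to T(a)$ and $\hat{A}_n^* \to T(a)^*$ strongly on $(\ell^2)^N$.

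The engine of the proof is the elementary identity
$$
\hat{A}_n^{-1} T(a) x - x = \hat{A}_n^{-1} \bigl(T(a) - \hat{A}_n\bigr) x,
$$
whose right-hand side tends to zero in norm by the strong convergence $\hat{A}_n x\to T(a)x$ combined with the uniform bound on $\hat{A}_n^{-1}$. From the stability estimate $\|x\| \le C\|\hat{A}_n x\|$ and strong convergence $\hat{A}_n x \to T(a) x$, one obtains $\|T(a) x\| \ge \|x\|/C$, so $T(a)$ is bounded below and in particular has closed range. Applying the same reasoning to the adjoint sequence $\hat{A}_n^*$ (which is also stable with the same bound and converges strongly to $T(a)^*$) yields that $T(a)^*$ is bounded below; hence $T(a)$ has dense range. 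Closed range plus dense range gives surjectivity, so $T(a)$ is invertible on $(\ell^2)^N$.

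With invertibility in hand, the displayed identity above becomes the desired convergence: writing an arbitrary $y\in(\ell^2)^N$ as $y=T(a)x$ with $x=T(a)^{-1}y$, we obtain $\hat{A}_n^{-1} y \to T(a)^{-1} y$ strongly. Since $\hat{A}_n^{-1} y = T_n(a)^{-1} P_n y + Q_n y$ and $Q_n y \to 0$, this is exactly the strong convergence $T_n(a)^{-1} \to T(a)^{-1}$ claimed in the lemma. The statement about adjoints follows by running the same argument on the sequence $\hat{A}_n^*$, using $(\hat{A}_n^{-1})^* = (\hat{A}_n^*)^{-1}$.

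The only step that requires any care is the identification of the ``extended'' operators $\hat{A}_n$ on $(\ell^2)^N$ with the finite matrix inverses $T_n(a)^{-1}$; once this bookkeeping is set up, the rest is the standard Polski-type argument for stable approximation sequences, and the main technical point is merely to exploit \emph{both} strong convergences $\hat{A}_n\to T(a)$ and $\hat{A}_n^*\to T(a)^*$ in order to upgrade ``bounded below'' to ``invertible''.
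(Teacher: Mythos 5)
Your proof is correct, and it is precisely the standard argument for this classical result on stable approximation sequences. The paper itself gives no proof here: the lemma is cited verbatim from B\"ottcher--Silbermann \cite[Section 6.2]{BS99}, so there is no in-text argument to compare against. Your route --- extending $T_n(a)$ to $\hat{A}_n=P_nT(a)P_n+Q_n$ on the full space, using the uniform bound on $\hat{A}_n^{-1}$ together with $\hat{A}_n\to T(a)$ and $\hat{A}_n^*\to T(a)^*$ strongly to conclude that both $T(a)$ and $T(a)^*$ are bounded below (hence $T(a)$ is invertible), and then reading off the strong convergence of $\hat{A}_n^{-1}$ and its adjoint from the identity $\hat{A}_n^{-1}T(a)x-x=\hat{A}_n^{-1}(T(a)-\hat{A}_n)x$ --- is exactly the Polski-type argument that the cited reference uses. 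No gaps.
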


\begin{proposition}[{see, e.g., \cite[Thm.~7.20]{BS06}, \cite[Thm.~6.9]{BS99},  or \cite{GF}}]\label{stab.cont}
Given $a \in C(\T)^{N\times N}$, the stability of $T_n(a)$ is equivalent to the invertibility of both
$T(a)$ and $T(\ta)$ on $(\ell^2)^N$.
\end{proposition}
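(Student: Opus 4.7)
The proposition is an equivalence, to be proved in both directions using Widom's identity, identity \eqref{WTW}, and Lemma~\ref{stab.conv}.

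For the implication $(\Rightarrow)$, if $T_n(a)$ is stable then Lemma~\ref{stab.conv} immediately yields the invertibility of $T(a)$. For $T(\tilde a)$, the identity $W_n T_n(a) W_n = T_n(\tilde a)$ from \eqref{WTW} combined with $W_n^2 = P_n$ shows that $T_n(\tilde a)$ inherits stability from $T_n(a)$: its inverse on $\mathrm{Im}\,P_n$ is $W_n T_n(a)^{-1} W_n$, with the same uniform norm bound. A second application of Lemma~\ref{stab.conv} then delivers the invertibility of $T(\tilde a)$.

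For the implication $(\Leftarrow)$, I would construct an asymptotic inverse of $T_n(a)$ by means of Widom's identity (Lemma~\ref{widom}). Since $T(a)$ is invertible and $a\in C(\T)\NN$, Fredholm theory forces $\det a(t)\neq 0$ on $\T$, so that $b:=a^{-1}\in C(\T)\NN$ is well defined. For continuous matrix symbols $c$ the Hankel operators $H(c)$ are compact (trigonometric-polynomial approximation reduces to the finite-rank case); in particular $H(a)$, $H(\tilde a)$, $H(b)$, $H(\tilde b)$ are all compact. Using $P_n \to I$ strongly and $W_n\to 0$ weakly (whence $W_n K W_n \to 0$ in operator norm for any compact $K$), Widom's formula applied to the pairs $(a,b)$ and $(b,a)$ gives, after promotion to $(\ell^2)^N$ by adding $Q_n$,
\begin{align*}
(T_n(a)+Q_n)(T_n(b)+Q_n) &= I - P_n H(a)H(\tilde b) P_n - W_n H(\tilde a)H(b) W_n,\\
(T_n(b)+Q_n)(T_n(a)+Q_n) &= I - P_n H(b)H(\tilde a) P_n - W_n H(\tilde b)H(a) W_n,
\end{align*}
both of which converge in operator norm to $T(a)T(b)$ and $T(b)T(a)$, respectively, thanks to \eqref{T1}. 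Once one knows that $T(b)=T(a^{-1})$ is invertible, both limits are invertible operators (as $T(a)$ is), so for all sufficiently large $n$ the products $(T_n(a)+Q_n)(T_n(b)+Q_n)$ and $(T_n(b)+Q_n)(T_n(a)+Q_n)$ are invertible with uniformly bounded inverses; a standard argument then shows that $T_n(a)+Q_n$ (hence $T_n(a)$) is stable.

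The main obstacle is precisely the invertibility of $T(a^{-1})$. In the block case ($N\geq 2$), invertibility of $T(a)$ alone does not imply invertibility of $T(a^{-1})$, and this is exactly where the second hypothesis on $T(\tilde a)$ is used. By classical Gohberg theory for continuous matrix symbols, invertibility of $T(a)$ is equivalent to $a$ admitting a right canonical Wiener--Hopf factorization $a=a_-a_+$, while invertibility of $T(\tilde a)$ is equivalent to $a$ admitting a left canonical factorization $a=a_+'a_-'$. Under both hypotheses, $a^{-1}$ inherits factorizations of both types, so $T(a^{-1})$ (and also $T(\widetilde{a^{-1}})$) is invertible, closing the argument. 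An alternative, more conceptual, route is to work in the Silbermann $C^*$-algebra of Toeplitz sequences modulo null sequences, where stability is equivalent to joint invertibility under the pair of $*$-homomorphisms $(A_n)\mapsto s\text{-}\lim A_n$ and $(A_n)\mapsto s\text{-}\lim W_n A_n W_n$, which on $(T_n(a))$ yield precisely $T(a)$ and $T(\tilde a)$.
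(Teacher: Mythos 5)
The forward implication and the observation that invertibility of $T(\tilde a)$ is equivalent to that of $T(a^{-1})$ (via \eqref{f.Tiv2}) are both fine. The problem is in the converse. Your pivotal claim that ``$W_n \to 0$ weakly, whence $W_n K W_n \to 0$ in operator norm for any compact $K$'' is false. For the rank-one projection $K = e_0 \otimes e_0$ one has $W_n K W_n = e_{n-1}\otimes e_{n-1}$, which has norm $1$ for every $n$; likewise $\|K W_n\| = \|W_n K\| = 1$ for all $n$. (Weak convergence of $W_n$ to zero, combined with compactness of $K$, only gives $W_n K W_n \to 0$ strongly, not in norm.) Consequently $(T_n(a)+Q_n)(T_n(a^{-1})+Q_n)$ does \emph{not} converge in operator norm to $T(a)T(a^{-1})$, and the whole Neumann-series stability conclusion you draw from that claimed norm convergence collapses. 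This is precisely the phenomenon that forces the Silbermann algebra to include the ideal of sequences $\big(P_nKP_n + W_nLW_n + C_n\big)$ with $K,L$ compact and $\|C_n\|\to 0$: the $W_nLW_n$ terms cannot be absorbed into the null sequences.

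Your last sentence names the right tool, but as stated it is still incomplete. The correct argument applies Widom's identity to conclude that, modulo the ideal $\mathcal{J}$ above, $(T_n(a))$ has two-sided inverse $(T_n(a^{-1}))$, and then invokes Silbermann's lifting theorem (e.g.\ \cite[Thm.~6.1]{BS99}): a sequence in the algebra is stable if and only if its coset in the quotient is invertible \emph{and} the two strong limits under $A_n\mapsto A_nP_n$ and $A_n\mapsto W_nA_nW_n$ are invertible. Applied to $(T_n(a))$ these strong limits are exactly $T(a)$ and $T(\tilde a)$, which gives the proposition. So the Widom formula does appear in the proof, but only inside this $C^*$-algebraic framework, not through a naive norm-convergence argument.
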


The previous result can be generalized to $a\in PC\NN$ (see \cite[Cor.~6.12]{BS99}). For completeness' sake, we mention also
the following criterion, which is related to Theorem \ref{thm:index}.

\begin{proposition}[{see \cite[Thm.~6.5]{BS99} or \cite[Sect.~2.41-42, 2.94]{BS06}}] \label{Tiv.C}
Given $a\in C(\T)\NN$, the Toeplitz operator $T(a)$ is Fredholm on $(\ell^2)^N$ if and only if $\det a$ does not vanish on $\T$.
In this case, $\ind T(a) =- \wind(\det a)$.
\end{proposition}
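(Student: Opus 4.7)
My plan is to prove Fredholmness via a direct regularizer argument exploiting compactness of Hankel operators with continuous symbols, and then derive the index formula by homotopy invariance reduced to a diagonal model.

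For the Fredholm criterion, the key fact is that for any $b \in C(\T)\NN$ the Hankel operator $H(b)$ is compact on $(\ell^2)^N$: approximate $b$ uniformly by matrix-valued trigonometric polynomials (for which $H(\cdot)$ has finite rank) and use $\|H(\cdot)\| \le \|\cdot\|_\iy$. If $\det a$ is nonvanishing on $\T$, then $a^{-1} \in C(\T)\NN$, and applying \eqref{T1} with $b=a^{-1}$ and then again with the roles swapped yields
\[
T(a)T(a^{-1}) = I - H(a)H(\widetilde{a^{-1}}), \qquad T(a^{-1})T(a) = I - H(a^{-1})H(\tilde a),
\]
so $T(a^{-1})$ is a two-sided Fredholm regularizer and $T(a)$ is Fredholm. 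For the converse, if $\det a(t_0)=0$ at some $t_0 \in \T$, I would use a standard localization argument: pick unit vectors $v, w \in \C^N$ in the kernels of $a(t_0)$ and $a(t_0)^*$, and construct orthonormal singular sequences for $T(a)$ and $T(a)^*$ by modulating $v$ and $w$ with normalized Fej\'er-type kernels centered at $t_0$; continuity of $a$ at $t_0$ forces the images to tend to zero in norm, contradicting the Fredholm property.

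For the index formula, set $\kappa = \wind(\det a)$. Homotopy invariance of the Fredholm index, combined with norm-continuity of $a\mapsto T(a): (C(\T)\NN,\|\cdot\|_\iy) \to \cL((\ell^2)^N)$ and the fact that the set of Fredholm operators is open with locally constant index, reduces the calculation to any representative of the path component of $a$ in $GL_N(C(\T))$. I would take as representative
\[
a_\kappa(t) = \diag(t^\kappa, 1, \ldots, 1),
\]
on which $T(a_\kappa)$ decomposes as the orthogonal direct sum $T(\chi_\kappa) \oplus T(1)\oplus\cdots\oplus T(1)$ with $\chi_\kappa(t)=t^\kappa$; the scalar factor is an (adjoint) power of the unilateral shift with index $-\kappa$, so $\ind T(a_\kappa) = -\kappa$.

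The main obstacle is the homotopy step: showing that every $a \in GL_N(C(\T))$ with $\wind(\det a)=\kappa$ is connected by a continuous path in $GL_N(C(\T))$ to $a_\kappa$. This is the statement that the path components of $GL_N(C(\T))$ are labelled by $\wind \circ \det$, equivalently that $SL_N(C(\T))$ is path-connected, which in turn reflects the simple connectedness of $SL_N(\C)$ for $N \ge 2$ (and is trivial for $N=1$). A more elementary alternative, avoiding homotopy theory, is to invoke a Wiener-Hopf factorization $a = a_- \diag(\chi_{\kappa_1}, \ldots, \chi_{\kappa_N}) a_+$ with $\sum_j \kappa_j = \kappa$ and $a_\pm^{\pm 1}$ extending analytically across $\T$: then \eqref{T2} yields a product decomposition in which $T(a_\pm)$ are invertible and the index reduces to the scalar diagonal middle factor.
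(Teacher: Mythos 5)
The paper does not prove Proposition~\ref{Tiv.C}; it is stated with references to \cite{BS99} and \cite{BS06} and used as a black box (note that even in the proof of Theorem~\ref{thm:index} the index formula $\ind T(\phi_0)=-\wind(\det\phi_0)$ for continuous $\phi_0$ is again delegated to the literature, there to \cite{GGK}). So there is no internal proof to compare against, and your proposal is supplying what the paper deliberately outsources.

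Your main route is correct. The identities $T(a)T(a^{-1}) = I - H(a)H(\widetilde{a^{-1}})$ and $T(a^{-1})T(a) = I - H(a^{-1})H(\tilde a)$ follow directly from \eqref{T1}, and compactness of $H(b)$ for $b \in C(\T)\NN$ (the matrix version of Hartman's theorem) is exactly as you argue: finite rank for Laurent polynomial symbols, density of those in $C(\T)\NN$, and $\|H(b)\|\le\|b\|_\iy$. This gives a two-sided compact regularizer when $\det a$ is nonvanishing. The converse by singular sequences is sound, though somewhat sketchy; the cleaner packaging is the standard fact that Fredholmness of $T(a)$ already forces $a$ to be invertible in $L^\iy(\T)\NN$, which for continuous $a$ is precisely $\det a\neq 0$ on $\T$. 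For the index, the homotopy reduction to $a_\kappa=\diag(\chi_\kappa,1,\dots,1)$ with $\chi_\kappa(t)=t^\kappa$, followed by $\ind T(\chi_\kappa)=-\kappa$ and $\ind T(1)=0$, is correct, and the topological input (path components of $C(\T,GL_N(\C))$ are classified by $\pi_1(GL_N(\C))\cong\Z$ via $\wind\circ\det$, using that $SL_N(\C)$ is simply connected) is the right reason.

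One genuine gap: the Wiener--Hopf ``alternative'' as stated does not avoid the homotopy step. A factorization $a=a_-\,\diag(\chi_{\kappa_1},\dots,\chi_{\kappa_N})\,a_+$ with $a_\pm^{\pm1}$ extending analytically across $\T$ is available for matrix Laurent polynomials or rational symbols, but is not available for a general $a\in GL_N(C(\T))$; even a weaker $L^2$-factorization need not exist for arbitrary continuous matrix symbols. To use factorization you would first have to uniformly approximate $a$ by a Laurent polynomial $b$ with $\det b$ nonvanishing and $\wind(\det b)=\kappa$, then invoke local constancy of the Fredholm index on $GL_N(C(\T))$ to pass from $T(a)$ to $T(b)$ --- and at that point you have already used the homotopy argument. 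So present factorization as the mechanism for the final computation once you are on a nice representative, not as a way to avoid connectedness considerations for $GL_N(C(\T))$.
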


The following result about Fredholmness is probably known, but we could not find a reference 
that pertains in particular to the index equality.

\begin{proposition}\label{p.T-Fred}
Let $a\in L^\infty(\T)\NN$ be invertible. Then $T(\tilde{a})$ is Fredholm on $(\ell^2)^N$ if and only if 
$T(a^{-1})$ is Fredholm on $(\ell^2)^N$. Moreover, in this case,
$$
\ind T(\ta) = \ind T(a\iv).
$$
\end{proposition}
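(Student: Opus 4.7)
The plan is to realize $T(\tilde a)$ and $T(a\iv)$ as corresponding diagonal blocks of the Laurent multiplication operator $M_a$ on $L^2(\T)^N$ and of its inverse $M_{a\iv}$, respectively, and then to invoke an abstract fact about Fredholm properties of diagonal blocks of an invertible $2\times 2$ operator matrix.

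First I would decompose $L^2(\T)^N = H_+ \oplus H_-$, where $H_+ = H^2(\T)^N$ and $H_-$ is its orthogonal complement, and identify each summand with $(\ell^2)^N$---the first via $\{f_n\}_{n\ge 0}$ and the second via $g \mapsto \{g_{-n-1}\}_{n\ge 0}$. A direct Fourier-coefficient computation, using the defining formulas for $T(b)$ and $H(b)$, shows that under this identification
\begin{equation*}
M_a \;\cong\; \twotwo{T(a)}{H(a)}{H(\tilde a)}{T(\tilde a)},\qquad
M_{a\iv} \;\cong\; \twotwo{T(a\iv)}{H(a\iv)}{H(\widetilde{a\iv})}{T(\widetilde{a\iv})}.
\end{equation*}
Since $a$ is invertible in $L^\iy(\T)\NN$, $M_a$ is invertible on $L^2(\T)^N$ with inverse $M_{a\iv}$, so the first block matrix above is invertible and the second represents its inverse.

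The crux is then the following elementary lemma, which I would prove next: if $M = \twotwo{A}{B}{C}{D}$ is a bounded invertible operator on $X \oplus Y$ with inverse $M\iv = \twotwo{A'}{B'}{C'}{D'}$, then $D$ is Fredholm if and only if $A'$ is Fredholm, and in that case $\ind D = \ind A'$. The key identity is the factorization
\begin{equation*}
M \twotwo{I}{B'}{0}{D'} = \twotwo{A}{0}{C}{I},
\end{equation*}
which is immediate from $AB' + BD' = 0$ and $CB' + DD' = I$. Because $M$ is invertible, the upper-triangular factor on the left is Fredholm exactly when the lower-triangular factor on the right is, with equal indices. Each of these block-triangular operators has an identity on one diagonal entry, and a routine check shows that its Fredholm properties reduce to those of the other diagonal entry. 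We conclude that $D'$ is Fredholm iff $A$ is Fredholm, with $\ind D' = \ind A$. Applying the same statement with $M$ replaced by $M\iv$ (whose own inverse is $M$) then gives the corresponding equivalence for $D$ and $A'$.

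Finally, applying the lemma to $M = M_a$ in the block form above, with $D = T(\tilde a)$ and $A' = T(a\iv)$, yields the proposition. The main step requiring care is the abstract $2\times 2$ block-operator lemma, but once the factorization is spotted the remainder is essentially linear algebra.
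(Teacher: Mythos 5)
Your proof is correct, and it reaches the same structural insight as the paper but packages it differently. The paper's argument also starts from the Laurent operator $L(a)$ (your $M_a$) on the two-sided space and its inverse $L(a^{-1})$, but it proceeds by constructing an explicit equivalence after extension: it writes $J\bigl(PL(\tilde a)P + Q\bigr)J = QL(a)Q + P = (L(a)Q+P)(I-PL(a)Q)$ and $PL(a^{-1})P + Q = L(a^{-1})(L(a)Q+P)(I-QL(a^{-1})P)$, then chains these together, using the flip $J$, the invertibility of $L(a^{-1})$, and the invertibility of the two ``one-sided'' factors $I-PL(a)Q$ and $I-QL(a^{-1})P$. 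In contrast, you extract the underlying mechanism into a clean abstract lemma: for an invertible $2\times 2$ block operator $M$ with inverse $M^{-1}$, the $(2,2)$-block of $M$ and the $(1,1)$-block of $M^{-1}$ are simultaneously Fredholm with equal index, proved via the factorization $M\left(\begin{smallmatrix}I & B'\\0 & D'\end{smallmatrix}\right) = \left(\begin{smallmatrix}A & 0\\C & I\end{smallmatrix}\right)$. This is essentially the abstract reason why equivalence after extension applies in the paper's computation, so the two proofs are close cousins; your version has the advantage of isolating a reusable linear-algebraic principle and avoiding the somewhat ad hoc factorization manipulations, whereas the paper's version stays closer to the Toeplitz/Hankel calculus already in play and avoids introducing a separate lemma. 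Both hinge on the same two facts: $M_a$ is invertible with inverse $M_{a^{-1}}$, and $T(\tilde a)$, $T(a^{-1})$ sit as the $(2,2)$- and $(1,1)$-blocks of these operators relative to the Hardy decomposition of $L^2(\T)^N$.
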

\begin{proof}
The statements follow from the fact that the two operators $T(\ta)$ and $T(a\iv)$ are equivalent after extension.
By this it is meant  \cite{BaTs}  that there exist
Banach spaces $Z_1$ and $Z_2$
and invertible bounded linear operators $E_1$ and $E_2$ between the appropriate direct sum spaces such that 
$$
T(\ta)\oplus I_{Z_1} = E_1\Big( T(a\iv)\oplus I_{Z_2}\Big) E_2.
$$
Indeed, consider the following extensions of $T(\ta)$ and $T(a\iv)$ onto $(\ell^2(\Z))^N$, 
$$
T(\ta)\oplus I_{(\ell^2(\Z_-))^N}=P L(\ta) P + Q ,\quad 
T(a\iv)\oplus   I_{(\ell^2(\Z_-))^N}=P L(a\iv)P+Q.
$$
Here $L(b)\cong (b_{j-k})_{j,k=-\infty}^{\infty}$ stands for the Laurent operator on $(\ell^2(\Z))^N$, $P$ stands for the orthogonsl projection from  $(\ell^2(\Z))^N$ onto
$(\ell^2(\Z_+))^N=(\ell^2)^N$, $Q=I-P$ is the complementary projection, and $J:\{x_n\}_{n=-\infty}^{\infty} \mapsto  \{x_{-1-n}\}_{n=-\infty}^{\infty}$ is a flip operator on $(\ell^2(\Z))^N$. We have the relations $J^2=I$, $JQJ=P$, $JL(b)J=L(\tilde{b})$.
These extended operators can be written as
\begin{align*}
J\Big(P L(\ta) P + Q\Big)J 
&= QL(a)Q+P\\
&= \Big(L(a)Q+P\Big) \Big(I-PL(a)Q\Big),
\\
PL(a\iv)P+Q &=
 \Big(L(a\iv)P+Q\Big) \Big(I-QL(a\iv)P\Big)
 \\
 &=
 L(a\iv) \Big(L(a)Q+P\Big) \Big(I-QL(a\iv)P\Big),
\end{align*}
from which the equivalence is easily seen by noting that the operators $J$, $L(a\iv)$, as well as
$$
I-PL(a)Q\quad\mbox{ and }\quad I-QL(a\iv)P
$$
are invertible.
\end{proof}

\subsection{Invertibility, stability, and determinant asymptotics for pure jump symbols}

For pure matrix jump symbols $\phi=u_{B,\tau}$ we are going to state the invertibility of
$T(\phi)$ and $T(\wt{\phi})$ on $(\ell^2)^N$, and the stability of the sequence $T_n(\phi)$ under certain conditions on $B$.
In addition, we describe the asymptotics of the determinant $\det T_n(\phi)$ as $n\to\infty$.

As we will see, the matrix case completely reduces to the scalar case, for which these results are known. We refer to \cite[Thm.~5.62]{BS06} for  invertibility, to \cite[Cor.~2.19]{BS99} for stability, and to \cite[Cor.~10.60]{BS06} for the determinants. 
Actually, the results about the pure scalar symbols $u_{\beta,\tau}$ can also be seen directly. Indeed, if $|\Re\beta|<1/2$, then 
the symbol is sectorial, i.e., $\Re(u_{\beta,\tau}(t))\ge \varepsilon$ for some $\varepsilon>0$, which implies invertibility of $T(u_{\beta,\tau})$ and
$T(\tilde{u}_{\beta,\tau})$
and stability of $T_n(u_{\beta,\tau})$. The matrix $T_n(u_{\beta,\tau})$ is basically a Cauchy matrix and its determinant can be evaluated explicitly for any $\beta\in\C$.

\begin{proposition}\label{p.pure}
Assume that the eigenvalues $\beta^{(1)},\dots,\beta^{(N)}$ of an $N\times N$ matrix $B$ have real parts in $I=(-1/2,1/2)$. 
Then the operators $T(u_{B,\tau})$ and $T(\tilde{u}_{B,\tau})$ are invertible on $(\ell^2)^N$ and the sequence $T_n(u_{B,\tau})$ is stable.
Furthermore,
$$
\det T_n(u_{B,\tau})= E n^{\Omega} (1+o(1)), \quad \mbox{ as }n\to\infty
$$
with 
$$
\Omega =-\sum_{k=1}^N (\beta^{(k)})^2,\qquad  E=\prod_{k=1}^N G(1+\beta^{(k)})G(1-\beta^{(k)}),
$$
where $G(z)$ stands for the Barnes $G$-function \eqref{Barnes}.
\end{proposition}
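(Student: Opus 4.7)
The plan is to reduce the claims to the already-established scalar case via Schur triangularization. Write $B = S\iv J S$ with $S\in\C\NN$ invertible and $J$ upper triangular carrying the eigenvalues $\beta^{(1)},\dots,\beta^{(N)}$ on its diagonal. The power-series definition of $u_{B,\tau}$ yields $u_{B,\tau}(t) = S\iv u_{J,\tau}(t)\,S$, and because constant matrix factors produce Toeplitz operators with vanishing Hankel parts, identity \eqref{T2} gives
$$T(u_{B,\tau}) = T(S\iv)\,T(u_{J,\tau})\,T(S),\qquad T_n(u_{B,\tau}) = S_{(n)}\iv\,T_n(u_{J,\tau})\,S_{(n)},$$
where $T(S)=\diag(S,S,\dots)$ and $S_{(n)}=\diag(S,\dots,S)$ are invertible operators with uniformly bounded inverses. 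This reduction preserves Fredholmness, invertibility, and stability, and it leaves the determinant unchanged since the factor $(\det S)^n$ from $S_{(n)}$ cancels against $(\det S)^{-n}$ from $S_{(n)}\iv$. The same reduction, using $\tilde u_{B,\tau} = S\iv \tilde u_{J,\tau}S$, carries statements about $T(\tilde u_{B,\tau})$ over to $T(\tilde u_{J,\tau})$.

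Next I would exploit that $u_{J,\tau}(t) = \exp(iJ\arg(-t/\tau))$ inherits the upper-triangular structure of $J$, with diagonal entries $(u_{J,\tau})_{kk}(t) = u_{\beta^{(k)},\tau}(t)$. After permuting the basis of $\C^{nN}$ (resp.\ $(\ell^2)^N$) so that the $\C^N$-index runs outside the position index rather than inside, $T_n(u_{J,\tau})$ presents as an $N\times N$ block upper triangular matrix of $n\times n$ blocks whose $(i,k)$-block equals the scalar Toeplitz matrix $T_n((u_{J,\tau})_{ik})$ and whose diagonal blocks are $T_n(u_{\beta^{(k)},\tau})$. The analogous presentation holds for the infinite-dimensional operators $T(u_{J,\tau})$ and $T(\tilde u_{J,\tau})$.

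The scalar case now completes the argument. Each symbol $u_{\beta^{(k)},\tau}$ is sectorial since $|\Re\beta^{(k)}|<1/2$, so (as noted in the text preceding the statement) $T(u_{\beta^{(k)},\tau})$ and $T(\tilde u_{\beta^{(k)},\tau})$ are invertible on $\ell^2(\Z_+)$ and the sequence $T_n(u_{\beta^{(k)},\tau})$ is stable. A block upper triangular operator or sequence is invertible (respectively stable) exactly when every diagonal block is, so these properties lift to $T(u_{J,\tau})$, $T(\tilde u_{J,\tau})$, and $T_n(u_{J,\tau})$, and through the first paragraph back to $u_{B,\tau}$. Moreover, the determinant of a block triangular matrix is the product of its diagonal block determinants, so
$$\det T_n(u_{B,\tau}) \;=\; \det T_n(u_{J,\tau}) \;=\; \prod_{k=1}^N \det T_n(u_{\beta^{(k)},\tau}),$$
and applying the scalar Fisher--Hartwig asymptotics $\det T_n(u_{\beta,\tau}) = G(1+\beta)G(1-\beta)\,n^{-\beta^2}(1+o(1))$ factorwise yields the advertised $E\,n^{\Omega}(1+o(1))$ with the stated $E$ and $\Omega$. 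The only bookkeeping subtlety I anticipate is carrying out the basis permutation cleanly so that the diagonal blocks really are the scalar Toeplitz matrices of $u_{\beta^{(k)},\tau}$; beyond that, no serious obstacle is expected since every step is either an algebraic identity or a direct citation of a scalar result referenced in the paper.
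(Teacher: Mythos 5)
Your proof is correct and takes essentially the same approach as the paper: conjugate $B$ by a constant invertible matrix to reach an upper-triangular form, note that $u_{J,\tau}$ inherits the triangular structure with $u_{\beta^{(k)},\tau}$ on the diagonal, and exploit block upper-triangularity (after permuting the index order) to reduce invertibility, stability, and the determinant to the scalar case. The only cosmetic difference is that you use Schur triangularization where the paper passes through the Jordan normal form and treats the diagonal and single-Jordan-block cases in two steps, but the reduction to scalar Toeplitz operators on the block diagonal is identical.
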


\begin{proof}
First notice that $T(\tilde{u}_{B,\tau})=T(u_{-B,\bar{\tau}})$. Hence the invertibility for $T(\tilde{u}_{B,\tau})$ will follow once it is proven for 
$T(u_{B,\tau})$.

If $B=SJS^{-1}$ where $S$ is an invertible matrix and $J$ is another matrix (such as the Jordan normal form of 
$B$), then $u_{B,\tau}(t)=Su_{J,\tau}(t)S^{-1}$ and 
$$
T(u_{B,\tau})=(S\otimes I) T(u_{J,\tau}) (S^{-1}\otimes I), \qquad 
T_n(u_{B,\tau})=(S\otimes I_n) T_n(u_{J,\tau}) (S^{-1}\otimes I_n).
$$
Here $S\otimes I$ and $S\otimes I_n$ stand for the linear operators on $(\ell^2)^N\cong \C^N\otimes  \ell^2$
and $(\C^n)^N\cong  \C^N\otimes \C^n$ defined by
$$
S\otimes I:(x_0,x_1,x_2,\dots) \mapsto (Sx_0,Sx_1,Sx_2,\dots)
$$
and
$$
S\otimes I_n:(x_0,x_1,x_2,\dots x_{n-1}) \mapsto (Sx_0,Sx_1,Sx_2,\dots, Sx_{n-1})
$$
where $x_k\in\C^N$. On a more formal level, $S\otimes I$ and $S\otimes I_n$ are the block Toeplitz operator and  the $nN\times nN$ block Toeplitz matrix, resp., with
symbol equal to $S$, a constant $N\times N$ matrix function.

Hence, all issues can  be reduced to the case where $B$ is of Jordan normal form.
Notice first that things become particularly simple if $B$ is of diagonal form, say,
$$
B=\diag(\beta^{(1)},\dots, \beta^{(N)}).
$$
Then 
$$
u_{B,\tau}= \diag( u_{\beta^{(1)},\tau},\dots, u_{\beta^{(N)},\tau}),
$$
and a corresponding ``diagonal representation'' holds for the block Toeplitz operators and matrices. By assumption the real parts of all $\beta^{(k)}$'s are in 
$(-1/2,1/2)$, and therefore the known scalar results mentioned above imply the assertions.

Using a similar decomposition, it is easy to see that the general case where $B$ is of Jordan form can be reduced to the case where 
 $B$ is a simple Jordan block, say,
 $$
 B=\left(\begin{array}{cccc} \beta & 1 & \dots & 0  \\
 &\beta & \ddots  & \vdots \\
 && \ddots & 1 \\
 &&& \beta \end{array}\right)
 $$
with the real part of $\beta$ in $I=(-1/2,1/2)$. For  the function $u_{B,1}$, which is defined via a matrix exponential, we get
$$
u_{B,1}(e^{ix})=\exp(i(x-\pi)B)=u_{\beta,1}(e^{ix})\exp(i(x-\pi)J)
$$
(where $J$ is the  simple Jordan block with eigenvalue zero) and
$$
u_{B,\tau}(e^{ix}) = \left(\begin{array}{cccc} u_{\beta,\tau}& \ast & \dots & \ast  \\
 &u_{\beta,\tau} & \ddots  & \vdots \\
 && \ddots & \ast \\
 &&& u_{\beta,\tau}  \end{array}\right)
 $$
where ``$\ast$'' stands for certain piecewise continuous functions. A similar upper-triangular block matrix representation is obtained
for  $T(u_{B,\tau})$ and $T_n(u_{B,\tau})$. From there it is seen that for the issues of invertibility, stability, and for the determinants, only the entries on the diagonals matter. Therefore, again, everything reduces to the scalar case.
\end{proof}

\section{Determinant asymptotics: First results}\label{1st results}

\subsection{Basic localization results}

In \cite{B1979}, the following result about the product of Hankel operators was established.
{\em  Suppose $a$ and $b$ are bounded functions on $\T$ for which there exists a smooth partition of unity, $f+g=1$, such that both $af$ and $bg$  have derivatives satisfying a Lipschitz condition with order greater than $1/2$. Then 
 $H(a)H(\tilde{b})$ and $H(\tilde{a})H(b)$ are trace class. 
}
 
Using the same idea but specializing to piecewise continuous symbols we can somewhat improve on the exponent in the H\"older-Lipschitz condition.

The following result on the decay of the Fourier coefficients of functions in the two classes $PC^{1+\eps}(\T;\Gamma)$ and $\CpwTG$ (see Definition~\ref{PC1+e_def}) can be established easily. We will assume throughout what follows that $0<\eps<1$.

\begin{lemma}\label{l.Four-asym}
The Fourier coefficients of $f\in PC^{1+\eps}(\T;\Gamma)$ have the asymptotics
$$
f_n= O(|n|^{-1}),\qquad |n|\to+\infty,
$$
while those of $f\in \CpwTG$ have the asymptotics
$$
f_n= O(|n|^{-1-\eps}),\qquad |n|\to+\infty.
$$
\end{lemma}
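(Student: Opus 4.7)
The plan is to obtain both bounds by integration by parts on each arc of smoothness, combined (for the second bound) with the classical $\pi/n$-shift argument that extracts H\"older decay. Throughout, let $0\le\theta_1<\dots<\theta_R<2\pi$ denote the arguments of the jump points, and set $\theta_{R+1}:=\theta_1+2\pi$.

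For the first bound, I would split
\[
2\pi f_n=\sum_{k=1}^{R}\int_{\theta_k}^{\theta_{k+1}} f(e^{i\theta})e^{-in\theta}\,d\theta
\]
and integrate by parts on each arc using $\frac{d}{d\theta}f(e^{i\theta})=f'(e^{i\theta})ie^{i\theta}$. This produces boundary contributions of order $|f(\tau_k\pm 0)|/|n|=O(|n|^{-1})$ at each jump --- they do not cancel in general because $f$ may be discontinuous --- together with a remainder
\[
\frac{1}{2\pi in}\int_0^{2\pi} f'(e^{i\theta})\,ie^{i\theta}\,e^{-in\theta}\,d\theta.
\]
Since $f'$ is bounded on $\T\setminus\Gamma$ (the piecewise H\"older condition extends $f'$ continuously to each closed arc), this remainder is trivially $O(|n|^{-1})$, and we conclude $f_n=O(|n|^{-1})$.

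For the second bound, $f\in\CpwTG$ is continuous on $\T$, so the same boundary contributions now telescope to zero. A direct computation then yields the identity $f_n=\tfrac{1}{n}(f')_{n-1}$, reducing matters to showing that a function $g$ which is H\"older of order $\eps$ on each open arc $(\tau_k,\tau_{k+1})$ satisfies $g_n=O(|n|^{-\eps})$. For this I would apply the standard $\pi/n$-shift trick: using $e^{-in\pi}=-1$ and the periodicity of $g$,
\[
2g_n=\frac{1}{2\pi}\int_0^{2\pi}\bigl(g(e^{i\theta})-g(e^{i(\theta+\pi/n)})\bigr)e^{-in\theta}\,d\theta.
\]
Split $[0,2\pi)$ into a \emph{good} set on which both $\theta$ and $\theta+\pi/n$ lie in the same open arc (pointwise bound $\le C(\pi/|n|)^\eps$ by the piecewise H\"older condition) and a \emph{bad} set consisting of $R$ intervals of length $\pi/|n|$ just before each jump (pointwise bound $\le 2\|g\|_\infty$, total measure $\le R\pi/|n|$). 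Adding the two contributions yields $|g_n|\le C_1|n|^{-\eps}+C_2|n|^{-1}=O(|n|^{-\eps})$ since $\eps\le 1$. Applying this to $g=f'$ gives $f_n=O(|n|^{-1-\eps})$, as claimed.

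The only point requiring care --- the ``main obstacle,'' in so far as there is one --- is keeping careful track of the jump contributions: verifying that the boundary terms genuinely cancel in the continuous case, and correctly isolating the small bad neighborhoods of the jumps in the shift argument. Both are routine.
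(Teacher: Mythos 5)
The paper does not actually prove this lemma --- it is dismissed with ``can be established easily'' --- so there is no paper argument to compare against. Your proof is correct and is the standard one: integration by parts on each arc of smoothness, with the boundary contributions giving the $O(|n|^{-1})$ bound in the discontinuous case and telescoping to zero in the continuous case, followed by the classical $\pi/n$-shift argument applied to $f'$ (carefully isolating the $R$ bad windows of length $\pi/|n|$ near the jumps) to extract the extra $|n|^{-\eps}$ decay.
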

 
\begin{lemma}\label{l.traceHH}
Suppose $a,b\in PC^{1+\eps}(\T;\Gamma)$ with $\eps>0$ such that $a$ and $b$ do not have discontinuities in common.  
Then  $H(a)H(\tilde{b})$ and $H(\tilde{a})H(b)$ are trace class. 
\end{lemma}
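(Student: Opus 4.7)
The plan is to adapt the classical Basor--Widom partition-of-unity argument from \cite{B1979}, exploiting the specifically piecewise smooth structure to improve on the exponent $\eps>1/2$. The one extra input I rely on beyond the excerpt is the standard fact that $H(c)$ is trace class whenever $c\in C^{1+\eps}(\T)$ for any $\eps>0$. This follows, for instance, from Peller's theorem together with the Besov embedding $C^{1+\eps}=B^{1+\eps}_{\infty,\infty}\hookrightarrow B^1_{1,1}$; alternatively the Fourier-coefficient decay $|c_n|=O(|n|^{-1-\eps})$ from Lemma~\ref{l.Four-asym} can be fed into a direct trace-class estimate.

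First I choose a smooth partition of unity $f+g\equiv 1$ on $\T$ with $f,g\in C^\infty(\T)$ such that $f\equiv 0$ in a neighborhood of the jump set $\Gamma_a\subset\Gamma$ of $a$ and $g\equiv 0$ in a neighborhood of the jump set $\Gamma_b\subset\Gamma$ of $b$; this is possible because $\Gamma_a\cap\Gamma_b=\emptyset$ and both sets are finite. I then decompose $a=af+ag$ and $b=bf+bg$: by construction $af$ and $bg$ extend smoothly across their would-be jumps and belong to $C^{1+\eps}(\T)$, while $ag\in PC^{1+\eps}(\T;\Gamma_a)$ and $bf\in PC^{1+\eps}(\T;\Gamma_b)$ still carry their jumps, but now at disjoint points. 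Expanding bilinearly,
\begin{align*}
H(a)H(\tilde b)=H(af)H(\widetilde{bf})+H(af)H(\widetilde{bg})+H(ag)H(\widetilde{bf})+H(ag)H(\widetilde{bg}),
\end{align*}
and three of the four terms contain either $H(af)$ or $H(\widetilde{bg})$ as a factor. Since $af,bg\in C^{1+\eps}(\T)$, these Hankels are trace class by the input cited above, and the three terms are accordingly trace class.

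The hard part will be the remaining term $H(ag)H(\widetilde{bf})$: both symbols retain their discontinuities, so neither Hankel is individually trace class. My plan for this term is to apply the fundamental identities \eqref{T1}--\eqref{H1} repeatedly so as to extract a trace-class Hankel out of every summand. Specifically, using $H(ag)=T(g)H(a)+H(g)T(\tilde a)$, then $H(\widetilde{bf})=T(\tilde f)H(\tilde b)+H(\tilde f)T(b)$, and finally $H(a)T(\tilde f)=H(af)-T(a)H(f)$, one rewrites
\begin{align*}
H(ag)H(\widetilde{bf})&=T(g)H(af)H(\tilde b)-T(g)T(a)H(f)H(\tilde b)\\
&\quad +T(g)H(a)H(\tilde f)T(b)+H(g)T(\tilde a)H(\widetilde{bf}),
\end{align*}
and each of the four operator products contains one of the trace-class Hankels $H(af)$, $H(f)$, $H(\tilde f)$, $H(g)$, hence is trace class.

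Finally, for the statement about $H(\tilde a)H(b)$: the hypotheses are symmetric in $a$ and $b$ and are preserved under the tilde operation $c\mapsto\tilde c$, which merely relocates the jump set from $\Gamma$ to $\overline{\Gamma}$ without introducing any new common singularities. Applying the same argument to the pair $(\tilde a,\tilde b)$ therefore yields that $H(\tilde a)H(b)$ is trace class as well.
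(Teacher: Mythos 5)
Your plan has the right shape — partition of unity, algebraic manipulation to isolate trace-class Hankel factors — but a key step is off. The claim that $af$ and $bg$ ``belong to $C^{1+\eps}(\T)$'' is in general false. The hypothesis only places $a,b$ in $PC^{1+\eps}(\T;\Gamma)$, so $a$ is $C^{1+\eps}$ only on each arc $(\tau_k,\tau_{k+1})$ and may have a \emph{derivative} discontinuity (a kink) at every point of $\Gamma$, including at points where $a$ is continuous. Your $f$ vanishes near $\Gamma_a$ only; at a point $\tau\in\Gamma_b$ you must have $g\equiv0$, hence $f\equiv1$, there, so $af=a$ near $\tau$ and $af$ inherits $a$'s kink. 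Thus $af$ is merely in $\CpwTG$, not $C^{1+\eps}(\T)$, and the standard $C^{1+\eps}\subset B^1_{1,1}$ input you invoke does not apply. The same objection hits $H(af)$ in the final rewriting of $H(ag)H(\widetilde{bf})$, so the gap runs through both places where $H(af)$ is asserted to be trace class.

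Your fall-back — ``the Fourier-coefficient decay $|c_n|=O(|n|^{-1-\eps})$ can be fed into a direct trace-class estimate'' — does not close the gap either: pointwise Fourier decay at rate $|n|^{-1-\eps}$ does not imply that a Hankel operator is trace class. The condition $\sum_n|n|\,|c_n|<\infty$ would require $\eps>1$, and the genuine Peller criterion is membership in $B^1_{1,1}$, which is an $L^1$-type dyadic condition not controlled by a pointwise bound on $\{c_n\}$ alone (for $\eps<1/2$ one can construct functions with $|c_n|\lesssim|n|^{-1-\eps}$ that are not in $B^1_{1,1}$). What is true, and what your argument actually needs, is that $H(c)$ is trace class whenever $c\in\CpwTG$, i.e.\ is continuous and piecewise $C^{1+\eps}$; this holds (one can verify $\omega_2(c,h)_{L^1}\lesssim h^{1+\eps}$ and hence $c\in B^1_{1,1}$ via the second-order modulus of smoothness), but it is a strictly stronger fact than the $C^{1+\eps}$ case you cite and must be established. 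The paper sidesteps this entirely: instead of decomposing both $a$ and $b$, it inserts $T(\tilde f)+T(\tilde g)=I$ in the middle of $H(a)H(\tilde b)$, reduces to the products $H(af)H(\tilde b)$ and $H(a)H(\tilde g\tilde b)$, and shows each such product is trace class by the elementary Hilbert--Schmidt factorization $H(af)H(\tilde b)=\bigl(H(af)D_\eps\bigr)\bigl(D_{-\eps}H(\tilde b)\bigr)$ with the diagonal weight $D_\eps=\diag((1+j)^{\eps/2})$, using only the crude decay rates $O(|n|^{-1-\eps})$ and $O(|n|^{-1})$ from Lemma~\ref{l.Four-asym}. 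That is both more elementary and exactly calibrated to the classes $\CpwTG$ and $PC^{1+\eps}(\T;\Gamma)$. Either switch to that product argument for the terms $T(g)H(af)H(\tilde b)$ (and for $H(\widetilde{bg})$ appearing in your terms~2 and~4, similarly), or supply and cite a proof that $\CpwTG\subset B^1_{1,1}$ so that the individual Hankels $H(af)$, $H(\widetilde{bg})$ are themselves trace class.
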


\begin{proof}   
By assumption $a$ is continuous on $\T\setminus \Gamma_a$ and $b$ is continuous on $\T\setminus \Gamma_b$ where
$\Gamma_a\cup\Gamma_b\subseteq \Gamma$ and $\Gamma_a$ and $\Gamma_b$ are disjoint. Hence there exists
a partition of unity, $f+g=1$, with both $f,g$ sufficiently smooth such that $af$ and $bg$ are continuous and thus  belong to
$\CpwTG$ (see also \eqref{f.PC1+e}). Consider the first Hankel product (the other one can be dealt with in the same way):
\begin{align*}
H(a)H(\tilde{b}) &= 
H(a)T(\tilde{f})H(\tilde{b}) + H(a)T(\tilde{g})H(\tilde{b}) 
\\
&=
\Big(T(a)H(\tilde{f})-H(af)\Big)H(\tilde{b})+H(a)\Big(H(\tilde{g})T(b)-H(\tilde{g}\tilde{b}) \Big).
\end{align*}
Here we used the identity \eqref{H1}.
Each of the operators appearing therein is bounded and the Hankel operators   $H(\tilde{f})$ and $H(\tilde{g})$ are trace class.
Therefore it suffices to show that the products $H(af)H(\tilde{b})$ and $H(a)H(\tilde{g}\tilde{b})$ are trace class.
Consider the first product (again the second one can be dealt with analogously). We write
$$
H(af)H(\tilde{b})= \Big(H(af)D_\eps\Big)\Big(D_{-\eps} H(\tilde{b})\Big)
$$
where $D_\eps=\diag_{j\ge0}((1+j)^{\eps/2})$ is a diagonal operator on $\ell^2(\Z_+)$.
We claim that both factors are Hilbert-Schmidt, hence their product is trace class, as desired.
Indeed, the Hilbert-Schmidt norms can be estimated as follows:
\begin{align*}
\|H(af)D_\eps\|_2^2  &\le C\sum_{j,k\ge0}(1+j+k)^{-2-2\eps}(1+k)^{\eps} <+\infty,
\\
\|D_{-\eps} H(\tilde{b})\|_2^2 &\le C\sum_{j,k\ge0}(1+j)^{-\eps} (1+j+k)^{-2} <+\infty.
\end{align*}
Therein we used the estimates on the Fourier coefficients for $af\in \CpwTG$ and $b\in PC^{1+\eps}(\T;\Gamma)$
stated in Lemma \ref{l.Four-asym}.
\end{proof}

In anticipation of using the previous trace class condition, we state first a general result.
We may think of the functions $\phi_0, \dots,\phi_R$ as having discontinuities at different locations and being sufficiently 
smooth away from the discontinuities.

\begin{proposition}\label{p3.1b}
Assume $\phi=\phi_0\phi_1\cdots \phi_R$ such that $H(a)H(\tilde{b})$ and $H(\tilde{a})H(b)$ are trace class whenever $a = \phi_{0}\cdots \phi_{k-1}$ and $b = \phi_{k}$, $1\le k\le R$. Then
\begin{align*}
K_1 &=T(\phi)-T(\phi_0)T(\phi_1)\cdots T(\phi_R)
\\
K_2 &=T(\tilde{\phi})-T(\tilde{\phi}_0)T(\tilde{\phi}_1)\cdots T(\tilde{\phi}_R)
\end{align*}
are trace class and 
\begin{align*}
T_n(\phi)=T_n(\phi_0)T_n(\phi_1)\cdots T_n(\phi_R) + P_nK_1P_n+W_nK_2W_n +
C_n
\end{align*}
where $C_n$ tends to zero in the trace norm.
\end{proposition}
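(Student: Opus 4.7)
The plan is to proceed by induction on $R$, peeling off the last factor $\phi_R$ at each step via Widom's identity (Lemma~\ref{widom}) and the Toeplitz counterpart~\eqref{T1}. For $R=0$ everything is trivial ($K_1=K_2=0$, $C_n=0$), and for $R=1$ Widom's identity already gives the stated formula with $C_n=0$, using that $H(\phi_0)H(\tilde\phi_1)$ and $H(\tilde\phi_0)H(\phi_1)$ are trace class by hypothesis.

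For the induction step I would set $\chi = \phi_0\cdots\phi_{R-1}$ and $K_1' = T(\chi) - T(\phi_0)\cdots T(\phi_{R-1})$, $K_2' = T(\tilde\chi) - T(\tilde\phi_0)\cdots T(\tilde\phi_{R-1})$. Applying~\eqref{T1} to $T(\chi\phi_R)$ and to $T(\tilde\chi\tilde\phi_R)$ yields the recursions
\begin{align*}
K_1 &= K_1' T(\phi_R) + H(\chi)H(\tilde\phi_R), \\
K_2 &= K_2' T(\tilde\phi_R) + H(\tilde\chi)H(\phi_R),
\end{align*}
from which the trace-class property of $K_1$ and $K_2$ is immediate from the induction hypothesis, the hypothesis of the proposition (applied with $k=R$), and the fact that the trace class is a two-sided ideal. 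Applying Widom's identity to $T_n(\chi\phi_R)$ and substituting the inductive formula $T_n(\chi) = T_n(\phi_0)\cdots T_n(\phi_{R-1}) + P_n K_1' P_n + W_n K_2' W_n + C_n'$ with $\|C_n'\|_1\to 0$, a short calculation shows that the Hankel terms produced by Widom cancel precisely with the new Hankel pieces of $P_n K_1 P_n$ and $W_n K_2 W_n$ from the recursions above, leaving the remainder
\[
C_n = -P_n K_1' Q_n T(\phi_R) P_n + \bigl[W_n K_2' W_n T_n(\phi_R) - W_n K_2' T(\tilde\phi_R) W_n\bigr] + C_n' T_n(\phi_R),
\]
where $Q_n = I - P_n$.

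To verify $\|C_n\|_1 \to 0$, the first and third summands pose no difficulty: $\|K_1' Q_n\|_1 \to 0$ because $K_1'$ is trace class and $Q_n \to 0$ strongly (take adjoints and apply Lemma~\ref{l1.1b}), while the third summand is dominated by $\|C_n'\|_1\|T(\phi_R)\| \to 0$. The main obstacle is the middle summand, since $W_n \to 0$ only weakly and Lemma~\ref{l1.1b} cannot be applied directly. The key is the commutation identity $W_n T(a) P_n = T_n(\tilde a) W_n$, which follows by combining $W_n T_n(a) W_n = T_n(\tilde a)$ (see~\eqref{WTW}) with $W_n^2 = P_n$ and $W_n P_n = W_n = P_n W_n$. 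This rewrites $W_n K_2' W_n T_n(\phi_R) = W_n K_2' T_n(\tilde\phi_R) W_n$, so the middle summand becomes $W_n K_2'\bigl[T_n(\tilde\phi_R) - T(\tilde\phi_R)\bigr] W_n$. Using $T_n(\tilde\phi_R) - T(\tilde\phi_R) = -Q_n T(\tilde\phi_R) - P_n T(\tilde\phi_R) Q_n$ together with $Q_n W_n = 0$, the second contribution disappears and the bracket reduces to $-W_n K_2' Q_n T(\tilde\phi_R) W_n$, whose trace norm is bounded by $\|K_2' Q_n\|_1 \|T(\tilde\phi_R)\| \to 0$. This closes the induction.
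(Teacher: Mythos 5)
Your proposal is correct and follows essentially the same route as the paper: induction on $R$, Widom's identity to peel off $\phi_R$, the recursions $K_1 = K_1'T(\phi_R) + H(\chi)H(\tilde\phi_R)$ and $K_2 = K_2'T(\tilde\phi_R) + H(\tilde\chi)H(\phi_R)$ for the trace-class property, and the $Q_n$-terms controlled via Lemma~\ref{l1.1b}. The only (superficial) difference is presentational: you isolate the commutation identity $W_n T(a)P_n = T_n(\tilde a)W_n$ and the decomposition $T_n(\tilde\phi_R)-T(\tilde\phi_R)$ as named steps, whereas the paper folds the same manipulation (using \eqref{WTW} and $P_n = I - Q_n$) into a single displayed computation, arriving at the identical remainder $C_n = -P_nK_1'Q_nT(\phi_R)P_n - W_nK_2'Q_nT(\tilde\phi_R)W_n + C_n'T_n(\phi_R)$.
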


\begin{proof}
For $R=0$ there is nothing to prove. The case $R=1$ is settled by the Widom's formula (Lemma \ref{widom}),
$$
T_n(ab)=T_n(a)T_n(b) + P_n H(a) H(\tilde{b})P_n + W_n H(\tilde{a})H(\tilde{b}) W_n.
$$
In view of \eqref{T1} notice that 
$$
H(a) H(\tilde{b})=T(ab)-T(a)T(b),\qquad
H(\tilde{a})H(b)=T(\tilde{a}\tilde{b})-T(\tilde{a})T(\tilde{b}),
$$
which proves the trace class property of $K_1$ and $K_2$ by invoking the assumption with $a=\phi_0$ and $b=\phi_1$.

By way of induction, assume that we have established
$$
T_n(\phi_0\cdots \phi_{R-1})=T_n(\phi_0)\cdots T_n(\phi_{R-1})+P_nK_1'P_n+W_n K_2'W_n + C_n'
$$
with $K_1', K_2'$ being trace class. Applying Widom's formula with $a=\phi_0\cdots \phi_{R-1}$ and $b=\phi_R$
it follows that with $K_{1}'' = H(\phi_0\cdots \phi_{R-1})H(\wt{\phi}_R), K_{2}'' = H(\widetilde{\phi_0\cdots \phi_{R-1}})H(\phi_R),$ 
\begin{align*}
T_n(\phi)
&= T_n(\phi_0\cdots \phi_{R-1})T_n(\phi_R)+P_n K_1''P_n +W_nK_2''W_n
\\
&= 
\Big(T_n(\phi_0)\cdots T_n(\phi_{R-1})+P_nK_1'P_n+W_n K_2'W_n + C_n'\Big) T_n(\phi_R)
\\
&\qquad
+P_n K_1''P_n +W_nK_2''W_n
\\
&=
T_n(\phi_0)\cdots T_n(\phi_{R})+C_n' T_n(\phi_R) +P_n K_1''P_n +W_nK_2''W_n
\\
&\qquad
+P_nK_1'T(\phi_R)P_n-P_nK_1'Q_nT(\phi_R)P_n
\\
&\qquad
+W_nK_2'T(\tilde{\phi}_R)W_n-W_nK_2'Q_nT(\tilde{\phi}_R)W_n.
\end{align*}
In the last two lines we used $P_n=I-Q_n$ and also \eqref{WTW}.
The terms containing $Q_n$ and $C_n'$ converge to zero in the trace norm by Lemma \ref{l1.1b}, so they will make up the term 
$C_n$. The operators $K_{1}''$ and $K_{2}''$ are trace class by assumption.  Consequently we obtain
$$
T_n(\phi)=T_n(\phi_0)T_n(\phi_1)\cdots T_n(\phi_R) + P_nK_1P_n+W_nK_2W_n +
C_n
$$
with 
\begin{align*}
K_1&=K_1''+K_1'T(\phi_R)
\\
&= T(\phi)-T(\phi_0\cdots\phi_{R-1})T(\phi_R)
\\
&\quad +\Big(T(\phi_0\cdots\phi_{R-1})-T(\phi_0)\cdots T(\phi_{R-1})\Big)T(\phi_R)
\\
&= T(\phi)-T(\phi_0)\cdots T(\phi_R)
\end{align*}
as desired.  The trace class property for $K_1$ follows since
$K_1'$ and $K_1''$ are trace class. An analogous argument yields the results for $K_2$.
\end{proof}

Applying Lemma \ref{l.traceHH} to the previous proposition we obtain the following result, which is our first step
towards the Toeplitz determinants with a symbol $\phi$ given by a representation \eqref{f.prod1}.

\begin{proposition}\label{p3.1}
Assume $\phi=\phi_0\phi_1\cdots \phi_R$ where $\phi_k=u_{B_k,\tau_k}$, $1\le k\le R$,  
and $\phi_0\in\CpwTG\NN$ with $\eps>0$. Then
the operators
\begin{align*}
K_1 &=T(\phi)-T(\phi_0)T(\phi_1)\cdots T(\phi_R)
\\
K_2 &=T(\tilde{\phi})-T(\tilde{\phi}_0)T(\tilde{\phi}_1)\cdots T(\tilde{\phi}_R)
\end{align*}
are trace class and 
\begin{align*}
T_n(\phi)=T_n(\phi_0)T_n(\phi_1)\cdots T_n(\phi_R) + P_nK_1P_n+W_nK_2W_n +
C_n
\end{align*}
where $C_n$ tends to zero in the trace norm.
\end{proposition}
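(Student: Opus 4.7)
The plan is to reduce the statement to Proposition \ref{p3.1b}, whose hypothesis requires $H(a)H(\tilde b)$ and $H(\tilde a)H(b)$ to be trace class for $a = \phi_0\phi_1\cdots\phi_{k-1}$ and $b = \phi_k$ whenever $1 \le k \le R$. Once that hypothesis is verified, the decomposition of $T_n(\phi)$ and the trace-class status of $K_1, K_2$ follow verbatim from Proposition \ref{p3.1b}.

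First I would check that every factor, and hence every partial product, lies in $PC^{1+\eps}(\T;\Gamma)\NN$. By \eqref{f.PC1+e} we have $\phi_0 \in \CpwTG\NN \subset PC^{1+\eps}(\T;\Gamma)\NN$. Each jump factor $\phi_k = u_{B_k,\tau_k}$ is, on each arc of $\T \setminus \{\tau_k\}$, the matrix exponential of an affine function of the angular variable, hence $C^\infty$ there; thus $\phi_k \in PC^{1+\eps}(\T;\{\tau_k\})\NN \subset PC^{1+\eps}(\T;\Gamma)\NN$ for any $\eps \in (0,1)$. Since $PC^{1+\eps}(\T;\Gamma)$ is a Banach algebra, the partial products inherit membership in this space. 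Crucially, because $\phi_0$ is continuous on all of $\T$ and each $\phi_j$, $j < k$, is continuous off $\tau_j$, the partial product $a$ can only have discontinuities inside $\{\tau_1,\dots,\tau_{k-1}\}$, which is disjoint from the singular set $\{\tau_k\}$ of $b$.

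Next I would pass the scalar trace-class conclusion of Lemma \ref{l.traceHH} to the block setting. Writing $H(a)$ as the $N \times N$ block operator whose $(p,q)$-block is the scalar Hankel operator $H(a_{pq})$, and similarly for $H(\tilde b)$, one has
$$
\bigl(H(a)H(\tilde b)\bigr)_{pq} = \sum_{r=1}^{N} H(a_{pr})H(\tilde b_{rq}).
$$
For each triple $(p,q,r)$ the scalar entries $a_{pr}$ and $b_{rq}$ belong to $PC^{1+\eps}(\T;\Gamma)$ and have disjoint discontinuity sets by the previous step, so Lemma \ref{l.traceHH} makes every summand trace class; closure of the trace-class ideal under finite sums then yields the trace-class property of $H(a)H(\tilde b)$, and an identical argument handles $H(\tilde a)H(b)$. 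With the hypothesis of Proposition \ref{p3.1b} verified for every $k$, that proposition delivers the claim.

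The genuine analytic content is already distilled in Lemma \ref{l.traceHH}, which uses the Hilbert--Schmidt decay of Fourier coefficients, and in Proposition \ref{p3.1b}, which performs the iterated Widom decomposition. The only obstacle here is organizational: keeping track of which $\tau_k$'s can appear as singularities of which partial product, and packaging the scalar Hankel estimate into its matrix counterpart.
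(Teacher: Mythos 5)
Your proof is correct and mirrors the paper's argument exactly: the paper proves Proposition \ref{p3.1} in one sentence by applying Lemma \ref{l.traceHH} to Proposition \ref{p3.1b}, and you have simply made explicit the two points it leaves implicit, namely that the partial products $a=\phi_0\cdots\phi_{k-1}$ lie in $PC^{1+\eps}(\T;\Gamma)\NN$ with singular set disjoint from $\{\tau_k\}$, and that the scalar trace-class conclusion of Lemma \ref{l.traceHH} passes to the block case by decomposing the block Hankel product into its $N^2$ scalar-block entries.
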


We proceed with the general case and obtain, under certain assumptions, a localization result for 
Toeplitz determinants.

\begin{theorem}\label{t:asym:1}
Assume $\phi=\phi_0\phi_1\cdots \phi_R$ such that $H(a)H(\tilde{b})$ and $H(\tilde{a})H(b)$ are trace class whenever $a = \phi_{0}\cdots \phi_{k-1}$ and $b = \phi_{k}$, $1\le k\le R$. Suppose in addition that the sequence $T_n(\phi_k)$ is  stable for each $0\le k\le R$. Then 
 \begin{align}\label{f.asym2}
\lim_{n\to\infty} 
\frac{\det T_n(\phi)}{\prod_{k=0}^R \det T_n(\phi_k)}
=E
\end{align}
where $E=E_1E_2$ and 
\begin{align*}
E_1 &= \det T(\phi) T(\phi_R)^{-1}\cdots T(\phi_0)^{-1},
\\
E_2 &= \det T(\tilde{\phi}) T(\tilde{\phi}_R)^{-1}\cdots T(\tilde{\phi}_0)^{-1},
\end{align*}
and the operator determinants are well-defined.
\end{theorem}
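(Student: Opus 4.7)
My plan is to build directly on Proposition \ref{p3.1b}, whose hypotheses are exactly the trace class conditions on the $H$-products assumed here, so
\[T_n(\phi) = T_n(\phi_0)\cdots T_n(\phi_R) + P_n K_1 P_n + W_n K_2 W_n + C_n\]
with $K_1,K_2$ trace class and $\|C_n\|_1\to0$. Stability of each $T_n(\phi_k)$ makes the product stable, so for large $n$ the inverse $A_n := T_n(\phi_R)^{-1}\cdots T_n(\phi_0)^{-1}$ exists on $\mathrm{Im}(P_n)$ and is uniformly bounded. Multiplying on the right by $A_n$ turns the ratio in \eqref{f.asym2} into the determinant, on $\mathrm{Im}(P_n)$, of
\[P_n + P_n K_1 P_n A_n + W_n K_2 W_n A_n + C_n A_n.\]
The task is to bring this bracket into the form $P_n + P_n K P_n + W_n L W_n + (\text{trace-norm null})$ required by Lemma \ref{l1.KL}.

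For the $P_n$ block, I would apply Lemma \ref{stab.conv} factor-by-factor to obtain $A_n \to A := T(\phi_R)^{-1}\cdots T(\phi_0)^{-1}$ strongly together with strong convergence of the adjoints. Feeding this together with the trace class operator $K_1$ into Lemma \ref{l1.1b} upgrades the strong convergence to trace norm convergence, so one can rewrite $P_n K_1 P_n A_n = P_n (K_1 A) P_n + o_n^{(1)}$ with $\|o_n^{(1)}\|_1 \to 0$. A direct computation gives $I + K_1 A = T(\phi)\,T(\phi_R)^{-1} \cdots T(\phi_0)^{-1}$, which is identity plus trace class, so the determinant $E_1$ is automatically well defined. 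For the $W_n$ block I use the flip trick: $W_n^2 = P_n$ together with \eqref{WTW} gives $W_n A_n W_n = T_n(\tilde\phi_R)^{-1}\cdots T_n(\tilde\phi_0)^{-1}$, whose stability follows by $W_n$-conjugation of the stability of each $T_n(\phi_k)$. Lemma \ref{stab.conv} then provides strong convergence $W_n A_n W_n \to \tilde A := T(\tilde\phi_R)^{-1}\cdots T(\tilde\phi_0)^{-1}$ (and the same for adjoints). Writing $W_n K_2 W_n A_n = W_n (K_2 \cdot W_n A_n W_n) W_n$ and invoking Lemma \ref{l1.1b} once more converts this into $W_n (K_2 \tilde A) W_n$ modulo a trace-norm-null remainder; here $I + K_2 \tilde A = T(\tilde\phi)\,T(\tilde\phi_R)^{-1}\cdots T(\tilde\phi_0)^{-1}$, again identity plus trace class, giving the well-definedness of $E_2$.

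Since $\|C_n A_n\|_1 \le \|C_n\|_1\|A_n\| \to 0$ handles the final remainder, the bracket now has exactly the shape Lemma \ref{l1.KL} digests, yielding
\[\lim_{n\to\infty} \frac{\det T_n(\phi)}{\prod_{k=0}^R \det T_n(\phi_k)} = \det(I + K_1 A)\,\det(I + K_2 \tilde A) = E_1 E_2.\]
The one genuinely delicate step is the conversion from strong convergence to trace-norm convergence after sandwiching the trace class operators $K_1$ and $K_2$, and the mild care needed in carrying the $W_n$-conjugation through the product of inverses; both are standard applications of Lemmas \ref{stab.conv} and \ref{l1.1b}, and I do not anticipate further obstacles.
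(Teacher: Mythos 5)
Your proof is correct and follows essentially the same route as the paper: both apply Proposition \ref{p3.1b}, form the sequence $T_n(\phi)T_n(\phi_R)^{-1}\cdots T_n(\phi_0)^{-1}$, use Lemma \ref{stab.conv} to get strong convergence of the inverses (with the flip trick $W_n A_n W_n = T_n(\tilde\phi_R)^{-1}\cdots T_n(\tilde\phi_0)^{-1}$ for the $W_n$ block), then Lemma \ref{l1.1b} to upgrade to trace-norm convergence, and finally Lemma \ref{l1.KL}. The only cosmetic difference is that you spell out the uniform boundedness needed to absorb $C_nA_n$ into the trace-norm-null remainder, which the paper treats implicitly.
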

\begin{proof} 
Using Lemma \ref{stab.conv} and noting that both $T_n(\phi)$ and $T_n(\tilde{\phi})=W_n T_n(\phi) W_n$ are stable (see \eqref{WTW})
 it follows that we have strong convergence on $(\ell^2)^N$ of
 $$
T_n(\phi_k)^{-1}\to T(\phi_k)^{-1},\qquad
T_n(\wt{\phi}_k)^{-1}\to T(\wt{\phi}_k)^{-1},
$$
and of the corresponding adjoints. The invertibility of $T(\phi_k)$ and $T(\wt{\phi}_k)$ is guaranteed as well.

Noting that the inverses exist for sufficiently large $n$, we can consider the sequence
$$
A_n = T_n(\phi) T_n(\phi_R)^{-1}\cdots T_n(\phi_0)^{-1}.
$$
From Proposition \ref{p3.1b} (and again  \eqref{WTW}) we can write this as
\begin{align*}
A_n &= 
P_n + P_n K_1P_n T_n(\phi_R)^{-1}\cdots T_n(\phi_0)^{-1}P_n
\\
&\qquad \quad+
W_n K_2 P_n T_n(\wt{\phi}_R)^{-1}\cdots T_n(\wt{\phi}_0)^{-1}W_n + C_n'
\end{align*}
with a certain $C_n'\to 0$ in trace norm and the trace class operators $K_1$ and $K_2$ taken from Proposition \ref{p3.1b}.
Using the strong convergence of the inverses (and their adjoints), it follows from  Lemma \ref{l1.1b} that
\begin{align*}
A_n &=P_n+P_nL_1P_n+W_n L_2W_n +C_n
\end{align*}
with  certain $C_n\to 0$ in trace norm and 
\begin{align*}
L_1 &= K_1 T(\phi_R)^{-1}\cdots T(\phi_0)^{-1}= T(\phi) T(\phi_R)^{-1}\cdots T(\phi_0)^{-1}-I,
\\
L_2 &= K_2 T(\tilde{\phi}_R)^{-1}\cdots T(\tilde{\phi}_0)^{-1}=T(\tilde{\phi}) T(\tilde{\phi}_R)^{-1}\cdots T(\tilde{\phi}_0)^{-1}-I,
\end{align*}
which are both trace class operators. Taking the determinant of $A_n$ and then passing to the limit gives the left hand side of \eqref{f.asym2}.
Invoking Lemma \ref{l1.KL} it follows that the limit equals the product of two well-defined operator determinants,
$$\lim\limits_{n\to\infty}\det A_n=\det(I+L_1)\det(I+L_2).$$
Using the previous expressions for $L_1$ and $L_2$ we arrive at \eqref{f.asym2}.
\end{proof}

\begin{theorem}\label{t:asym:1b}
Assume $\phi=\phi_0\phi_1\cdots \phi_R$ where $\phi_k=u_{B_k,\tau_k}$, $1\le k\le R$, and such that the eigenvalues of $B_{k}\in\C^{N\times N}$ have real parts in the interval $I = (-1/2, 1/2).$ Moreover, suppose also that $\phi_0\in \CpwTG^{N\times N}$ is such that both $T(\phi_0)$ and $T(\tilde{\phi}_0)$ are invertible on $(\ell^2)^N$.
Then 
\begin{align}\label{f.asym1}
\lim_{n\to\infty} 
\frac{\det T_n(\phi)}{G[\phi_0]^n \prod_{k=1}^R \det T_n(\phi_k)}
=E
\end{align}
where $E=E_1E_2E_3$ and 
\begin{align*}
E_1 &= \det T(\phi) T(\phi_R)^{-1}\cdots T(\phi_0)^{-1},
\\
E_2 &= \det T(\tilde{\phi}) T(\tilde{\phi}_R)^{-1}\cdots T(\tilde{\phi}_0)^{-1},
\\
E_3 &= \det T(\phi_{0})T(\phi_{0}^{-1}),
\end{align*}
and $G[\phi_0]$ is defined in \eqref{const.G}.
\end{theorem}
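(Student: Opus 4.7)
The strategy is to split off the smooth factor $\phi_{0}$ and reduce to the combination of Theorem \ref{t:asym:1} with the classical Szeg\H{o}-Widom limit theorem (Theorem \ref{Sz.W}). Concretely, I would write the target ratio as a product of two ratios,
\[
\frac{\det T_{n}(\phi)}{G[\phi_{0}]^{n}\prod_{k=1}^{R}\det T_{n}(\phi_{k})}
=
\frac{\det T_{n}(\phi)}{\prod_{k=0}^{R}\det T_{n}(\phi_{k})}\cdot \frac{\det T_{n}(\phi_{0})}{G[\phi_{0}]^{n}},
\]
and show that the first factor tends to $E_{1}E_{2}$ by Theorem \ref{t:asym:1} and the second to $E_{3}$ by Szeg\H{o}-Widom.

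To apply Theorem \ref{t:asym:1}, two kinds of hypotheses must be verified. First, the trace class conditions: for each $1\le k\le R$, the factor $a=\phi_{0}\cdots\phi_{k-1}$ belongs to $PC^{1+\eps}(\T;\Gamma)\NN$ with discontinuities (at most) among $\tau_{1},\dots,\tau_{k-1}$, since $\phi_{0}\in\CpwTG\NN$ is continuous on $\T$ and each $u_{B_{j},\tau_{j}}$ is smooth off $\tau_{j}$; meanwhile $b=\phi_{k}=u_{B_{k},\tau_{k}}$ has its only discontinuity at $\tau_{k}$. These singular sets being disjoint, applying the scalar Lemma \ref{l.traceHH} to the entries of $a$ and $b$ (whose matrix products and sums stay in $PC^{1+\eps}(\T;\Gamma)$) gives that $H(a)H(\tilde b)$ and $H(\tilde a)H(b)$ are trace class block operators. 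Second, stability: for $1\le k\le R$ the stability of $T_{n}(\phi_{k})$ is provided by Proposition \ref{p.pure}; for $k=0$ it follows from Proposition \ref{stab.cont}, using that $\phi_{0}\in C(\T)\NN$ and both $T(\phi_{0})$ and $T(\tilde\phi_{0})$ are invertible by hypothesis. Thus Theorem \ref{t:asym:1} applies and the first factor converges to $E_{1}E_{2}$.

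For the second factor, I would verify the Szeg\H{o}-Widom hypotheses for $\phi_{0}$. By Lemma \ref{l.Four-asym} the Fourier coefficients of $\phi_{0}\in\CpwTG\NN$ decay as $O(|n|^{-1-\eps})$, which places $\phi_{0}$ comfortably in the class $F\NN$ of Theorem \ref{Sz.W}. The assumed invertibility of $T(\phi_{0})$ on $(\ell^2)^N$ implies, via Proposition \ref{Tiv.C}, that $\det\phi_{0}$ is nonvanishing on $\T$ and has zero winding number, which is precisely what Szeg\H{o}-Widom requires. That theorem then yields
\[
\lim_{n\to\infty}\frac{\det T_{n}(\phi_{0})}{G[\phi_{0}]^{n}}=\det T(\phi_{0})T(\phi_{0}^{-1})=E_{3}.
\]
Multiplying the two limits produces \eqref{f.asym1}. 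I do not expect any substantive obstacle: the theorem is essentially a bookkeeping device that trades the factor $\det T_{n}(\phi_{0})$ supplied by Theorem \ref{t:asym:1} for the simpler exponential factor $G[\phi_{0}]^{n}$, absorbing the discrepancy into the Szeg\H{o}-Widom constant $E_{3}$.
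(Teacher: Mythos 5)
Your proof is correct and follows exactly the same route as the paper: apply Theorem \ref{t:asym:1} (after verifying the trace class hypotheses via Lemma \ref{l.traceHH} and the stability hypotheses via Propositions \ref{p.pure} and \ref{stab.cont}) to obtain $E_1E_2$, then trade $\det T_n(\phi_0)$ for $G[\phi_0]^n E_3$ via the Szeg\H{o}-Widom theorem. You are in fact slightly more explicit than the paper in spelling out why $\det\phi_0$ is nonvanishing with winding number zero (via Proposition \ref{Tiv.C} and $\ind T(\phi_0)=0$) and why $\CpwTG\subset F$.
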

\begin{proof}
Applying Lemma \ref{l.traceHH} and noting that $\phi_k \in PC^{1+\eps}(\T;\Gamma)\NN$, we conclude that the 
assumption about the product of the Hankel operators in Theorem \ref{t:asym:1} are fulfilled. Moreover, the assumption on 
$B_k$ implies that the sequence $T_n(\phi_k)$ is stable by Proposition \ref{p.pure}. On the other hand,
the stability of the sequence $T_n(\phi_0)$ is due to Proposition \ref{stab.cont}.

Applying the Szeg\H{o}-Widom limit theorem (Theorem \ref{Sz.W}) gives the asymptotics 
$\det T_n(\phi_0)\sim G[\phi_0]^n E_3$. Notice that $\CpwTG\subset F$ and therefore the smoothness conditions are fulfilled.
\end{proof}

The asymptotics of the product term in the denominator of \eqref{f.asym1} can be stated explicitly using Proposition \ref{p.pure}.
Indeed, for $1\le k\le R$, let $\beta^{(j)}_k$ ($1\le j\le N$) be the eigenvalues of $B_k$, multiplicities taken into account.
Then
\begin{align}\label{f4.3}
\prod_{k=1}^R \det T_n(\phi_k)
&\sim
n^\Omega E_4
\end{align}
with 
\begin{align}\label{f4.4}
\Omega &= - \sum_{k=1}^R\sum_{j=1}^N (\beta^{(j)}_k)^2
\\ \label{f4.5}
E_4 &= \prod_{k=1}^R\prod_{j=1}^N G(1+\beta^{(j)}_k)G(1-\beta^{(j)}_k).
\end{align}

In  the asymptotic formulas below, we will keep the determinant product on the left hand side of \eqref{f4.3}
for sake of simplicity.

\subsection{An operator determinant identity}

Our next goal is to express the operator determinant $E_2$ in a different way in order to be able to combine it with the expressions for $E_1$ and $E_3$ and write $E$ as a single operator determinant.

As a lemma we need a relationship between the inverses of $T(\ta)$ and $T(a^{-1})$, 
which is in some sense already suggested by Proposition \ref{p.T-Fred} and its proof.

\begin{lemma}\label{lem.TinvH}
Let $a\in L^\infty(\T)\NN$ be invertible. Then $T(\ta)$ is invertible on $(\ell^2)^N$ if and only if
$T(a^{-1})$ is invertible on $(\ell^2)^N$. Moreover,
\begin{align}\label{f.TivH}
0 &=T(\tilde{a})^{-1}H(\tilde{a})+ H(\tilde{a}^{-1})T(a^{-1})^{-1}
\end{align}
and 
\begin{align}\label{f.Tiv2}
T(a^{-1})^{-1} &=T(a)-H(a)T(\ta)^{-1}H(\ta).
\end{align}
\end{lemma}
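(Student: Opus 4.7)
The plan is to bootstrap from the two fundamental identities \eqref{T1} and \eqref{H1}, together with the equivalence-after-extension relation already exploited in the proof of Proposition \ref{p.T-Fred}.

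First I would dispose of the invertibility equivalence. The proof of Proposition \ref{p.T-Fred} actually establishes that $T(\tilde a)$ and $T(a^{-1})$ are equivalent after extension in the sense that
\[
T(\tilde a)\oplus I_{Z_1} = E_1\bigl(T(a^{-1})\oplus I_{Z_2}\bigr) E_2
\]
with $E_1, E_2$ invertible. Since $A\oplus I$ is invertible if and only if $A$ is, this yields the equivalence of invertibility of $T(\tilde a)$ and $T(a^{-1})$ on $(\ell^2)^N$. This legitimizes multiplying by $T(\tilde a)^{-1}$ and $T(a^{-1})^{-1}$ in what follows.

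Next I would derive \eqref{f.TivH}. Specializing the Hankel identity \eqref{H1} with $a\mapsto\tilde a$ and $b\mapsto \tilde a^{-1}$, and observing that $\widetilde{\tilde a^{-1}}=a^{-1}$ and $\tilde a\cdot \tilde a^{-1}=1$, one gets
\[
0 = H(1) = T(\tilde a)H(\tilde a^{-1}) + H(\tilde a)T(a^{-1}).
\]
Multiplying this relation on the left by $T(\tilde a)^{-1}$ and on the right by $T(a^{-1})^{-1}$ produces exactly \eqref{f.TivH}.

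Finally, for \eqref{f.Tiv2}, I would specialize the Toeplitz identity \eqref{T1} to $a\mapsto a$, $b\mapsto a^{-1}$ to obtain
\[
I = T(a)T(a^{-1}) + H(a)H(\tilde a^{-1}),
\]
multiply on the right by $T(a^{-1})^{-1}$, and rearrange to reach
\[
T(a^{-1})^{-1} = T(a) + H(a)\bigl[H(\tilde a^{-1})T(a^{-1})^{-1}\bigr].
\]
The bracketed factor equals $-T(\tilde a)^{-1}H(\tilde a)$ by the just-established \eqref{f.TivH}, and substituting this in yields \eqref{f.Tiv2}. I do not anticipate any real obstacle: everything is a direct algebraic consequence of the two fundamental identities and the equivalence-after-extension argument, with no analytic estimation needed.
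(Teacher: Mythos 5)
Your proposal is correct, and the algebra for \eqref{f.TivH} and \eqref{f.Tiv2} is essentially the same as in the paper: both you and the paper specialize \eqref{H1} to get $0=T(\ta)H(\ta^{-1})+H(\ta)T(a^{-1})$ and specialize \eqref{T1} to get $I=T(a)T(a^{-1})+H(a)H(\ta^{-1})$, then combine. The structural difference is in how the invertibility equivalence is handled. You outsource it to the equivalence-after-extension relation constructed in the proof of Proposition~\ref{p.T-Fred}, which indeed preserves invertibility, so this is a legitimate and economical shortcut; it also lets you treat both operators as invertible from the start and derive \eqref{f.Tiv2} by substituting \eqref{f.TivH}. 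The paper instead proves the invertibility transfer constructively: assuming only $T(\ta)$ invertible, it verifies that $T(a)-H(a)T(\ta)^{-1}H(\ta)$ is a two-sided inverse of $T(a^{-1})$, which establishes invertibility of $T(a^{-1})$ and \eqref{f.Tiv2} in one stroke (and then argues symmetrically). The paper's route is self-contained and makes the formula for the inverse do the work, while yours is slightly shorter but leans on the earlier proposition; both are valid.
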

\begin{proof}
By \eqref{H1} we have the identity
$$
0=T(\ta)H(\ta^{-1})+H(\ta) T(a^{-1}).
$$
Assuming invertibility of both $T(\ta)$ and $T(a^{-1})$, the first identity \eqref{f.TivH} follows. Now assume 
that only $T(\ta)$ is invertible. Then, using \eqref{T1} and what we just stated,
\begin{align*}
\Big(T(a)-H(a)T(\ta)^{-1}H(\ta)\Big)T(a^{-1})
&=
T(a)T(a^{-1})+H(a)T(\ta)^{-1}T(\ta)H(\ta^{-1})
\\
&=
T(a)T(a^{-1})+H(a)H(\ta^{-1})=I.
\end{align*}
In a similar manner, multiplication from the other side gives the identity. We conclude that if $T(\tilde{a})$ is invertible, then 
$T(a^{-1})$ is invertible and its inverse is given by \eqref{f.Tiv2}.
Finally, we can make an analogous argument that the invertibility of $T(a^{-1})$ implies the invertibility of $T(\ta)$.
\end{proof}

\begin{proposition}\label{id:det-2}
Let $a,b\in L^\iy(\T)^{N\times N}$ be such that the operators $T(\ta)$ and $T(\tb)$ are invertible on $(\ell^2)^N$. Assume moreover that
$H(\ta)H(b)$ and $H(b^{-1})H(\ta^{-1})$ are trace class. Then the following two operator determinants are well-defined and coincide:
\begin{equation}\label{det-det-2}
\det T(\ta\tb) T(\tb)\iv T(\ta)\iv = \det  T(a\iv)\iv T(b\iv)\iv T(b\iv a\iv).
\end{equation}
\end{proposition}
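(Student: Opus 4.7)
The plan is to verify that both determinants are of the form $\det(I+\text{trace class})$ and then transform one into the other by a short chain of cyclic rearrangements and applications of the two ``swap'' identities coming from Lemma \ref{lem.TinvH} (and its dual). Using \eqref{T1} I first rewrite
\begin{align*}
T(\ta\tb)T(\tb)\iv T(\ta)\iv - I &= H(\ta)H(b)\,T(\tb)\iv T(\ta)\iv,
\\
T(a\iv)\iv T(b\iv)\iv T(b\iv a\iv) - I &= T(a\iv)\iv T(b\iv)\iv\, H(b\iv)H(\ta\iv),
\end{align*}
where the invertibility of $T(a\iv)$ and $T(b\iv)$ follows from the invertibility of $T(\ta),T(\tb)$ via Lemma \ref{lem.TinvH}. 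By the two trace class hypotheses and the two-sided ideal property of $S_1$, both expressions above are genuinely trace class, so the two operator determinants are well-defined.

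Next I will use two intertwining formulas. The first is \eqref{f.TivH} from Lemma \ref{lem.TinvH}, namely $T(\ta)\iv H(\ta) = -H(\ta\iv)T(a\iv)\iv$. The second is its ``dual'', obtained by applying \eqref{H1} to $b\iv\cdot b = 1$, which gives $T(b\iv)H(b)+H(b\iv)T(\tb)=0$ and hence
\begin{equation*}
H(b)T(\tb)\iv = -T(b\iv)\iv H(b\iv).
\end{equation*}
With these in hand, a short chain of cyclic rotations (each of which remains inside $S_1$, because the ``trace class core'' $H(\ta)H(b)$ or $H(b\iv)H(\ta\iv)$ travels along with the surrounding bounded factors) yields
\begin{align*}
\det T(\ta\tb)T(\tb)\iv T(\ta)\iv
&= \det\bigl(I+T(\tb)\iv T(\ta)\iv H(\ta)H(b)\bigr)
\\
&= \det\bigl(I-T(\tb)\iv H(\ta\iv)T(a\iv)\iv H(b)\bigr)
\\
&= \det\bigl(I-H(b)T(\tb)\iv H(\ta\iv)T(a\iv)\iv\bigr)
\\
&= \det\bigl(I+T(b\iv)\iv H(b\iv)H(\ta\iv)T(a\iv)\iv\bigr)
\\
&= \det\bigl(I+T(a\iv)\iv T(b\iv)\iv H(b\iv)H(\ta\iv)\bigr),
\end{align*}
and the last expression equals the RHS after reassembling $T(b\iv)T(a\iv)+H(b\iv)H(\ta\iv) = T(b\iv a\iv)$ by \eqref{T1}.

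The only real difficulty is bookkeeping: each $\det(I+XY)=\det(I+YX)$ step requires both $XY$ and $YX$ to be in $S_1$, and at the intermediate stage (after one swap identity has been applied but before the other) the trace class character of the operator is only visible after rewriting it via the second swap identity. Thus the two halves of the computation must be braided together, with both Lemma \ref{lem.TinvH} and its dual brought in at the right moments. Once that is done, the identity is a purely algebraic consequence of \eqref{T1}, \eqref{H1}, the cyclic Fredholm determinant identity, and the two swap formulas.
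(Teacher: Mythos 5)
Your proof is correct and rests on exactly the same ingredients as the paper's: the Widom identity \eqref{T1} to expose the trace class cores $H(\ta)H(b)$ and $H(b\iv)H(\ta\iv)$, the intertwining relation \eqref{f.TivH} together with its ``dual'' $H(b)T(\tb)\iv=-T(b\iv)\iv H(b\iv)$ (which, as you note, comes from $H(b\iv b)=0$ via \eqref{H1}, or equivalently is \eqref{f.TivH} applied to $\tb$), and the cyclicity $\det(I+AB)=\det(I+BA)$ for trace class $AB,BA$. So the approach is essentially the paper's.

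Where you diverge is in the ordering of the algebraic moves, and this is exactly what generates the ``braiding'' difficulty you flag at the end. You interleave the two swap identities with two cyclic shifts, so that at the intermediate stage --- after $\det\bigl(I - T(\tb)\iv H(\ta\iv)T(a\iv)\iv H(b)\bigr)$ and before the second swap --- the operator $H(b)T(\tb)\iv H(\ta\iv)T(a\iv)\iv$ is not manifestly trace class; one has to peek ahead and apply the second swap to see that it is. The paper's proof instead applies \emph{both} intertwining relations in one step, which is a pure rewrite of the inside of the determinant and raises no trace class issue, arriving directly at $\det\bigl(I+H(\ta\iv)T(a\iv)\iv T(b\iv)\iv H(b\iv)\bigr)$. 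A single cyclic shift then finishes the argument, and at that point both of the compositions $AB$ and $BA$ one needs to be trace class are trace class for immediate reasons: one because $H(\ta)H(b)\in S_1$ (via the swaps run in reverse), the other because $H(b\iv)H(\ta\iv)\in S_1$ by hypothesis. In short, applying both swaps \emph{before} any cyclic shift makes the trace class bookkeeping trivial and removes the need to ``braid''; if you keep your ordering, you should spell out explicitly that $H(b)T(\tb)\iv H(\ta\iv)T(a\iv)\iv = -T(b\iv)\iv H(b\iv)H(\ta\iv)T(a\iv)\iv$ is trace class before invoking cyclicity at that step.
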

\begin{proof}
Note that the invertibility of the operators $T(\ta)$ and $T(\tb)$ implies the invertibility of the symbols $\ta$ and $\tb$. Hence
$a^{-1},b^{-1}\in L^\iy(\T)\NN$ as well. 
Lemma \ref{lem.TinvH} now implies that the operators $T(a\iv)$ and $T(b\iv)$ are invertible.
Applying \eqref{T1} yields
\begin{align*}
T(\ta\tb) &=T(\ta)T(\tb)+H(\ta)H(b),\\
T(b\iv a\iv) &=T(b\iv)T(a\iv)+H(b\iv)H(\ta\iv),
\end{align*}
and multiplying with the appropriate inverses we conclude that the operator products in \eqref{det-det-2} are of the form
identity plus trace class. Hence both determinants are well-defined. As we can move invertible operators from one side to the other we
see that 
\begin{align*}
\det T(\ta\tb) T(\tb)\iv  T(\ta)\iv &=\det \Big(I+T(\ta)\iv H(\ta)H(b)T(\tb)\iv\Big),\\
\det T(a\iv)\iv T(b\iv)\iv T(b\iv a\iv) &=\det \Big(I+T(b\iv)\iv H(b\iv)H(\ta\iv)T(a\iv)\iv\Big).
\end{align*}
In order to verify that the last two determinants are the same, use formula \eqref{f.TivH} twice and also employ the general formula
$\det (I+AB)=\det (I+BA)$, which holds for bounded Hilbert space operators $A$ and $B$ whenever
both $AB$ and $BA$ are trace class.
\end{proof}

Our next goal is to obtain the following generalization of the previous determinant identity, 
\begin{equation}\label{det-det-?}
\det T(\tp)T(\tp_R)^{-1} \cdots T(\tp_0)^{-1} = 
\det T(\phi_0^{-1})^{-1}\cdots T(\phi_R^{-1})^{-1}T(\phi^{-1}),
\end{equation}
where $\phi=\phi_0\phi_1\cdots \phi_R$.
Unfortunately, we are only able to prove this identity under assumptions which are stronger than those
one would suspect to be necessary. This raises the question on the range of validity of this identity
and makes further investigations desirable.

Despite its limitations and its cumbersome formulation, the following result will be just sufficient for our purpose of
dealing with the piecewise continuous case and applying it to the determinants that occur in Theorem \ref{t:asym:1b}.

\begin{theorem}\label{id:det-R}
Let $\phi_0\in L^\iy(\T)\NN$, $\sigma_1,\dots,\sigma_R\in L^\iy(\T)\NN$, and put
$$
\phi_{k,\la}=\exp(\la \sigma_k), \qquad 1\le k\le R.
$$
Let $U_1,\dots,U_R\subseteq \C$ be open connected subsets containing the origin and assume that 
\begin{itemize}
\item[(i)]
$T(\tp_0)$ is invertible on $(\ell^2)^N$;
\item[(ii)]
$T(\tp_{j,\la})$ are invertible  on $(\ell^2)^N$ for $\la\in U_j$ and each $1\le j\le R$;
\item[(iii)]
the operators $H(\ta)H(b)$ and $H(b^{-1})H(\ta^{-1})$ are trace class whenever
$$
a=\phi_0\cdot \phi_{1,\la_1}\cdot \phi_{2,\la_2}\,\cdots\,\phi_{k-1,\la_{k-1}},\qquad b=\phi_{k,\la_k}
$$
where $\la_j\in U_j$ for $1\le j\le k\le R$.
\end{itemize}
Then the following two operator determinants,
\begin{align*}
f_R(\la_1,\dots,\la_R) &=\det T(\tp_0 \tp_{1,\la_1}\cdots\,\tp_{R,\la_{R}}) T(\tp_{R,\la_R})\iv \cdots T(\tp_{1,\la_1})\iv T(\tp_0)\iv,
\\
g_R(\la_1,\dots,\la_R) &=\det 
T(\phi_0\iv)\iv T(\phi_{1,\la_1}\iv)\iv\cdots T(\phi_{R,\la_R}\iv)\iv 
T(\phi_{R,\la_R}\iv \cdots \phi_{1,\la_1}\iv \phi_0\iv),
\end{align*}
are well-defined (for $\la_j\in U_j$, $1\le j\le R$) and coincide.
\end{theorem}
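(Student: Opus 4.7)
The plan is to proceed by induction on $R$, combined with an analytic-continuation argument to circumvent the difficulty that the hypotheses do not grant invertibility of Toeplitz operators with partial-product symbols. First I would verify that $f_R$ and $g_R$ are well-defined holomorphic functions on $U_1\times\cdots\times U_R$: by Proposition \ref{p3.1b}, assumption (iii) implies that
$$T(\tilde\phi)-T(\tilde\phi_0)T(\tilde\phi_{1,\lambda_1})\cdots T(\tilde\phi_{R,\lambda_R})$$
is trace class and depends holomorphically on the parameters (since $\phi_{j,\lambda_j}=\exp(\lambda_j\sigma_j)$ is holomorphic in $\lambda_j$ as an $L^\infty$-valued map), so the operator inside the determinant defining $f_R$ is identity plus a holomorphic trace-class family, and the same applies to $g_R$. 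Because $U_1\times\cdots\times U_R$ is connected, by the identity theorem for holomorphic functions of several variables it suffices to prove $f_R=g_R$ on some neighborhood $V_1\times\cdots\times V_R$ of the origin.

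In a sufficiently small such neighborhood every partial product $\psi_k:=\phi_0\phi_{1,\lambda_1}\cdots\phi_{k,\lambda_k}$ is close in $L^\infty$ to $\phi_0$; since $T(\tilde\phi_0)$ is invertible by (i) and $T(\phi_0^{-1})$ is invertible by Lemma \ref{lem.TinvH}, continuity delivers the invertibility of $T(\tilde\psi_k)$ and $T(\psi_k^{-1})$ throughout $V_1\times\cdots\times V_R$ for every $1\le k\le R$. I would then induct on $R$. The base case $R=1$ is Proposition \ref{id:det-2} applied to $a=\phi_0$, $b=\phi_{1,\lambda_1}$. For the inductive step, Proposition \ref{id:det-2} applied to $a=\psi_{R-1}$, $b=\phi_{R,\lambda_R}$---whose trace-class hypotheses constitute the $k=R$ case of (iii)---yields the scalar identity
$$\alpha:=\det\,T(\tilde\phi)T(\tilde\phi_{R,\lambda_R})^{-1}T(\tilde\psi_{R-1})^{-1}=\det\,T(\psi_{R-1}^{-1})^{-1}T(\phi_{R,\lambda_R}^{-1})^{-1}T(\phi^{-1})=:\beta.$$

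Next I would factor $f_R$ as $\alpha\cdot F_{R-1}$ and $g_R$ as $G_{R-1}\cdot\beta$, where $F_{R-1}$ and $G_{R-1}$ denote the $f$- and $g$-quantities for the $(R-1)$-parameter problem with product $\psi_{R-1}$ and parameters $(\lambda_1,\ldots,\lambda_{R-1})$. Inserting $I=T(\tilde\psi_{R-1})^{-1}T(\tilde\psi_{R-1})$ between $T(\tilde\phi_{R,\lambda_R})^{-1}$ and $T(\tilde\phi_{R-1,\lambda_{R-1}})^{-1}$ inside $f_R$ splits the operator as $A\cdot B$ with $A=T(\tilde\phi)T(\tilde\phi_{R,\lambda_R})^{-1}T(\tilde\psi_{R-1})^{-1}$ and $B=T(\tilde\psi_{R-1})T(\tilde\phi_{R-1,\lambda_{R-1}})^{-1}\cdots T(\tilde\phi_0)^{-1}$. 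Both are of the form identity plus trace class by Proposition \ref{p3.1b}---applied to the two-factor product $\psi_{R-1}\cdot\phi_{R,\lambda_R}$ for $A$ (using the $k=R$ case of (iii)) and to $\psi_{R-1}$ for $B$ (using (iii) for $1\le k\le R-1$). Multiplicativity of the Fredholm determinant then gives $f_R=\det A\cdot\det B=\alpha\cdot F_{R-1}$, and the symmetric manipulation yields $g_R=G_{R-1}\cdot\beta$. The hypotheses of the theorem are inherited verbatim by the $(R-1)$-parameter problem, so the inductive hypothesis delivers $F_{R-1}=G_{R-1}$; combined with $\alpha=\beta$ we conclude $f_R=g_R$ on $V_1\times\cdots\times V_R$, and analytic continuation extends the equality to all of $U_1\times\cdots\times U_R$.

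The main technical nuisance I expect is the careful bookkeeping of which operator differences are trace class and which Toeplitz operators are invertible, as needed to justify each Fredholm-determinant factorization; once all such identifications are systematically traced back to Proposition \ref{p3.1b} and Lemma \ref{lem.TinvH}, however, the argument is largely mechanical.
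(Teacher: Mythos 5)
Your proposal is correct and follows essentially the same strategy as the paper's proof: establish well-definedness of the two operator determinants via Proposition \ref{p3.1b} and assumption (iii), reduce to a small polydisc near the origin by analyticity in the parameters, observe that near the origin the intermediate Toeplitz operators $T(\tilde\psi_k)$ and $T(\psi_k^{-1})$ are invertible by norm-perturbation off $T(\tilde\phi_0)$ and $T(\phi_0^{-1})$, and then split each determinant into a factor handled by Proposition \ref{id:det-2} (with $a=\psi_{R-1}$, $b=\phi_{R,\lambda_R}$) times the $(R-1)$-variable quantity, closing the induction. The only cosmetic difference is that the paper phrases the analytic-continuation step as an iterated one-variable identity-theorem argument (separate analyticity in each $\lambda_j$ over the connected sets $U_j$), whereas you invoke the several-variables identity theorem directly; both routes are fine and reach the same conclusion.
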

\begin{proof}
Let us first note that there is nothing to prove for $R=0$. Moreover, for $R=1$, the assertions follow directly from 
Proposition \ref{id:det-2}.

Next notice that the inverse involved in these two operator determinants exist.
This is due to assumptions (i) and (ii) and the application of  Lemma \ref{lem.TinvH}.

In order to see that the determinants are well defined we need to confirm that the underlying operators 
are of the form identity plus trace class. Equivalently, for the first determinant, 
$$
T(\tp_0 \tp_{1,\la_1}\cdots\tp_{R,\la_{R}}) = T(\tp_0) T(\tp_{1,\la_1})\cdots\,T(\tp_{R,\la_{R}}) +
\mbox{trace class}.
$$
This can be shown by induction on $R$, where the case $R=1$ is settled by assumption (iii) with $k=1$.
Assuming $R\ge 2$, we can pass from $R-1$ to $R$ by writing
\begin{align*}
T(\tp_0 \tp_{1,\la_1}\cdots\tp_{R,\la_{R}}) 
&= T(\tp_0 \tp_{1,\la_1}\cdots\tp_{R,\la_{R-1}})T(\tp_{R,\la_{R}})
\\
&\qquad +H(\tp_0 \tp_{1,\la_1}\cdots\tp_{R,\la_{R-1}})
H(\phi_{R,\la_R})
\end{align*}
where we used \eqref{T1}. The product of the Hankel operators is trace class by assumption (iii) with $k=R$.
Now apply the induction hypothesis to the term $T(\tp_0 \tp_{1,\la_1}\cdots\tp_{R,\la_{R-1}})$ to see that it 
is $T(\tp_0) T( \tp_{1,\la_1})\cdots T(\tp_{R,\la_{R-1}})$ plus trace class. 
This concludes the proof that the first operator determinant is well-defined, and the arguments for the second one
are similar.

It remains to show that 
$$
f_R(\la_1,\dots,\la_R)=g_R(\la_1,\dots,\la_R)
$$
for $(\la_1,\dots,\la_R)\in U_1\times \dots\times U_R$.
We notice that the functions $f_R(\la_1,\dots,\la_R)$ and $g_R(\la_1,\dots,\la_R)$
depend analytically on each of the variables $\la_1\in U_1,\dots, \la_R\in U_R$.
More specifically we have analyticity in any one of these variables while keeping the others fixed.
Because the sets $U_1,\dots, U_R$ are open and connected and contain the origin, it is sufficient to show that the two functions coincide when
$(\la_1,\dots,\la_R)\in D_\eps\times \dots \times D_\eps$ where 
$D_\eps=\{\la\in\C: |z|<\eps\}$ and $\eps>0$ can be arbitrarily small.

Under the assumption that $|\la_k|<\eps$ (and having chosen $\eps$ sufficiently small) we can assume that the operators
$$
T(\tp_0\tp_{1,\la_1}\cdots \tp_{k,\la_{k}})
$$
and
$$
T(\phi_{k,\la_{k}}\iv\cdots \phi_{1,\la_1}\iv\phi_0\iv)
$$
(with $1\le k\le R$) are invertible since they  are sufficiently close to the operators $T(\tp_0)$ and $T(\phi_0\iv)$  in operator norm,
and the latter are assumed to be invertible.
This enables us to prove the identity via induction. Indeed, we can split
\begin{align*}
\lefteqn{\det  T(\tp_0 \tp_{1,\la_1}\cdots\tp_{R,\la_{R}}) T(\tp_{R,\la_R})\iv \cdots T(\tp_{1,\la_1})\iv T(\tp_0)\iv}\qquad
&\\[1ex]
&= \det T(\tp_0 \tp_{1,\la_1}\cdots\tp_{R,\la_{R}}) T(\tp_{R,\la_R})\iv T(\tp_0 \tp_{1,\la_1}\cdots \tp_{R-1,\la_{R-1}})\iv
\\
&\qquad\times
\det T(\tp_0 \tp_{1,\la_1}\cdots\,\tp_{R-1,\la_{R-1}}) T(\tp_{R-1,\la_R-1})\iv\cdots T(\tp_{1,\la_1})\iv T(\tp_0)\iv,
\\[2ex]
\lefteqn{\det 
T(\phi_0\iv)\iv T(\phi_{1,\la_1}\iv)\iv\cdots T(\phi_{R,\la_R}\iv)\iv 
T(\phi_{R,\la_R}\iv \cdots \phi_{1,\la_1}\iv \phi_0\iv)}\qquad
&\\[1ex]
&= 
\det T(\phi_0\iv)\iv  T(\phi_{1,\la_1}\iv)\iv\cdots T(\phi_{R-1,\la_{R-1}}\iv)\iv 
T(\phi_{R-1,\la_{R-1}}\iv \cdots \phi_{1,\la_1}\iv \phi_0\iv)
\\
&\qquad\times 
\det T(\phi_{R-1,\la_{R-1}}\iv \cdots \phi_{1,\la_1}\iv \phi_0\iv)\iv  T(\phi_{R,\la_R}\iv)\iv 
T(\phi_{R,\la_R}\iv \cdots \phi_{1,\la_1}\iv \phi_0\iv).
\end{align*}
Invoking Proposition \ref{id:det-2} and the induction hypothesis proves the identity.
\end{proof}

\begin{corollary}\label{c4.9}
Assume $\phi=\phi_0\phi_1\cdots \phi_R$ where $\phi_k=u_{B_k,\tau_k}$, $1\le k\le R$, and such that the eigenvalues of $B_{k}\in\C^{N\times N}$ have real parts in the interval $I = (-1/2, 1/2).$ Moreover, suppose also that $\phi_0\in \CpwTG^{N\times N}$ is such that both $T(\phi_0)$ and $T(\tilde{\phi}_0)$ are invertible on $(\ell^2)^N$.
Then 
\begin{align}\label{f.asym3}
\lim_{n\to\infty} 
\frac{\det T_n(\phi)}{G[\phi_0]^n \prod_{k=1}^R \det T_n(\phi_k)}
=E
\end{align}
where
\begin{align}\label{E.det}
E = \det T(\phi) T\iv(\phi_R) \cdots T\iv(\phi_1)
T\iv(\phi_1\iv)\cdots T\iv(\phi_R\iv) T(\phi\iv).
\end{align}
\end{corollary}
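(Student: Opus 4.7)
My plan is to apply Theorem~\ref{t:asym:1b} to obtain the asymptotic constant in the form $E = E_1 E_2 E_3$, then to use the operator-determinant identity of Theorem~\ref{id:det-R} to rewrite $E_2$ in terms of the inverse symbols, and finally to combine the three determinants into a single one by telescoping.

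All hypotheses of Theorem~\ref{t:asym:1b} are met by assumption, so it gives
\[
\lim_{n\to\infty}\frac{\det T_n(\phi)}{G[\phi_0]^n\prod_{k=1}^R \det T_n(\phi_k)}=E_1 E_2 E_3,
\]
with $E_1, E_2, E_3$ exactly as defined there. To re-express $E_2$, I would apply Theorem~\ref{id:det-R} with $\sigma_k(e^{i\theta})=iB_k\arg(-e^{i\theta}/\tau_k)$, so that $\phi_{k,\la}:=\exp(\la\sigma_k)=u_{\la B_k,\tau_k}$ and in particular $\phi_{k,1}=\phi_k$. For each $k$ I choose $U_k\subset\C$ open and connected containing $[0,1]$ and small enough that the real parts of the eigenvalues of $\la B_k$ remain in $(-1/2,1/2)$ for every $\la\in U_k$; such a $U_k$ exists since this is an open condition that holds at both $\la=0$ and $\la=1$.

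Hypothesis~(i) of Theorem~\ref{id:det-R} is the assumption that $T(\tilde\phi_0)$ is invertible; hypothesis~(ii) follows from Proposition~\ref{p.pure} together with the identity $\widetilde{u_{B,\tau}}=u_{-B,\bar\tau}$; and hypothesis~(iii) reduces to Lemma~\ref{l.traceHH}, since $a=\phi_0\phi_{1,\la_1}\cdots\phi_{k-1,\la_{k-1}}$ has discontinuities only in $\{\tau_1,\dots,\tau_{k-1}\}$ while $b=\phi_{k,\la_k}$ has its single jump at $\tau_k$, and the same disjointness holds for $a^{-1}$ and $b^{-1}$. Evaluating the identity $f_R=g_R$ at $\la_1=\cdots=\la_R=1$ and using $\phi^{-1}=\phi_R^{-1}\cdots\phi_1^{-1}\phi_0^{-1}$ then yields
\[
E_2=\det\bigl( T(\phi_0^{-1})^{-1}T(\phi_1^{-1})^{-1}\cdots T(\phi_R^{-1})^{-1}T(\phi^{-1})\bigr).
\]

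Finally, each of $E_1, E_2, E_3$ is the determinant of an operator of the form $I+(\text{trace class})$, so their product equals the determinant of the product of the corresponding operators. Arranged in the order $E_1\cdot E_3\cdot E_2$, this product reads
\[
T(\phi)T(\phi_R)^{-1}\cdots T(\phi_1)^{-1}\bigl[T(\phi_0)^{-1}T(\phi_0)\bigr]\bigl[T(\phi_0^{-1})T(\phi_0^{-1})^{-1}\bigr]T(\phi_1^{-1})^{-1}\cdots T(\phi_R^{-1})^{-1}T(\phi^{-1}),
\]
in which both bracketed factors equal the identity, giving precisely the operator in \eqref{E.det}. I expect the main obstacle to be the verification of hypothesis~(iii) of Theorem~\ref{id:det-R}: the Hankel trace-class property must hold \emph{uniformly} as each $\la_k$ ranges over a neighborhood of $[0,1]$. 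This is tractable because each $\phi_{k,\la}$ stays in $PC^{1+\eps}(\T;\{\tau_k\})\NN$ with its only jump at $\tau_k$ regardless of $\la$, so the disjoint-discontinuity assumption in Lemma~\ref{l.traceHH} is preserved for every $\la$ in the chosen neighborhoods.
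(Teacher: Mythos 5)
Your proposal is correct and follows essentially the same route as the paper: apply Theorem~\ref{t:asym:1b}, rewrite $E_2$ via Theorem~\ref{id:det-R} using the parameterization $\phi_{k,\la}=u_{\la B_k,\tau_k}$ with $U_k$ a small neighborhood of $[0,1]$, and then combine $E_1E_3E_2$ by telescoping the $T(\phi_0)^{-1}T(\phi_0)$ and $T(\phi_0^{-1})T(\phi_0^{-1})^{-1}$ factors. One small note: the uniformity worry you raise at the end is not actually an issue, since hypothesis~(iii) of Theorem~\ref{id:det-R} requires the Hankel products to be trace class only for each fixed choice $\la_j\in U_j$, which Lemma~\ref{l.traceHH} delivers pointwise as you observe.
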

\begin{proof}
We have the same assumptions as in Theorem \ref{t:asym:1b}. 
Below we will argue that we can apply Theorem \ref{id:det-R} and conclude 
that the constant 
$$
E_2=  \det T(\tilde{\phi}) T(\tilde{\phi}_R)^{-1}\cdots T(\tilde{\phi}_0)^{-1},
$$
evaluates to 
$$
E_2=\det T(\phi_0\iv)\iv T(\phi_1\iv)\iv \cdots T(\phi_R\iv)\iv T(\phi\iv).
$$
Taking this for granted we can combine the constants $E_1, E_2$, and $E_3$ in the proper order, observe the cancellation of
$T(\phi_0)$ and $T(\phi_0\iv)$, and arrive at the expression for $E$. 

In order to see that Theorem \ref{id:det-R} can be applied we put
$$
\phi_{k,\la}=\exp(\la \sigma_k)\quad \mbox{with}\quad
\sigma_k(t)=B_k \log(-t/\tau_k)
$$
where $\sigma_k$ is a piecewise continuous matrix function, which is continuous on $\T\setminus \{\tau_k\}$ 
with normalization $\sigma_k(-\tau_k)=0$. It follows that 
$$
\phi_{k,\la}=u_{\la B_k,\tau_k}
$$
so that for $\la=1$ this function coincides with $\phi_k$, $1\le k\le R$. Using Lemma \ref{l.traceHH}
we see that the Hankel conditions in (iii) of Theorem  \ref{id:det-R} are fulfilled.

We also note that besides (i), also (ii) is satisfied, i.e., the operators
$T(\tp_{k,\la})$ are invertible for $\la \in U_k$ when we choose $U_k$ to be a sufficiently small open neighborhood
of the interval $[0,1]\subseteq \C$. Indeed, for $\la\in [0,1]$, the eigenvalues of $\la B_k$
have real parts also lying in $(-1/2,1/2)$ and therefore $T(\tp_{k,\la})=T(\wt{u_{\la B_k,\tau_k}})$
is invertible by Proposition \ref{p.pure}. Clearly, invertibility is guaranteed in a small neighborhood of $[0,1]$ as well.
Thus all assumptions in Theorem  \ref{id:det-R} are fulfilled.
\end{proof}

\bigskip

Notice that the operator determinant $E$ is well-defined even if $T(\phi_0)$ or $T(\phi_0^{-1})$ are not invertible
(however, still assuming all other assumptions in the corollary).
Indeed, it follows from the first part of Proposition \ref{p3.1} that 
\begin{align*}
T(\phi) T(\phi_R)\iv \cdots T(\phi_1)\iv
&= T(\phi_0)+\mbox{trace class},
\\
T(\phi_1\iv)\iv\cdots T(\phi_R\iv)\iv T(\phi\iv)
&= T(\phi_0\iv)+\mbox{trace class}.
\end{align*}
Hence the product is identity plus trace class since also $T(\phi_0)T(\phi_0^{-1})$, the operator featuring in $E_3$, is identity plus trace class.

It is the goal of the next section to remove the extra assumption that $T(\phi_0)$ and $T(\phi_0^{-1})$ be invertible
and replace it by a weaker condition on $\phi_0$.
Once this is accomplished our main results,  Theorem \ref{main-3} and Corollary \ref{c.main-2}, can be proved.

\section{The final derivation of the asymptotics}\label{final results}

\subsection{Some auxilliary results}

We will need an auxiliary result which provides a product representation for $\phi_0\in\CpwTG\NN$
 in terms of exponentials. To prove this result some information about the commutative Banach algebra
$\CpwTG$ is required.

\begin{lemma}\label{l5.0}
$\cB=\CpwTG$ is a commutative Banach algebra
with the following properties:
\begin{enumerate}
\item[(a)]
$\cB$ is continuously embedded and inverse closed in $C(\T)$,
\item[(b)]
$\cB$ contains all functions in $C^\infty(\T)$,
\item [(c)]
the maximal ideal space of $\cB$ is (naturally) homeomorphic to $\T$.
\end{enumerate}
\end{lemma}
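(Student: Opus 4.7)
Parts (a) and (b) are straightforward. The continuous embedding $\cB\hookrightarrow C(\T)$ is immediate from $\|a\|_\infty\le\|a\|_\cB$. For inverse closedness, if $a\in\cB$ is non-vanishing on $\T$, then $1/a$ is continuous, and on each arc $(\tau_k,\tau_{k+1})$ the identity $(1/a)'=-a'/a^2$ preserves the H\"older condition because $a'$ is H\"older of order $\eps$ and $1/a^2$ is $C^{1+\eps}$ with a uniform positive lower bound; hence $1/a\in\cB$. Part (b) is immediate since any function in $C^\infty(\T)$ has a globally Lipschitz derivative.

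For (c), I would work with the natural evaluation map
\begin{equation*}
\Phi:\T\to M(\cB),\qquad \Phi(t)=\chi_t,\quad \chi_t(a):=a(t).
\end{equation*}
It is well-defined by (a), continuous in the Gelfand topology, and injective because the coordinate function $z\in\cB$ separates points. Since $\T$ is compact and $M(\cB)$ is Hausdorff, it suffices to prove surjectivity. Given $\chi\in M(\cB)$, set $t_0:=\chi(z)$. By the inverse closedness from (a), the spectrum of $z$ in $\cB$ coincides with $\sigma_{C(\T)}(z)=\T$, so $t_0\in\T$. It then suffices to show $\chi(a)=0$ whenever $a(t_0)=0$, for then $\chi=\chi_{t_0}$. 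My plan here is a \emph{squaring trick}: show that for such $a$, the quotient $b:=a^2/(z-t_0)$ (defined with $b(t_0):=0$) belongs to $\cB$. Granting this, $\chi(a)^2=\chi(a^2)=\chi(z-t_0)\,\chi(b)=0$, so $\chi(a)=0$ and $\chi=\chi_{t_0}$. A continuous bijection from a compact to a Hausdorff space is a homeomorphism, which finishes (c).

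The main obstacle is verifying $b=a^2/(z-t_0)\in\cB$, particularly when $t_0=\tau_j\in\Gamma$. A priori $a$ can have unequal one-sided derivatives $\tilde{a}'_\pm(\theta_j)$ of $\tilde{a}(\theta):=a(e^{i\theta})$, so the naive quotient $a/(z-t_0)$ would jump across $\tau_j$ and fail to be continuous. Squaring eliminates this obstruction. Writing $\tilde{a}(\theta)=\tilde{a}'_\pm(\theta_j)(\theta-\theta_j)+O(|\theta-\theta_j|^{1+\eps})$ on each side of $\theta_j$, and $(z-t_0)(e^{i\theta})=(\theta-\theta_j)p(\theta)$ with $p\in C^\infty$ and $p(\theta_j)=ie^{i\theta_j}\ne 0$, a direct Taylor computation shows that the singular terms cancel, so $\tilde{b}'(\theta)$ extends up to $\theta_j$ from each side H\"older-continuously with one-sided limit $\tilde{a}'_\pm(\theta_j)^2/p(\theta_j)$. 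These one-sided limits may still differ across $\tau_j$, but this is permitted because $\cB=PC^{1+\eps}(\T;\Gamma)\cap C(\T)$ allows jumps of the derivative exactly at $\Gamma$; the continuity $b(\tau_j)=0$ then places $b$ in $\cB$. Elsewhere on $\T$, the regularity of $b$ reduces to standard quotient estimates, so the verification localizes to this one delicate step.
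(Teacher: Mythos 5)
Your argument for (c) is correct but takes a genuinely different route from the paper. Both proofs identify $\cM(\cB)$ with $\T$ via the evaluation map, and both reduce to surjectivity after extracting $t_0:=\chi(z)\in\T$ from a given character $\chi$ (using inverse closedness to compute $\mathrm{sp}_{\cB}(z)=\T$). The difference is how one then shows $\chi=\chi_{t_0}$.

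The paper sidesteps all quotient-regularity questions by a smooth cutoff. For $b\in\cB$ and $\varepsilon>0$ it picks $f_\varepsilon\in C^\infty(\T)$ equal to $1$ near $t_0$ and supported in a small arc, and writes
\[
b-b(t_0)=f_\varepsilon\,(b-b(t_0))+\frac{1-f_\varepsilon}{z-t_0}\,(z-t_0)\,(b-b(t_0)).
\]
The quotient $(1-f_\varepsilon)/(z-t_0)$ is $C^\infty$ because $1-f_\varepsilon$ vanishes near $t_0$, so every factor lies in $\cB$; applying $\chi$ annihilates the second summand via $\chi(z-t_0)=0$, and $\chi(f_\varepsilon(b-b(t_0)))\in\mathrm{sp}_{\cB}(\cdot)=\mathrm{im}(\cdot)$ tends to $0$ as $\varepsilon\to0$ by mere continuity of $b$. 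The only properties of $\cB$ used are (a), (b), and the embedding into $C(\T)$.

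Your squaring trick instead establishes the stronger pointwise statement $a(t_0)=0\Rightarrow\chi(a)=0$, with the burden shifted to a space-specific regularity lemma: $b:=a^2/(z-t_0)\in\cB$. That lemma is correct, and the squaring is essential, since for $a\in\cB$ with $a(t_0)=0$ the naive quotient $c:=a/(z-t_0)$ is in general only H\"older $\varepsilon$, not $C^{1+\varepsilon}$, near $t_0$ even when $t_0\notin\Gamma$. A clean way to make the cancellation you gesture at explicit is to note $b=ac$ and hence $b'=2a'c-c^2$; since $a'$ is H\"older $\varepsilon$ on each side and $c$ is too (via $\tilde a(\theta)/(\theta-\theta_j)=\int_0^1\tilde a'(\theta_j+s(\theta-\theta_j))\,ds$ and the smooth factor $p$), $b'$ extends H\"older-continuously to $t_0$ from each side, which, together with $b(t_0)=0$, puts $b$ in $\cB$. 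The same formula also gives your limit $\tilde a'_\pm(\theta_j)^2/p(\theta_j)$ at once.

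Both routes prove the lemma. The paper's is shorter and portable: it works verbatim for any Banach function algebra on $\T$ containing $C^\infty$, continuously embedded and inverse closed in $C(\T)$. Yours gives a more concrete picture of the maximal ideal at $t_0$ but requires a quotient estimate that must be re-verified for each candidate algebra; in the write-up you should state and prove that estimate explicitly rather than referring to it as a ``direct Taylor computation.''
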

\begin{proof}
Properties (a) and (b) are obvious from the definition. Hence we will focus on proving (c).
We note that (c) implies that the Gelfand transform $\cB\to C(\cM(\cB))$ amounts to the natural embedding of $\cB$ into $C(\T)$.

To prove (c), consider the map
$$
\Lambda:t_0\in \T\mapsto \Phi_{t_0}\in \cM(\cB)
$$
where $\Phi_{t_0}$ is the multiplicative linear functional defined by $\Phi_{t_0}(b)=b(t_0)$, $b\in \cB$.
This map is well-defined.

We claim that $\Lambda$ is surjective.
Indeed, let $\Phi$ be a multiplicative linear functional on $\cB$. Apply it to the function $\chi_1(t)=t$ to determine a number 
$t_0:=\Phi(\chi_1)$. By Gelfand theory, the value $t_0$ is contained in the spectrum of $\chi_1$ considered as an element in $\cB$.
Hence $t_0\in\T$. We will show that $\Phi=\Phi_{t_0}$.

Consider an arbitray $b\in \cB$. For $\eps>0$, choose a $C^\infty$-function $f_\varepsilon:\T\to [0,1]$ which is equal to one in an $\varepsilon$-neighborhood of
$t_0$ and vanishes outside a $2\varepsilon$-neighborhood of $t_0$. Write
$$
b(t)-b(t_0) = f_\varepsilon(t)(b(t)-b(t_0))+ \frac{1-f_\varepsilon(t)}{t-t_0}(t-t_0) (b(t)-b(t_0)).
$$
All functions involved in the product on the right hand side belong to $\cB$. 
In particular, $\Phi(t-t_0)=\Phi(\chi_1)-t_0 \Phi(1)=0$, and thus we get
$$
\Phi(b)-\Phi_{t_0}(b) = \Phi\Big( f_\varepsilon(b-b(t_0))\Big)\in 
\mathrm{sp}_{\cB}\Big( f_\varepsilon(b-b(t_0))\Big)=
\im \Big( f_\varepsilon(b-b(t_0))\Big).
$$
Notice that the spectrum of a function considered as an element in $\cB$ is equal  to the image of this function due to the inverse closedness
stated in part (a). 
As we can make $\varepsilon>0$ as small as desired it follows from the continuity of $b(t)$ that $\Phi(b)=\Phi_{t_0}(b)$. This holds for all $b\in \cB$, and therefore 
$\Phi=\Phi_{t_0}$ and $\Lambda$ is surjective.

The injectivity of $\Lambda$ can be seen by applying $\Phi_{t_0}=\Phi_{t_1}$ to the function $\chi_1(t)=t$ to conclude that $t_0=t_1$.

Finally, let us prove that $\Lambda$ is a homeomorphism. The standard local base for the topology of $ \cM(\cB)$ at an element $\Phi\in \cM(\cB)$ consists
of all neighborhoods
$$
U_{b_1,\dots,b_n;\varepsilon}[\Phi]=\Big\{\Psi\in \cM(\cB)\,:\, |\Psi(b_i)-\Phi(b_i)|<\varepsilon,\;\;1\le i\le n\Big\}.
$$
For $\Phi=\Phi_{t_0}=\Lambda(t_0)$, $t_0\in\T$, the pre-image equals
$$
\Lambda^{-1}(U_{b_1,\dots,b_n;\varepsilon}[\Phi])
=\Big\{t\in\T\,:\, |b_i(t)-b_i(t_0)|<\varepsilon,\;\;1\le i\le n\Big\},
$$
which contains a small $\delta$-neighborhood of $t_0\in\T$. With this, the continuity of $\Lambda$ at each $t_0\in\T$ is proved.
To see that $\Lambda$ is an open map, one could either use the particular neighborhood $U_{\chi_1;\varepsilon}[\Phi]$, or
invoke the compactness and Hausdorff properties of the underlying spaces.
Thus it is proved that $\Lambda$ is a homeomorphism.
\end{proof}

\begin{lemma}\label{l5.1}
Let $\phi_0\in \CpwTG^{N\times N}$ be an invertible function and suppose that $\wind (\det \phi_0)=0$. Then, for each $\varepsilon>0$,
the function $\phi_0$ admits a product representation
\begin{equation}\label{eq:rep}
\phi_0=e^{\eta_0}e^{\eta_1}\cdots e^{\eta_S},
\end{equation}
where $\eta_k\in \CpwTG^{N\times N}$ and $\|\eta_k\|<\eps$ for all $0\le k\le S$.
\end{lemma}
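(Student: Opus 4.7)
The plan is to construct the product representation by discretizing a continuous deformation of $\phi_0$ to the identity matrix $I_N$ inside the group of invertible elements of the Banach algebra $\cB := \CpwTG^{N\times N}$, exploiting the fact that elements sufficiently close to $I_N$ admit a logarithm of arbitrarily small norm.

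The first step is a general matrix logarithm estimate in $\cB$. Since $\CpwTG$ is a commutative Banach algebra (so $\cB$ is a Banach algebra with the natural matrix norm), for any $\varepsilon>0$ there exists $\delta>0$ such that whenever $\psi\in\cB$ satisfies $\|\psi-I_N\|<\delta$, the power series
\begin{equation*}
\log\psi := \sum_{k=1}^{\infty} \frac{(-1)^{k+1}}{k}(\psi-I_N)^k
\end{equation*}
converges in $\cB$ to an element satisfying $\exp(\log\psi)=\psi$ and $\|\log\psi\|<\varepsilon$.

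The second step is the construction of a continuous curve $[0,1]\ni\lambda\mapsto \phi_\lambda\in\cB$ with $\phi_0=I_N$, $\phi_1=\phi_0$, and each $\phi_\lambda$ invertible in $\cB$. I would handle the determinant first: since $\det\phi_0\in\CpwTG$ is non-vanishing with $\wind(\det\phi_0)=0$, the inverse-closedness of $\CpwTG$ in $C(\T)$ (Lemma~\ref{l5.0}(a)) allows a continuous branch $g=\log\det\phi_0\in\CpwTG$. Setting $\chi=e^{-(g/N)I_N}\phi_0\in\cB$ yields $\det\chi\equiv 1$. The factor $e^{(g/N)I_N}$ is a scalar-times-identity matrix and is deformed to $I_N$ via $\lambda\mapsto e^{\lambda(g/N)I_N}$. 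For the $SL$-part $\chi$, I would use the fact that $SL_N(\C)$ is connected (in fact simply connected), approximate $\chi$ by a $C^\infty$-matrix function $\chi^{(s)}$ (using Lemma~\ref{l5.0}(b)), and construct a path inside the $\cB$-invertibles by combining a pointwise smooth homotopy $\chi^{(s)}(t)\rightsquigarrow I_N$ in $SL_N(\C)$ (via local charts of the exponential map near the image of $\chi^{(s)}$) with the linear homotopy $\chi\rightsquigarrow \chi^{(s)}$.

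The third step is the telescoping argument. By uniform continuity of $\lambda\mapsto \phi_\lambda$ in the $\cB$-norm, one can choose a partition $0=\lambda_0<\lambda_1<\cdots<\lambda_S=1$ so fine that $\|\phi_{\lambda_{j-1}}^{-1}\phi_{\lambda_j}-I_N\|<\delta$ for every $j$. Setting $\eta_j=\log(\phi_{\lambda_{j-1}}^{-1}\phi_{\lambda_j})\in\cB$ with $\|\eta_j\|<\varepsilon$, the telescoping identity
\begin{equation*}
\phi_0 = \phi_1 = \prod_{j=1}^S (\phi_{\lambda_{j-1}}^{-1}\phi_{\lambda_j}) = e^{\eta_1}e^{\eta_2}\cdots e^{\eta_S}
\end{equation*}
yields the desired representation.

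The main obstacle lies in the second step: producing a homotopy that is continuous in the $\cB$-norm (not merely in $L^\infty$), which requires that the Hölder-type seminorm of the derivative of the deforming family remain controlled. Getting the $SL$-part of the deformation to respect the piecewise-Hölder regularity of $\phi_0$ while staying inside the invertibles is the delicate point; the argument is simplified by first smoothing $\chi$ and then handling the residual factor $\chi^{(s)^{-1}}\chi$, which is close to $I_N$ in $\cB$-norm when the smoothing is chosen appropriately and thus is itself a single exponential handled by step~1.
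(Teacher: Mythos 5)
Your plan is closest in spirit to the paper's \emph{second} proof, which also reduces the statement to membership in the identity component $\cG_1(\cB^{N\times N})$, $\cB=\CpwTG$; the paper, however, outsources the construction of a $\cB$-continuous path to Arens' theorem (plus the homotopy classification of maps $\T\to GL(N,\C)$), whereas you try to build the path by hand. The hand-built path runs into a genuine gap at the smoothing step. You claim that with a suitable choice of $C^\infty$ approximant $\chi^{(s)}$ the residual factor $\chi^{(s)-1}\chi$ is close to $I_N$ \emph{in the $\cB$-norm}. This is false in general: $C^\infty(\T)$ is not dense in $C^{1+\eps}(\T)$ (nor in its piecewise version), its closure being the strictly smaller ``little H\"older'' space $c^{1+\eps}$. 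Mollification therefore gives $\|\chi-\chi^{(s)}\|_{L^\infty}\to 0$ and even $C^1$-convergence, but the H\"older seminorm $[\chi'-(\chi^{(s)})']_\eps$ need not tend to zero, so $\|\chi^{(s)-1}\chi-I_N\|_{\cB}$ can stay bounded away from zero. Consequently your Step~1 (the $\log$ power series, which requires $\|\psi-I_N\|_{\cB}<\delta$) does not apply to the residual factor, and the construction stalls.

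The repair is exactly the device the paper uses in its \emph{first} proof when it extracts the factor $e^{\eta_0}$: replace the power series by the Riesz--Dunford functional calculus. Since $\cB$ is inverse-closed in $C(\T)$, one has $\mathrm{sp}_{\cB\NN}(\varrho)=\mathrm{sp}_{L^\infty\NN}(\varrho)$ for $\varrho=\chi^{(s)-1}\chi-I_N$, and this spectrum sits inside a small disk around $0$ as soon as $\|\varrho\|_{L^\infty}<1/2$, which mollification \emph{does} achieve. Then
$$
\eta=\frac{1}{2\pi i}\oint_{|z|=1/2}(zI_N-\varrho)^{-1}\log(1+z)\,dz
$$
lies in $\cB\NN$ and satisfies $e^{\eta}=\chi^{(s)-1}\chi$, regardless of whether $\|\varrho\|_{\cB}$ is small; the norm requirement $\|\eta_k\|<\eps$ on each factor is then recovered, as the paper notes, by splitting $e^\eta$ into $M$ copies of $e^{\eta/M}$. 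Two smaller points: (i) mollifying $\chi$ does not preserve $\det=1$, so you should either argue via $\pi_1(GL(N,\C))\cong\Z$ (detected by $\wind\det$, which is $0$ for $\chi^{(s)}$ by $L^\infty$-closeness) rather than simple connectedness of $SL(N,\C)$, or renormalize the determinant; (ii) the phrase ``inverse-closedness allows a continuous branch $\log\det\phi_0$'' needs a small argument (lift via $\wind=0$, then verify $g'=(\det\phi_0)'/\det\phi_0\in PC^{\eps}$ using that $\cB$ is an inverse-closed algebra). With the functional-calculus fix your route works, but it ends up being essentially a concretized version of the paper's first argument (Laurent-polynomial approximation $+$ contour-integral logarithm $+$ a factorization/deformation of the smooth remainder), with Wiener--Hopf factorization replaced by a homotopy of the smooth approximant.
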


In the last condition, the norm of  $\CpwTG^{N\times N}$ can be considered. 

Because of its importance we will give two proofs of this lemma. One is based on Wiener-Hopf factorization and is in principle constructive, while the other is based on Gelfand theory and uses Arens Theorem (a generalization of the Arens-Royden Theorem) and a homotopy argument. 
For the notion of Wiener-Hopf factorization, we refer to the monographs \cite{CG, LS}, the survey paper \cite{GKS}, and the references therein.

\begin{proof}
Let us first notice that the norm condition $\|\eta_k\|<\eps$ can be disregarded. Indeed, if this condition is not fulfilled we can replace a single exponential $e^{\eta_k}$ in the product representation by a product of $M$ exponentials
$$
e^{\eta_k}=e^{\eta_k/M}\cdots e^{\eta_k/M}
$$
and choose $M$ sufficiently large. This comes only at the expense of the number of factors involved
in the product representation.

{}From now on we will use the notation $\cB=\CpwTG$. It is easy to see that inverse closedness of $\cB$ in $C(\T)$
immediately implies the inverse closedness of $\cB\NN$ in $C(\T)\NN$. Therefore, the assumption that $\phi_0\in\cB\NN$ is invertible implies that $\phi_0^{-1}\in\cB\NN$.

{\bf First argument:}\ \ 
The continuous matrix function $\phi_0$ can be approximated as closely as desired in the $L^\infty$-norm by a
matrix Laurent polynomial, e.g., by one that is obtained from the Fourier series of $\phi_0$. Considering a sufficiently close approximation
by a matrix Laurent polynomial $b(t)$, one for which 
$$
\|\phi_0-b\|_{L^\infty}<\frac{1}{2\|\phi_0^{-1}\|_{L^\iy}},
$$
we will obtain a representation
\begin{equation}\label{eq:rep0}
\phi_0=e^{\eta_0}b.
\end{equation}
Indeed, to see this put
$$
\varrho=(b-\phi_0)\phi_0^{-1}=b \phi_0^{-1}-I_N
$$
and notice that $\varrho$ is a continuous matrix function with $\|\varrho\|_{L^\infty}<1/2$. Therefore $I_N+\varrho$ has 
a continuous logarithm and we can define
$$
\eta_0= -\log(I_N +\varrho).
$$
Combining this we obtain $e^{-\eta_0}=I_N+\rho=b\phi_0^{-1}$ and the representation \eqref{eq:rep0} follows.

{}From the definition of $\varrho$ it is immediate that $\varrho\in \cB\NN$. Because of the inverse closedness of $\cB\NN$ in $C(\T)\NN$ (and thus in $L^\iy(\T)\NN$) we see that the spectrum 
$$
\mathrm{sp}_{\cB\NN}(\varrho) = \mathrm{sp}_{L^\iy(\T)\NN}(\varrho) \subseteq \{z\in\C:\,|z|<1/2\}.
$$
Hence the above logarithm can also be expressed using Riesz functional calculus,
$$
\eta_0=-\frac{1}{2\pi i}\oint_{|z|=1/2}(zI_N-\varrho)^{-1} \log(1+z)\, dz,
$$
and this entails that $\eta_0\in \cB\NN$.

To summarize, at this point we have extracted the first factor in the desired product representation \eqref{eq:rep},
and  we are left with representing $b$ as a finite product of exponentials. 

Before we are going to do this we notice that taking the determinants in \eqref{eq:rep0} gives
$$
\det \phi_0(t)= e^{\tr\eta_0(t)}\det b(t),
$$
where the exponential is a function with winding number zero. Therefore, we conclude that the matrix Laurent polynomial $b(t)$
not only takes invertible values on $\T$ (and thus on some  open neighborhood of $\T$), but that the winding number of $\det b(t)$ is zero. 

A well-known factorization result for matrix functions (see~\cite[Thm.~5.5 and its remark]{BS06}, \cite{CG}, or \cite{LS}) implies that under these 
conditions on $b$, the function admits a Wiener-Hopf factorization 
$$
b(t)=b_-(t) d(t) b_+(t),\qquad t\in \T,
$$
not necessarily canonical, but with $d(t)=\diag[t^{\kappa_1},\dots,t^{\kappa_N}]$ 
where $\kappa_1,\dots,\kappa_N\in\Z$ are the partial indices of the factorization. 
The winding number condition implies that 
$\kappa_1+\dots+\kappa_N=0$.
The factors $b_+(t)$ and $b_-(t)$ are also matrix Laurent polynomials, which along with their inverses are analytic
 on the sets
$\{ z\in\C \,:\, |z|<1+\delta\}$ and $\{\infty\}\cup\{z\in\C\,:\, |z|>1-\delta\}$ for some sufficiently small $\delta>0$.

By continuously deforming $b_+(t)$ to $b_+(0)$ we can represent $b_+(t)$ as a finite product
involving factors which are close to the identity $I_N$ in some sense,
$$
b_+(t)=b_+(0)\prod_{j=1}^M b_+(r_{j-1} t)^{-1} b_+(r_jt),\qquad r_j=\frac{j}{M}, \qquad t\in\T,
$$
where $M$ is chosen sufficiently large. The constant invertible matrix $b_+(0)$ has a matrix logarithm and thus itself is an exponential.
The factors
$$
f_j(t)=b_+(r_{j-1}t)\iv b_+(r_jt)
$$
are analytic (and close to $I_N$) on some neighborhood of $\T$. Therefore they have an analytic matrix logarithm $\log f_j(t)$, which 
when considered as function on $\T$ belongs to $\cB\NN$. We have proved that $b_+(t)$ is the finite product of exponentials
of functions in $\cB\NN$.

The deformation argument can be applied in the similar manner to the factor $b_-(t)$ by deforming it to the constant invertible matrix $b_-(\infty)$. 
Thus also $b_-(t)$ admits a representation as a finite product of exponentials of functions in $\cB\NN$.

It remains to show that $d(t)$   also has  such a product representation.
In case all $\kappa_k$ are equal to zero nothing needs to be done since $d(t)=I_N$. 
Otherwise, using the condition  $\kappa_1+\dots+\kappa_N=0$ it follows easily by induction
that $d(t)$ can be written as a finite product of diagonal matrices,
where each of these diagonal matrices has $t$ and $t^{-1}$ precisely once as an
entry and otherwise $1$ on the remaining diagonal entries. We can formally write
$$
d(t)=d_1(t)\cdots d_M(t)
$$
with
$$
d_k(t)=P_k \left(\ba{ccc} t & 0 & 0 \\ 0 & t^{-1} & 0 \\ 0 & 0 & I_{N-2} \ea\right) P_k^{-1}
$$
and $P_k$ being permutation matrices. Focusing on the $2\times 2$ part of the term in the middle,
we decompose it into
$$
 \left(\ba{ccc} t & 0  \\ 0 & t^{-1} \ea\right)
 =
 \left(\ba{ccc} 1 & 0  \\ t^{-1} & 1 \ea\right)
 \left(\ba{ccc} 0& 1  \\ -1 & 0 \ea\right)
\left(\ba{ccc} 1 & 0  \\ t & 1 \ea\right)
 \left(\ba{ccc} 1 & -t^{-1}  \\ 0 & 1 \ea\right).
$$
Each of these factors is the exponential of either a constant matrix or a very simple matrix Laurent polynomial.
This gives rise to a corresponding product representation of each of the matrix functions
$d_k(t)$ by making an obvious extension to the full $N\times N$ matrices and taking the permutation matrices into account. 
It follows that the factor $d(t)$ also has a representation as  a product of exponentials.
This concludes our first proof.

{\bf  Second argument:}\ \
As shown in Lemma \ref{l5.0}, the maximal ideal space of the commutative Banach algebra
$\cB$ is homeomorphic to $\T$. In fact, the Gelfand transform 
$a\in\cB\mapsto \hat{a}\in C(\mathcal{M}(\cB))$ amounts to the natural embedding of $\cB$ into $C(\T)$.

Let $\cG_1(\cB\NN)$ stand for the connected component of the group of all invertible elements 
in the Banach algebra $\cB\NN$ containing the identity.
It is known \cite[Thm.~10.34]{Ru} that $\phi_0$ admits a representation \eqref{eq:rep} if and only if $\phi_0$ belongs to 
$\cG_1(\cB\NN)$.
Furthermore, using Gelfand theory,  Arens Theorem \cite{Arens,Tay}  implies that 
$\phi_0\in \cG_1 (\cB\NN)$ if and only if $\phi_0\in \cG_1 (C(\T)\NN)$. 

Since by assumption we know that $\phi_0$ is a continuous matrix function on $\T$ taking invertible values, it follows that 
the last condition $\phi_0\in \cG_1 (C(\T)\NN)$ can be rephrased  by saying that the mapping 
$$
\phi_0:\T \to GL(N;\C)
$$
is homotopic to the constant mapping (with value $I_N$). Here we use the notation $GL(N;\C)$ for the 
general linear group of order $N$ over $\C$, i.e., the group of nonsingular $N\times N$ complex matrices. 
{}From homotopy theory it is well-known that $GL(N;\C)$ is (path) connected and that its
first homotopy group $\pi_1(GL(N;\C))$ is isomorphic to $\Z$ by means of the mapping
$$
[b]_\sim \in \pi_1(GL(N;\C)) \mapsto \wind(\det b) \in \Z.
$$
Therefore, since the $\wind(\det \phi_0))=0$ is assumed, we can conclude that $\phi_0$ is homotopic to 
the constant map. In other words, $\phi_0\in \cG_1 (C(\T)\NN)$. Due to the equivalencies stated above, 
we thus have proved that $\phi_0$ admits the product representation 
\eqref{eq:rep}.
\end{proof}

\subsection{Relaxation of the invertibility assumption.}

We can now use the previous lemma to prove the following theorem, which is almost identical to Corollary~\ref{c4.9} except that the assumption about the invertibility of $T(\phi_0)$ and $T(\wt{\phi}_0)$ has been replaced by the weaker condition on winding number of $\det \phi_0$.

\begin{theorem}\label{main-1}
Assume $\phi=\phi_0\phi_1\cdots \phi_R$ where $\phi_k=u_{B_k,\tau_k}$, $1\le k\le R$, and such that the eigenvalues of $B_{k}\in\C^{N\times N}$ have real parts in the interval $I = (-1/2, 1/2).$ Moreover, suppose that $\phi_0\in \CpwTG^{N\times N}$ is invertible and 
$\wind(\det \phi_0)=0$.
Then 
\begin{align}\label{f.asym1b}
\lim_{n\to\infty} 
\frac{\det T_n(\phi)}{G[\phi_0]^n \prod_{k=1}^R \det T_n(\phi_k)}
=E
\end{align}
where
$$
E = \det T(\phi) T(\phi_R)\iv \cdots T(\phi_1)\iv 
T(\phi_1\iv)\iv \cdots T(\phi_R\iv)\iv T(\phi\iv)
$$
is a well-defined operator determinant.
\end{theorem}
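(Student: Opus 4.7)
The strategy is to reduce to Corollary \ref{c4.9} by decomposing the continuous factor $\phi_0$ into a product of many continuous factors each of which has an invertible Toeplitz operator. Lemma \ref{l5.1}, applicable since $\phi_0\in\CpwTG\NN$ is invertible with $\wind(\det\phi_0)=0$, yields a representation $\phi_0=e^{\eta_0}e^{\eta_1}\cdots e^{\eta_S}$ with $\eta_k\in\CpwTG\NN$ of arbitrarily small norm. Shrinking this norm ensures that each $e^{\eta_k}$ is uniformly close to $I_N$ on $\T$, hence sectorial, so that $T(e^{\pm\eta_k})$ and $T(\widetilde{e^{\pm\eta_k}})$ are invertible on $(\ell^2)^N$ and each sequence $T_n(e^{\eta_k})$ is stable.

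Next I would apply the localization Theorem \ref{t:asym:1} to the extended product $\phi=e^{\eta_0}\cdots e^{\eta_S}\phi_1\cdots\phi_R$. The Hankel trace-class hypotheses follow from Lemma \ref{l.traceHH}, since $\CpwTG\NN\subset PC^{1+\eps}(\T;\Gamma)\NN$ is a multiplicative algebra and all relevant partial products pair continuous or jump factors with disjoint singularities. Stability of each factor has just been arranged, using Proposition \ref{p.pure} for the $\phi_j$. Szeg\H{o}-Widom (Theorem \ref{Sz.W}) applied to each $e^{\eta_k}$ (whose determinant $e^{\tr\eta_k}$ is nonvanishing with winding number zero) yields $\det T_n(e^{\eta_k})\sim G[e^{\eta_k}]^n\det\bigl[T(e^{\eta_k})T(e^{-\eta_k})\bigr]$, while additivity of $\tfrac{1}{2\pi}\int\tr\eta_k$ gives $\prod_k G[e^{\eta_k}]=G[\phi_0]$. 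Therefore
\[
\det T_n(\phi)\sim G[\phi_0]^n\prod_{j=1}^R\det T_n(\phi_j)\cdot\prod_{k=0}^S\det\bigl[T(e^{\eta_k})T(e^{-\eta_k})\bigr]\cdot E_1^*\cdot E_2^*,
\]
with $E_1^*,E_2^*$ the operator determinants supplied by Theorem \ref{t:asym:1}.

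The remaining task is the algebraic identification of this constant with $E$. Theorem \ref{id:det-R}, applied with its ``$\phi_0$'' equal to $e^{\eta_0}$ and its parameters $\sigma_k$ chosen to produce $e^{\eta_1},\dots,e^{\eta_S}$ and the jump functions $u_{B_j,\tau_j}$ at $\lambda=1$, has all hypotheses verified exactly as in the proof of Corollary \ref{c4.9}; the identity $f=g$ rewrites $E_2^*$ in the form $\det\bigl[T(e^{-\eta_0})^{-1}\cdots T(e^{-\eta_S})^{-1}T(\phi_1^{-1})^{-1}\cdots T(\phi_R^{-1})^{-1}T(\phi^{-1})\bigr]$. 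Concatenating the three Fredholm determinants, which is permissible since each relevant partial product is $I$-plus-trace-class (by Proposition \ref{p3.1} applied to $\phi$ and, in reverse order of factors, to $\phi^{-1}$), expresses the constant as $\det[M\cdot XZY\cdot N]$, where $M=T(\phi)T(\phi_R)^{-1}\cdots T(\phi_1)^{-1}$ and $N=T(\phi_1^{-1})^{-1}\cdots T(\phi_R^{-1})^{-1}T(\phi^{-1})$ are the outer products defining $E=\det[MN]$, and $XZY$ is the composite of the inserted exponential factors.

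I expect the main obstacle to be the final identity $\det[M(XZY)N]=\det[MN]$. In the single-continuous-factor case of Corollary \ref{c4.9} one has $XZY=I$ outright; for $S\ge 1$ this only holds modulo trace class, and the proof reduces to the purely operator-theoretic claim $\det[XZY]=1$, which I would prove by induction on $S$. The inductive step extracts from the middle of the $S$-case expression the pair $T(e^{\eta_S})T(e^{-\eta_S})\cdot T(e^{-\eta_S})^{-1}T(e^{\eta_S})^{-1}=I$ and uses similarity invariance of the Fredholm determinant to conjugate these two trace-class blocks through the inner operator $Y_{S-1}$ without altering the determinant, reducing to $\det[X_{S-1}Z_{S-1}Y_{S-1}]=1$. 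Combining this identity with cyclic invariance of the Fredholm determinant and the fact that both $MN$ and $NM$ are $I$-plus-trace-class (the latter by decomposing $T(\phi^{-1})$ via Proposition \ref{p3.1} applied in reverse order, giving $NM = T(\phi_0^{-1})T(\phi_0) + \text{trace class} = I + \text{trace class}$) then yields $\det[M(XZY)N]=\det[(XZY)NM]=\det[XZY]\cdot\det[NM]=\det[MN]=E$, completing the proof.
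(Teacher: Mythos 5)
Your argument is essentially the same as the paper's: decompose $\phi_0$ via Lemma \ref{l5.1} as a product of exponentials near the identity, apply the localization result (Theorem \ref{t:asym:1}) to the refined product, invoke Szeg\H{o}--Widom on each $e^{\eta_k}$ and Theorem \ref{id:det-R} to rewrite the second operator determinant, then combine everything algebraically.

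The differences are cosmetic. You justify invertibility of $T(e^{\pm\eta_k})$ and stability of $T_n(e^{\eta_k})$ via sectoriality, whereas the paper uses the equivalent condition $\|\psi_k-I_N\|_\infty<1$. More significantly, you reorganize the final algebra around the auxiliary claim $\det(XZY)=1$ and then peel it off by cyclicity, whereas the paper writes $E_2 E_1$ as the single operator determinant $\det(QNMP)$ and cancels the $\psi_j$-factors by a direct recursion using $\det(TAT^{-1})=\det(A)$ and multiplicativity. Both routes lead to the same place and, in both, the very last step (equating $\det(NM)$ with $\det(MN)$, or $\det(M(XZY)N)$ with $\det(XZY)\det(MN)$) requires a cyclicity of the Fredholm determinant for operators $M,N$ that are merely Fredholm of index zero, not invertible. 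You state this as ``cyclic invariance,'' the paper cites $\det(TAT^{-1})=\det(A)$; strictly speaking both need a small extra argument (e.g.~perturbing $M$ by a finite-rank operator $F$ so that $M+tF$ is invertible for small $t\neq 0$, applying the identity there, and letting $t\to 0$ by continuity of the operator determinant in the trace norm). This gap is shared with the paper's own writeup and does not distinguish your proof from theirs.

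Two small cautions about the $\det(XZY)=1$ step: it does hold, but the clean way to see it is not the conjugation-through-the-middle induction you sketch. Instead, since $X=P$ and $Y=Q$ are invertible, cyclicity gives $\det(PZQ)=\det(ZQP)$; since both $Z$ and $QP$ are of the form $I$ plus trace class (each is a telescoping product of factors $T(\psi_j)T(\psi_j^{-1})$ or their inverses), multiplicativity gives $\det(Z)\det(QP)$, and the two factors are reciprocal. This is worth spelling out, as ``conjugating a pair of trace-class blocks through the inner operator'' is not an operation that preserves the determinant in general.
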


\begin{proof}
The proof goes along the lines of the proofs of Theorem \ref{t:asym:1b} and Corollary \ref{c4.9} except that we refine the given product representation
$\phi=\phi_0\phi_1\cdots \phi_R$
to
$$
\phi=\psi_0\psi_1\cdots \psi_S\phi_1\cdots \phi_R.
$$
This is justified by Lemma \ref{l5.1}, which allows us to write $\phi_0=\psi_0\psi_1\cdots\psi_S$
with $\psi_j=e^{\eta_j}\in \CpwTG\NN$ and $\eta_j\in\CpwTG\NN$ for $0\le j\le S$.
We may also assume that $\|\eta_k\|_{L^\iy}<1/2$. The latter condition implies that 
$\|\psi_k-I_N\|_{L^\infty}<1$,
and therefore the operators $T(\psi_j)$ and $T(\widetilde{\psi}_j)$ are invertible and the sequences $T_n(\psi_j)$ are stable ($0\le j\le S$).

In the proof of Theorem \ref{t:asym:1b} we have used Theorem \ref{t:asym:1}. We will also use it  here 
and apply it to the larger product. To be specific, we consider 
$$
A_n= T_n(\phi)T_n(\phi_R)^{-1}\cdots T_n(\phi_{1})^{-1}T_n(\psi_S)^{-1}\cdots T_n(\psi_1)^{-1}  T_n(\psi_0)^{-1},
$$
which leads us to 
$$
\lim_{n\to\infty}\frac{\det T_n(\phi)}{\prod_{j=0}^S G[\psi_j]^n \prod_{k=1}^R\det T_n(\phi_k)}=E
$$
with $E=E_1E_2E_3$ and the constants
\begin{align*}
E_1 &= \det T(\phi) T(\phi_R)^{-1}\cdots T(\phi_1)^{-1} T(\psi_S)^{-1}\cdots T(\psi_1)^{-1} T(\psi_0)^{-1}
\\
E_2 &= \det T(\tilde{\phi}) T(\tilde{\phi}_R)^{-1}\cdots T(\tilde{\phi}_1)^{-1}
T(\tilde{\psi}_S)^{-1}\cdots T(\tilde{\psi}_1)^{-1} T(\tilde{\psi}_0)^{-1}
\\
E_3 &= \prod_{j=0}^S\det T(\psi_j)T(\psi_j^{-1}).
\end{align*}
Here $E_3$ and the $G[\psi_j]$'s arise from the Szeg\H{o}-Widom limit theorem applied
to each of the determinants $\det T_n(\psi_j)$.

We notice that the definition \eqref{const.G} of the constants $G[\,\cdot\,]$ implies that 
$$
G[\phi_0]=G[\psi_0]G[\psi_1]\cdots G[\psi_S].
$$
Indeed, this is deduced from $\phi_0=\psi_0\psi_1\cdots \psi_S=e^{\eta_0}e^{\eta_1}\cdots e^{\eta_S}$.

The next step is to rewrite $E_2$ as
$$
E_2=\det T(\psi_0\iv)\iv T(\psi_1\iv)\iv \cdots T(\psi_S\iv)\iv T(\phi_1\iv)\iv \cdots T(\phi_R\iv)\iv T(\phi\iv).
$$
This follows directly from Theorem \ref{id:det-R}. To make the connection, we proceed similarly as in 
Corollary \ref{c4.9}, but modify the notation used in 
denoting the appropriate products to the following,
$$
\psi_0 \psi_{1,\mu_1} \psi_{2,\mu_2}\cdots \psi_{S,\mu_S}\phi_{1,\lambda_1}\dots \phi_{R,\lambda_R},
$$
where 
$$
\psi_{j,\mu}(t)=e^{\mu \eta_k(t)}\,\qquad 
\phi_{k,\lambda}(t)=e^{\lambda \sigma_k(t)}=u_{\lambda B_k,\tau_k}.
$$
In other words we introduce the parameters $\mu_1,\dots,\mu_S,\lambda_1,\dots,\lambda_R$.
Note that it is not necessary to parameterize $\psi_0$. As before, the assumption
$\|\eta_j\|_{L^\infty}<1/2$ implies that $\|\psi_{j,\mu}-I_N\|_{L^\iy}<1$ for all $\mu\in [0,1]$,
and hence for all $\mu$ in a small neighborhood $U_j$ of $[0,1]$. Therefore,
all the Toeplitz operators $T(\wt{\psi}_{j,\mu})$ are invertible whenever $\mu\in U_j$.
With this we see that Theorem \ref{id:det-R} can be applied and the above expression for $E_2$ follows.

By a simple computation it can be seen that 
$E=E_1E_2E_3$ evaluates to the constant given in the theorem. Indeed, we write $E_1 E_2$ as single operator determinant as follows
\begin{align*}
&\det \Big(
T(\psi_0\iv)\iv T(\psi_1\iv)\iv \cdots T(\psi_S\iv)\iv T(\phi_1\iv)\iv \cdots T(\phi_R\iv)\iv T(\phi\iv)
\\
&\qquad\times
T(\phi) T(\phi_R)^{-1}\cdots T(\phi_1)^{-1} T(\psi_S)^{-1}\cdots T(\psi_1)^{-1} T(\psi_0)^{-1}
\Big)
\end{align*}
and combine it recursively with the other operator determinants $\det  T(\psi_j)T(\psi_j^{-1})$ for $
0\le j\le S$. All we have to use here are the general formulas $\det T A T^{-1}=\det A$ and $\det (AB) =(\det A) (\det B)$
where $A$ and $B$ are identity plus trace class and $T$ is an invertible operator.
\end{proof}

\subsection{Proof of the main results.}

We are now going to prove our main results, Theorem \ref{main-3} and Corollary \ref{c.main-2}.

We first remark that we have already proved Theorem \ref{thm:index} (see Section \ref{index proof}),
which states  the equivalence of the assumptions (i)--(iv). Therefore, let us assume that 
$\phi\in PC^{1+\eps}(\T;\Gamma)\NN$ is $I$-regular and $\wind(\phi;I)=0$ where
$I=(-1/2,1/2)$. We now apply Proposition \ref{p.prod} and Corollary \ref{p.prod.cor} to see that $\phi$ admits a representation \eqref{f.prod1}.
That means we can write $\phi=\phi_0\phi_1\cdots \phi_R$ with $\phi_k=u_{B_k,\tau_k}$ for $1\le k\le R$, and 
$\phi_0\in \CpwTG^{N\times N}$ is an invertible function with $\wind(\det \phi_0)=0$. The real parts of the eigenvalues of $B_k$ lie in $I$.
This proves the first part of Theorem \ref{main-3}.

The conditions which we just stated are the assumptions in Theorem \ref{main-1}. Therefore, we can apply this theorem and conclude the asymptotics
\eqref{f.asym1b}. In this connection we will also use what we stated in \eqref{f4.3}--\eqref{f4.5} (which in turn followed from Proposition \ref{p.pure}).
Combining all this we arrive at the asymptotic formula \eqref{e:asymp} with the constants given by \eqref{G.con1}, \eqref{Om.con1}, and \eqref{f.constE1}.
Hence Theorem  \ref{main-3} is proved.

 Corollary \ref{c.main-2} is a direct conclusion of Theorem \ref{main-3}. In view of the product representation $\phi=\phi_0\phi_1\cdots\phi_R$
 and the definition of the function $c$ in \eqref{fct.c}, we see that $c=\det \phi_0$ and thus \eqref{G.con0} follows. 
 On the other hand, the matrices $B_k$ are similar to the matrices $L_k$ defined in \eqref{f.Lk}. This can been seen directly, but has already been noted 
 in the paragraphs following Proposition \ref{p.prod} and its proof. This similarity implies formula \eqref{Om.con0}.

Finally, the constant $E$ is nonzero if and only if the corresponding operator determinant is nonzero.
Notice that the Barnes G-functions are all nonzero since the $\beta$'s have real parts in $(-1/2,1/2)$ and thus are
not nonzero integers. Our assumptions imply that both $T(\phi)$ and $T(\phi^{-1})$
are Fredholm operators with index zero (see Theorem \ref{thm:index} and Proposition \ref{p.T-Fred}). 
The inverses of $T(\phi_k)$ and $T(\phi_k^{-1})$ exist. Note that an operator determinant is nonzero
if and only if the underlying operator has a trivial kernel and a trivial cokernel. Therefore, it is easily seen that this amounts to the invertibility of both 
$T(\phi)$ and $T(\phi^{-1})$. This concludes the proof of Corollary \ref{c.main-2}.
\hfill $\Box$

\subsection{An alternate approach using perturbation results and further open problems}\label{alt-approach}

As we have mentioned before, many of the difficulties in our work arise because the invertibility of block Toeplitz operators cannot be guaranteed. The proof of the Szeg\H{o}-Widom limit theorem given in \cite{Wi76} gets around this problem by using an elegant perturbation result \cite{Wi75a}. Indeed, if $T(\phi)$ is a  Fredholm operator with index zero one can find a matrix Laurent polynomial $q$ such that $T(\phi+\lambda q)$ is invertible whenever $0<|\lambda|<\eps$. It would be interesting to know whether a proof of our result could also be given using this type of perturbation. This is very likely possible, but notice that because it has been used in connection with localization results, it is probably not as simple as in \cite{Wi76}. In particular, it seems that one would need a result to simultaneously perturb  $T(\phi)$ and $T(\wt{\phi})$ with the same $q$ to make them both invertible. Such a result has not yet been established.

Let us mention some open questions that naturally come up. The first one is about the validity of the
`duality' formula \eqref{det-det-?}. Clearly, this formula can be proved if all `intermediate' Toeplitz operators
are invertible. It can also be proved if the various symbols can be continuously deformed to ones for which the identity holds
as long as the Toeplitz operators with the deformed symbols are invertible. However, results such as \cite[Prop.~10.5]{E03}
suggest that the deformation argument has its limitations. Perhaps a perturbation argument could work again, but it seems that 
if \eqref{det-det-?} turns out to be generally true, its proof is not simply algebraic like the one for \eqref{det-det-2}.

Another question of further interest is about the precise smoothness (on $\phi_0$) that is required to ensure that the stated Toeplitz determinant asymptotics \eqref{e:asymp}
is valid in the case of piecewise continuous symbols $\phi$. As far as we are know, this has not yet been examined even in the scalar case ($N=1$).
However, in the case of continuous selfadjoint symbols  (i.e., for the Szeg\"o-Widom theorem) results of this kind are available. We refer to  \cite[Sect.~10.8]{BS06} for more details and references.

\section{Examples}
\label{sec:Examples}

In this section we study six examples of matrix-valued discontinuous symbols using our results. We first present two examples that illustrate some of the subtleties of considering matrix-valued symbols. 

\begin{example}
Observe that the finite Toeplitz matrices with symbols of the simple form
\[ \twotwo{f}{g}{0}{h}\,\,\,\, \mbox{or} \,\,\,\, \twotwo{f}{0}{0}{h}\] 
have exactly the same determinant asymptotics. This can be easily seen by simply rearranging rows and columns. 
Thus, even if $g$ is piecewise continuous, the discontinuity will not contribute to the asymptotics. 
\end{example}

\begin{example}
For a second example, consider 

\[ \phi = \twotwo{u_{\beta,1}}{b}{c}{u_{\beta,1}} \] 

where $b, c\in\C$ and $\beta\in (-1/2,1/2)$ are constants chosen so that $\phi$ is invertible. Using our recipe for finding the appropriate $u_{B}$ in our canonical representation, we need to compute Jordan form of 
\[ (\phi(1+0))^{-1}\phi(1-0). \]

A simple computation shows that the eigenvalues of the above are given by 

\[ \frac{ 1 - bc}{a} \pm \frac{2\,i\,\sqrt{bc}\sin \beta \pi}{a} \]

with $ a = e^{-2i\beta \pi} - bc.$ Thus the ``new'' $\beta$s that determine the asymptotics are given by 
\[ \frac{ \log (\frac{ 1 - bc}{a} \pm \frac{2\,i\,\sqrt{bc}\sin \beta \pi}{a} )}{2 \pi i} .\]

The point of this example is that one cannot simply read off the asymptotics without doing the appropriate linear algebra. 
\end{example}

\begin{example}
Another example of a matrix-valued discontinuous symbol can be found in \cite{IJK, IMM}, which is related to entanglement in quantum spin chains. As in \eqref{e:entropy-symb}, consider the symbol
\begin{equation}\label{ex3-symb}
	\phi(e^{i\theta}) = \twotwo{i\lambda}{g(\theta)}{-g(\theta)^{-1}}{i\lambda}
\end{equation}
where 
\[ g(\theta) = \frac{\alpha \cos \theta -1 - i \gamma \alpha \sin \theta}{|\alpha \cos \theta -1 - i \gamma \alpha \sin \theta|} .\]
When $\alpha = 1$ this symbol has a jump at $\theta=0$. 
We will also assume that the parameter $\gamma$ is positive. If we compute the appropriate ``jump ratio matrix'' we find that it is
\[\frac{1}{ \lambda^{2} - 1} \twotwo{\lambda^{2} +1}{2\lambda}{2\lambda}{\lambda^{2} +1}.\] 
The eigenvalues of the above are 
\[ 
\frac{\lambda + 1}{\lambda - 1}  \quad\mbox{and}\quad \frac{\lambda - 1}{\lambda + 1} ,
\] 
and thus when $\lambda$ is real and  in the interval $( -1 ,1)$ the corresponding $\beta$s have real parts $\pm 1/2.$ This is a situation not covered by our theorem, although it is possible the results still hold. At $\lambda  = \pm1,$ the symbol is not invertible and thus also not covered by our theorem. For other values of $\lambda$ the asymptotics of the determinants are covered by our results. Note that $\det \phi=1-\lambda^2$ is a constant, and it is readily verified that the $I$-winding number of $\phi$ is zero. We have now obtained the following result:

\begin{theorem}
Let $\phi$ be given by~\eqref{ex3-symb} and suppose that $\lambda\notin [-1,1]$. Then
$$
	D_n[\phi] \sim (1-\lambda^{2})^{n} \,\,n^{\Omega}\,\, E
$$
where $\Omega = -2\beta^{2},$  $E$ is given in \eqref{f.constE1}, and $\beta = \frac{1}{2\pi i} \log (\frac{\lambda + 1}{\lambda - 1})$ with the appropriately chosen logarithms. 
\end{theorem}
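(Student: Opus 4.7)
The plan is to verify that the given $\phi$ satisfies the hypotheses of Corollary \ref{c.main-2} (equivalently Theorem \ref{main-3}) with $I=(-1/2,1/2)$ and $\kappa=0$, and then read off the constants $G$ and $\Omega$ from the general formulas. There is only one jump, located at $\tau_1=1$, so $\Gamma=\{1\}$ and $R=1$.

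First I would check the smoothness and locate the singularity. Since $\alpha=1$, a short computation near $\theta=0$ (using $\cos\theta-1\sim -\theta^2/2$ and $\sin\theta\sim\theta$) shows that $g$ is real-analytic on $\T\setminus\{1\}$ and has one-sided limits $g(0+)=-i$ and $g(0-)=i$, with derivative Hölder-Lipschitz on each arc avoiding $0$. Hence $\phi\in PC^{1+\eps}(\T;\{1\})^{2\times 2}$. Next I would verify $I$-regularity. One has $\det\phi=-\lambda^2-g\cdot(-g^{-1})=1-\lambda^2$, which is a nonzero constant because $\lambda\notin[-1,1]$, so $\phi(t)$ is invertible on $\T\setminus\{1\}$ and the matrices $\phi(1\pm 0)$ are invertible. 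The jump ratio matrix is the one computed in the excerpt, with eigenvalues $(\lambda+1)/(\lambda-1)$ and $(\lambda-1)/(\lambda+1)$. For $\lambda\notin[-1,1]$ neither ratio lies on $(-\infty,0]$ (since that would force $\lambda$ to be real in $(-1,1)$), so each eigenvalue admits a principal logarithm whose imaginary part lies in $(-\pi,\pi)$, and hence a matrix logarithm $L_1$ can be chosen with eigenvalues $\pm\beta$ where $\beta=\tfrac{1}{2\pi i}\log\tfrac{\lambda+1}{\lambda-1}$ satisfies $\Re\beta\in(-1/2,1/2)$. This establishes $I$-regularity.

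To check that $\kappa=0$, I would use Theorem \ref{thm:index}(iv). With $\beta_1=\tr L_1=\beta+(-\beta)=0$, the function $u_{\beta_1,1}\equiv 1$, so the auxiliary function is $c=\det\phi=1-\lambda^2$, a nonzero constant, whose winding number is $0$. Hence $T(\phi)$ is Fredholm of index zero and all hypotheses of Corollary \ref{c.main-2} are met.

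Finally, plugging into \eqref{G.con0} and \eqref{Om.con0} gives
\[
G=\exp\left(\frac{1}{2\pi}\int_0^{2\pi}\log(1-\lambda^2)\,dx\right)=1-\lambda^2,
\qquad
\Omega=-\tr(L_1^2)=-(\beta^2+(-\beta)^2)=-2\beta^2,
\]
while $E$ is the constant given by \eqref{f.constE1} applied to the product representation $\phi=\phi_0\,u_{B_1,1}$ of Proposition \ref{p.prod}. This yields the stated asymptotics. There is essentially no obstacle here beyond the routine verifications; the mild subtlety is ensuring that the principal branch choice of the logarithm for the eigenvalues $(\lambda\pm 1)/(\lambda\mp 1)$ produces $\Re\beta\in(-1/2,1/2)$, which is exactly where the hypothesis $\lambda\notin[-1,1]$ is used (and also why the critical values $\lambda=\pm 1$ and the gap $\lambda\in(-1,1)$, where $\Re\beta=\pm 1/2$, fall outside the scope of our results).
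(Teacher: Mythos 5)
Your proposal is correct and follows essentially the same route as the paper: verify the hypotheses of the general theorem (Theorem \ref{main-3}/Corollary \ref{c.main-2}), identify the single jump at $\tau_1=1$, compute the jump ratio matrix and its eigenvalues $(\lambda\pm1)/(\lambda\mp1)$, observe $\det\phi=1-\lambda^2$ is a nonzero constant so the $I$-winding number vanishes, and read off $G$, $\Omega$, and $E$ from the general formulas. You are somewhat more explicit than the paper (which simply asserts the $I$-winding number ``is readily verified'' to be zero and does not spell out the one-sided limits $g(0\pm)=\mp i$ or the check that $(\lambda\pm1)/(\lambda\mp1)\notin(-\infty,0]$ when $\lambda\notin[-1,1]$), but the underlying argument is the same; the paper additionally discusses afterward, outside the theorem statement, why $E\neq 0$, which you correctly recognize is not part of what must be proved here.
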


Since in applications, one takes the logarithm of the above expression and then integrates, it is useful to know that the constant $E$ is not zero. This is not difficult to check in this case. We need to know that the operators $T(\phi)$ and $T(\tilde{\phi})$ are both invertible. Consider first $T(\phi).$

Notice the symbol is of the form 
\[ \phi= i\lambda I + \psi  = i(\lambda I - \psi),\quad
\psi=\twotwo{0}{ig(\theta)}{-ig(\theta)^{-1}}{0}. \] 
Because $|g(\theta)|^2=1$ we can conclude that the operator $T(\psi)$ is self-adjoint, while at the same time it has norm equal to one. Therefore the spectrum of  $T(\psi)$ is contained in the interval $[-1,1]$. 
Thus for $\lambda$ not in this interval, the operator $T(\phi)$ is invertible. The same argument holds for 
$T(\tilde{\phi})$ and thus the constant $E$ does not vanish. 
\end{example}

\begin{example}
Another example where the jump discontinuities occur can be found in \cite{ AEFQ1, AEFQ2} where similar entanglement problems are also discussed. There the symbol in question is of the form 
\begin{equation}\label{e:Renyi-symb}
\phi(e^{i\theta}) = \lambda I - \frac{1}{\Lambda (\theta)} M(\theta),
\end{equation}
where 
\[M(\theta) = \twotwo{ h + 2 \cos \theta} {G(\theta)} {- G(\theta)}{ -h - 2 \cos \theta}, \,\, \Lambda(\theta) = \sqrt{(h + 2\cos \theta)^{2} + |G(\theta)|^2}\] 
\[ G( \theta) = \begin{cases}
   -i(\pi + \theta),   &   -\pi \leq \theta < -\theta_0 \\
    -i\theta,  &   -\theta_0 < \theta < \theta_0 \\
    i(\pi - \theta),   &   \theta_0 < \theta \leq \pi 
\end{cases} \]
and $h\in\R$, $h\neq \pm 2$ and $\lambda\in \Gamma$ (see~\eqref{e:Renyi}) are certain parameters. While not rigorously proving the asymptotics, the authors do compute the eigenvalues of the jump ratio matrix to find that the eigenvalues at both jumps (i.e., at $\theta=\pm\theta_0$) are given by 
\begin{equation}
\iota_\pm= \left( \frac{\sqrt{\lambda^{2} -  \cos^{2}(\Delta \xi/2)} \pm  \sin (\Delta \xi/2)}{\sqrt{ \lambda^{2} - 1}} \right)^{2}
\end{equation}
where $\Delta \xi  = \xi^{+} - \xi^{-} $ and 
\[ \cos \xi^{+} = \frac{h + 2 \cos \theta_{0}}{\sqrt{(h +2 \cos \theta_{0})^{2} + (\theta_0 - \pi)^{2}}}\] 
\[ \sin \xi^{+} =  \frac{\theta_0 - \pi}{\sqrt{(h + 2 \cos \theta_{0})^{2} + (\theta_0 - \pi)^{2}}}\] 
and
\[ \cos \xi^{-} =    \frac{h + 2 \cos \theta_{0}}{\sqrt{(h + 2 \cos \theta_{0})^{2} + \theta_0^{2}}
}\] 
\[ \sin \xi^{-} =  \frac{ \theta_0}{\sqrt{(h +2 \cos \theta_{0})^{2} + \theta_0^{2}}
}.\] 
Our results are in agreement with these. To see where our theorem applies, we note once again that the determinant of the symbol $\phi$ is constant in $\lambda$. In addition, it is also straightforward to see that the two eigenvalues $\iota_\pm$ for each point of discontinuity are algebraic inverses of each other. Thus the only difficulty can occur when the logarithms of the eigenvalues do not satisfy the appropriate condition, that is, when 
\begin{equation}\label{e:intervals}
	\lambda \notin [-1, -|\cos(\Delta \xi/2)|]\cup[|\cos(\Delta \xi/2)|,1].
\end{equation}

We have now derived the following result, which was stated in Section~IV of \cite{AEFQ2} and then used to compute the entanglement entropy for Kitaev chains with long-range pairing.

\begin{theorem}
Let $\phi$ be given by \eqref{e:Renyi-symb} and suppose that~\eqref{e:intervals} holds. Then
$$
	D_n[\phi] \sim (\lambda^2-1)^n \, n^\Omega\, E,
$$
where $\Omega=-4\beta^2$ and the logarithm $\beta=\pm \frac{1}{2\pi i}\log \iota_\pm$ is appropriately chosen.
\end{theorem}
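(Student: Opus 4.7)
This is an application of Corollary~\ref{c.main-2} (equivalently, Theorem~\ref{main-3}) with $\kappa = 0$, taking $\Gamma = \{e^{i\theta_0}, e^{-i\theta_0}\}$ and $I = (-1/2, 1/2)$. I would first verify the regularity hypothesis $\phi\in PC^{1+\varepsilon}(\T;\Gamma)^{2\times 2}$: on each of the three open arcs of $\T$ cut out by $\{\pm\theta_0\}$, the function $G(\theta)$ is affine in $\theta$ and $\Lambda(\theta)=\sqrt{(h+2\cos\theta)^2+|G(\theta)|^2}$ is smooth and strictly positive (using $h\ne\pm 2$), so every entry of $\phi$ is smooth off $\{e^{\pm i\theta_0}\}$ and possesses one-sided limits at the two jumps.

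Next, since $G(\theta)$ is purely imaginary, $G^2=-|G|^2$, so $\det M = -\Lambda^2$ and
\[
\det\phi(\theta) = \lambda^2 - \lambda\tr(M/\Lambda) + \det(M/\Lambda) = \lambda^2 - 1,
\]
a nonzero constant in $\theta$ under our hypothesis. This identifies $G=\lambda^2-1$ via formula \eqref{G.con0} (the function $c$ in \eqref{fct.c} coincides with $\det\phi$). At each $\tau_k\in\{e^{\pm i\theta_0}\}$ the jump ratio $\phi(\tau_k+0)^{-1}\phi(\tau_k-0)$ is a concrete $2\times 2$ matrix whose eigenvalues, by the linear-algebra computation of \cite{AEFQ1,AEFQ2}, are $\iota_+$ and $\iota_-$. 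Constancy of $\det\phi$ forces $\iota_+\iota_-=1$, so the eigenvalues of any matrix logarithm $L_k$ are a pair $\pm\beta$ for some $\beta$. Consequently $\tr L_k = 0$ at both jumps, and since the continuous increments of $\log\det\phi$ vanish (again by constancy of $\det\phi$), \eqref{f.wind} gives $\wind(\phi;I)=0$ as soon as $I$-regularity is known.

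The key remaining point is therefore $I$-regularity, equivalent to requiring $\iota_\pm\notin(-\infty,0]$. Writing $\iota_\pm$ as the square of $(A\pm B)/C$ with $A=\sqrt{\lambda^2-\cos^2(\Delta\xi/2)}$, $B=\sin(\Delta\xi/2)$, $C=\sqrt{\lambda^2-1}$, a short case analysis on the two branches of the complex square roots shows that $(A\pm B)/C$ is purely imaginary—making $\iota_\pm$ non-positive real—exactly when $\lambda^2\in[\cos^2(\Delta\xi/2),1]$, and avoids $(-\infty,0]$ otherwise. This is precisely the set excluded by \eqref{e:intervals}. With $I$-regularity and $\wind(\phi;I)=0$ in hand, Corollary~\ref{c.main-2} applies and yields $\Omega=-\sum_{k=1}^{2}\tr(L_k^2) = -2\cdot 2\beta^2 = -4\beta^2$ (since the eigenvalues $\pm\beta$ are the same at both jumps), together with $E$ as in \eqref{f.constE1} and the stated choice $\beta = \pm\frac{1}{2\pi i}\log\iota_\pm$. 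I expect the case analysis pinning down that $\iota_\pm\in(-\infty,0]$ occurs exactly on the excluded set to be the main (mild) obstacle, since one must track carefully which branch of each complex square root is selected, including for values of $\lambda$ lying on the entropy integration contour that are not purely real.
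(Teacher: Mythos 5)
Your proposal follows essentially the same route as the paper: apply Corollary~\ref{c.main-2} after observing that $\det\phi$ is constant (hence $G=\lambda^2-1$ and, together with $\tr L_k=0$ from $\iota_+\iota_-=1$, the $I$-winding number vanishes), take the jump-ratio eigenvalues $\iota_\pm$ from \cite{AEFQ1,AEFQ2}, and note that the hypothesis $I=(-1/2,1/2)$ on $\Re\beta$ is exactly what fails on the excluded $\lambda$-set. The paper presents this somewhat more loosely as a discussion (with, in addition, a short closing argument that $E\neq 0$ when $\lambda\notin[-1,1]$ using self-adjointness of $T(\psi)$), whereas you flesh out the smoothness check and the branch analysis explicitly; the substance is identical.
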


The operator for this theorem is of the form $\lambda I - T(\psi)$ with $T(\psi)$ self-adjoint and norm one. Hence the same argument applies as before, for $\lambda$ not in $[-1,1],$ the relevant operators are invertible and the constant $E$ is nonzero. We have not yet determined whether $E$ is nonzero in the case $\lambda$ is in the interval $$(-|\cos(\Delta \xi/2)|, |\cos(\Delta \xi/2)|).$$
\end{example}

\begin{example}
The final class of examples that we consider consists of piecewise constant matrices. We let 
\[ \psi(\theta) = \Big\{\begin{array}{cc}
I &  0 < \theta < \phi \\
M & \phi < \theta < 2 \pi
\end{array} \]
where $M$ is a constant invertible matrix. 
Suppose $M = \twotwo{a-1}{b}{c}{d-1}.$ Then  we can write our symbol as 
\[
 \twotwo{1}{0}{0}{1} + \chi_{(\phi, 2\pi)}\twotwo{a}{b}{c}{d} \]
where $\chi_{(\phi, 2\pi)}$ is the indicator function of the interval $[\phi, 2\pi].$ 
Then it is known \cite[Theorem~4.12]{LS} that the block matrix factors as
\[\frac{-1}{2b\nu} \twotwo{b}{b}{-(a-d)/2 + \nu}{-(a-d)/2 - \nu}\twotwo{\lambda_{1}}{0}{0}{\lambda_{2}}\twotwo{-(a-d)/2 - \nu}{-b}{(a-d)/2 - \nu}{b}\]

where $\nu = \sqrt{ cb + \frac{1}{4}(a-d)^{2}}$, and the functions 
\[ \lambda_{1} = 1 + ((a+d)/2 + \nu )\chi_{(\phi, 2\pi)} \]
and
\[ \lambda_{2} = 1 + ((a+d)/2 - \nu )\chi_{(\phi, 2\pi)}. \]
Thus the asymptotics reduce to the scalar Fisher-Hartwig case and are completely describable. We should note that the above holds if $b$ and $\nu$ are not zero, and if so, other factorizations are described by the referenced theorem. 
\end{example}

\medskip

\noindent{\bf Acknowledgments.} This work was supported in part by the
American Institute of Mathematics SQuaRE program ``Asymptotic behavior
of Toeplitz and Toeplitz + Hankel determinants,'' which ran from 2018 to 2022 and funded three meetings at AIM in San Jose, CA, and one meeting online. Virtanen was also supported in part by Engineering and Physical Sciences Research Council (EPSRC) grants EP/T008636/1, EP/X024555/1, and EP/Y008375/1.

\end{document}